\newcommand*{\rom}[1]{\expandafter\@slowromancap\romannumeral #1@}
\newcommand{\N}{\ensuremath{\mathbb{N}}}
\newcommand{\Nd}{\ensuremath{\mathbb{N}_0^d}}   
\newcommand{\R}{\ensuremath{\mathbb{R}}}   
\newcommand{\Rd}{\ensuremath{\mathbb{R}^d}}
\renewcommand{\H}{\ensuremath{\mathcal{H}}}
\newcommand{\bH}{\ensuremath{\bm{\mathcal{H}}}} 
\newcommand{\D}{\ensuremath{\mathcal{D}}}
\newcommand{\bD}{\ensuremath{\bm{\mathcal{D}}}}  
\newcommand{\bO}{\ensuremath{\bm{\Omega}}}
\newcommand{\W}{\ensuremath{\mathcal{W}}}
\newcommand{\K}{\ensuremath{\mathcal{K}}}
\newcommand{\cN}{\ensuremath{\mathcal{N}}}
\newcommand{\A}{\ensuremath{\mathcal{A}}}
\def \a {\alpha}
\def \b {\beta}
\def \g {\gamma}
\def \p {\partial}
\def \ba {\bm \alpha}
\def \bb {\bm \beta}
\def \bg {\bm \gamma}
\def \bd {\bm \delta}
\def \bn {\bm n}
\def \bk {\bm k}
\def \bi {\bm i}
\def \bl {\bm l}
\def \br {\bm r}
\def \bp {\bm \partial}
\def \bx {\bm x}
\title{Hermite Spectral Method with Hyperbolic Cross Approximations to High-dimensional Parabolic PDEs\thanks{Received by the editors ***; accepted for publication (in revised form) ***; published electronically ***.}}
\author{ Xue Luo\thanks{Department of Electronic Engineering, Tsinghua University,
Beijing, P. R. China, 100084 and School of Mathematics and Systems Science, Beijing University of Aeronautics and Astronautics (Beihang University), Beijing, P. R. China, 100083 ({\tt xluo6@uic.edu, luoxue0327@163.com}).}
\and Stephen S.-T. Yau\thanks{Department of Mathematical Sciences, Tsinghua
University, Beijing, P. R. China, 100084 ({\tt yau@uic.edu})}. The work of this author was supported by start-up fund from Tsinghua University.}
\begin{document}
\renewcommand{\theequation}{\arabic{section}.\arabic{equation}}

\subtitle{Dedicated to Professor Peter Caines on the occasion of his 68th birthday}

\maketitle

\newtheorem{remark}{Remark}
\renewcommand{\theremark}{\arabic{section}.\arabic{remark}}

\begin{abstract}
	It is well-known that sparse grid algorithm has been widely accepted as an efficient tool to overcome the ``curse of dimensionality" in some degree. In this note, we first give the error estimate of hyperbolic cross (HC) approximations with generalized Hermite functions. The exponential convergence in both regular and optimized hyperbolic cross approximations has been shown. Moreover, the error estimate of Hermite spectral method to high-dimensional linear parabolic PDEs with HC approximations has been investigated in the properly weighted Korobov spaces. The numerical result verifies the exponential convergence of this approach.
\end{abstract}

\begin{keywords}
	hyperbolic cross, Hermite spectral method, high-dimensional parabolic PDEs, convergence rate
\end{keywords}

\begin{AMS}
65N35, 65N22, 35K10
\end{AMS}

\pagestyle{myheadings}
\thispagestyle{plain}
\markboth{X. LUO AND S. S.-T. YAU}{HSM WITH HC APPROXIMATION TO PARABOLIC PDES}

\section{Introduction}

\setcounter{equation}{0}

Our study is motivated by solving the conditional density function of the states of certain nonlinear filtering. The conditional density function satisfies a linear parabolic PDE, which comes from the robust Duncan-Mortensen-Zakai equation after some exponential transformation, see \cite{LuoY}, \cite{YY}. We need to solve this equation in $\mathbb{R}^d$, since the states lived in the whole space, where $d$ is the number of the states. Moreover, the real-time solution is expected in the filtering problems, so it is natural to adopt the spectral methods. Among the existing literature, the Hermite and Laguerre spectral methods are the commonly used approaches based on orthogonal polynomials in infinite interval, referring to \cite{FK}, \cite{XW}. Although the Hermite spectral method (HSM) appears to be a natural choice, it is not commonly used as Chebyshev and Fourier spectral method, due to its poor resolution (see \cite{GO}) and the lack of fast algorithm for the transformation (see \cite{B2001}). However, it is shown in \cite{B1980} that an appropriately chosen scaling factor could greatly improve the resolution. Some further investigations on the scaling factor can be found in \cite{T} and also in Chapter 7, \cite{STW}. Moreover, recently a guideline of choosing the suitable scaling factors for Gaussian/super-Gaussian functions is described in \cite{LY}, as well as the application of HSM to 1-dim forward Kolmogorov equation. 

Nevertheless, the number of the states is generally greater than one. Taking the target tracking problem in 3-dim as an example, there are at least six states involved in this system (three for position, three for velocity). That is, we need to solve a linear parabolic PDE in $\mathbb{R}^6$. Naively, if we implement the spectral method with tensor product formulation and assume the first $N$ modes need to be computed in each direction, then the total amount of the computation is $N^6$. Even if with moderately small $N$, it is still not within the reasonable computing capacity. This is the so-called ``curse of dimensionality". An efficient tool to reduce this effect is the sparse grids approximations from Smolyak's algorithm \cite{S}, which is based on a hierarchy of one-dimensional quadrature. It has a potential to obtain higher rates of convergence than many existing methods, under certain regularity conditions. For example, the convergence rate of Monte Carlo simulations are $\mathcal{O}(N^{-\frac12})$ with $N$ sample points, while the sparse grids from \cite{S} achieves $\mathcal{O}(N^{-r}(\log{N})^{(d-1)(r+1)})$, under the condition that the function has bounded mix derivatives of order $r$. The studies of sparse grids start from the basis functions in the physical spaces: piecewise linear multiscale bases \cite{BG2004}, wavelets \cite{BG2004}, \cite{SS}. In the recent one decade, the hyperbolic cross (HC) approximation in the frequency space has also been investigated with various basis functions: Fourier series \cite{G}, \cite{GH}, polynomial approximations generated from the Chebyshev-Gauss-Lobatto points \cite{BNR}, Jacobi polynomials \cite{SW}. 

Although the regular hyperbolic cross (RHC) approximation \eqref{RHC} reduces the effect of the ``curse of dimensionality" in some degree, the convergence rate is still deteriorated slowly with the dimension increasing (noting the term $(\log{N})^{(d-1)(r+1)}$ in the previous paragraph). To completely break the ``curse of dimensionality",  the optimized hyperbolic cross (OHC) approximation \eqref{OHC} is introduced in \cite{GH}. It has been shown in \cite{K} that the convergence rate of the OHC approximation with $\gamma\in (0,1)$ (see definition in \eqref{OHC index set}) with Fourier series is of $\mathcal{O}(N^{-r})$ in our notation, where the dimension enters the constant in front. The first purpose of this paper is to establish the error estimate for the HC approximations with the generalized Hermite functions in the weighted Korobov spaces $\K_{\ba,\bb}^m(\mathbb{R}^d)$, see \eqref{koborov space}. In particular, we obtain the following results for the RHC/OHC approximation with the generalized Hermite functions. 
\begin{theorem}\label{thm-1}
For any $u\in\K_{\ba,\bb}^m(\Rd)$, $0\leq l<m$, (and $0<\g\leq\frac lm$,)
\begin{align*}
	\inf_{U_N\in X_N (or\ X_{N,\gamma})}\left|\left|u-U_N\right|\right|_{\K^l_{\ba,\bb}(\Rd)(or\ \W_{\ba,\bb}^l(\Rd))}\leq 
C N^{\frac{l-m}2}|u|_{\K^m_{\ba,\bb}(\Rd)},\quad\forall\, 0\leq l< m,
\end{align*}
where $C$ is some constant depending on $\ba$, $l$, $m$ and $d$ (or $\gamma$), $X_N$ (or $X_{N,\gamma}$) is defined in \eqref{RHC} (or \eqref{OHC}), $\W_{\ba,\bb}^l(\Rd)$ and $\K^l_{\ba,\bb}(\Rd)$ are the Sobolev-type spaces \eqref{Sobolev-type space} and the weighted Korobov spaces \eqref{koborov space}, respectively.
\end{theorem}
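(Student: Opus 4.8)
The plan is to exploit the fact that, in the generalized Hermite basis $\{\H_{\bk}\}_{\bk\in\Nd}$ (the tensor products of the one-dimensional generalized Hermite functions), both the weighted Korobov norm and the Sobolev-type norm are \emph{diagonal}. Writing $u=\sum_{\bk}\hat u_{\bk}\H_{\bk}$, one has $\|u\|_{\K^s_{\ba,\bb}(\Rd)}^2=\sum_{\bk}w^{\K}_s(\bk)\,|\hat u_{\bk}|^2$ and $\|u\|_{\W^s_{\ba,\bb}(\Rd)}^2=\sum_{\bk}w^{\W}_s(\bk)\,|\hat u_{\bk}|^2$, where the Korobov weight $w^{\K}_s(\bk)$ is of \emph{product} type in the one-dimensional Hermite eigenvalues $\mu(k_i)$ and the Sobolev weight $w^{\W}_s(\bk)$ is of \emph{isotropic} (sum) type. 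Since $X_N$ (resp. $X_{N,\gamma}$) is precisely the span of the $\H_{\bk}$ over the index set $\Lambda$ defining \eqref{RHC} (resp. \eqref{OHC}), I would first show that the orthogonal truncation $P_N u=\sum_{\bk\in\Lambda}\hat u_{\bk}\H_{\bk}$ realizes the infimum: because each norm decouples across modes, for any competitor the error splits as a sum over $\bk\in\Lambda$ plus a sum over $\bk\notin\Lambda$, the first part is minimized mode-by-mode by matching coefficients, and the best-approximation error equals the tail $\sum_{\bk\notin\Lambda}w^{\bullet}_l(\bk)\,|\hat u_{\bk}|^2$.

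The second step is the standard weight-ratio factorization, which reduces the entire estimate to a pointwise supremum over the complement of the index set:
\[
\sum_{\bk\notin\Lambda}w^{\bullet}_l(\bk)\,|\hat u_{\bk}|^2
=\sum_{\bk\notin\Lambda}\frac{w^{\bullet}_l(\bk)}{w^{\K}_m(\bk)}\,w^{\K}_m(\bk)\,|\hat u_{\bk}|^2
\le\Big(\sup_{\bk\notin\Lambda}\frac{w^{\bullet}_l(\bk)}{w^{\K}_m(\bk)}\Big)\,|u|_{\K^m_{\ba,\bb}(\Rd)}^2 .
\]
Here $w^{\K}_m$ denotes the top-order $\K^m$-seminorm weight, which is strictly positive since the generalized Hermite eigenvalues satisfy $\mu(k)>0$. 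Taking square roots, it then suffices to prove that the displayed supremum is $\mathcal{O}(N^{\,l-m})$, which yields exactly the factor $N^{(l-m)/2}$.

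For the RHC/Korobov-to-Korobov estimate the supremum is elementary. Both weights are products of the one-dimensional eigenvalues, so the ratio collapses to a power of the single scalar $\prod_{i}\mu(k_i)$ that defines the regular cross, namely $\big(\prod_{i}\mu(k_i)\big)^{l-m}$. On the complement of $X_N$ this product exceeds the RHC threshold, and since $l-m<0$ the ratio is bounded by that threshold raised to the power $l-m$, i.e. $\mathcal{O}(N^{\,l-m})$; the dimension $d$ enters only through the normalization of the threshold and is absorbed into the constant $C$.

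The genuinely delicate case, and the main obstacle, is the OHC/Korobov-to-Sobolev estimate, where the numerator $w^{\W}_l$ is isotropic while the denominator $w^{\K}_m$ is a product, so the ratio no longer reduces to a function of a single scalar. Bounding $\sup_{\bk\notin X_{N,\gamma}} w^{\W}_l(\bk)/w^{\K}_m(\bk)$ becomes a constrained discrete optimization: maximize an isotropic-over-product ratio subject to $\bk$ lying outside the $\gamma$-optimized cross \eqref{OHC index set}. I expect to handle this by relaxing to the continuous variables $t_i=\mu(k_i)\ge 1$ and passing to logarithms, so that both the objective and the OHC constraint become affine in $\log t_i$; the problem is then a linear program whose extremum is located by a Lagrangian/boundary analysis. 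The restriction $0<\g\le \tfrac{l}{m}$ is precisely the compatibility condition under which the maximizer is consistent with the constraint and delivers the rate $N^{\,l-m}$ with a constant that degrades only mildly (if at all) in $d$; verifying this compatibility and confirming that the extremal configuration sits on the boundary of the OHC region rather than in its interior is where the real work lies. Combining the resulting supremum bound with the factorization of the second step and taking square roots then yields the claimed estimate uniformly in both the RHC and OHC settings.
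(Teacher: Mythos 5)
Your overall skeleton (orthogonal truncation attains the infimum in these diagonal norms, then a weight-ratio bound taken as a supremum over the complement of the index set) is the same as the paper's, but the step on which everything hinges is wrong as stated, and it is exactly the point the paper flags as the gap in Theorem 2.3 of \cite{SW}. You claim the top-order Korobov weight is ``strictly positive since the generalized Hermite eigenvalues satisfy $\mu(k)>0$.'' This is false in the present setting: $\lambda_n=2\a^2n$ vanishes at $n=0$, so $\mu_{n,k}=\prod_{j=0}^{k-1}\lambda_{n-j}=0$ whenever $k>n$. Consequently the product-type weight $\bm{\mu}_{\bn,m\bm{1}}$ --- the only denominator for which your ``ratio collapses to a power of the single scalar $\prod_i\mu(k_i)$'' argument works --- vanishes at every complement index having some component $n_j<m$, and such indices exist in abundance: for $d=2$ and $m\geq1$, $\bn=(0,N+1)$ satisfies $|\bn|_{\textup{mix}}=N+1>N$ yet $\bm{\mu}_{\bn,m\bm{1}}=0$, so your supremum is $+\infty$ there (the numerator at $\bl=\bm{0}$ equals $1$). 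If instead you interpret the denominator as the genuine seminorm mode weight $\sum_{|\bk|_\infty=m}\bm{\mu}_{\bn,\bk}$, positivity on the complement does hold, but only because every complement index then has a component exceeding $m$, which requires $N>m^d$ (this is precisely why Theorem \ref{regular HC} assumes it); and that weight is no longer of product type, so your ``elementary'' RHC collapse no longer applies to it.

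The paper's fix, which your proposal is missing, is a two-branch decomposition of the complement: on $\bO_{N,m}^c$ (all components $\geq m$) the ratio against $\bm{\mu}_{\bn,m\bm{1}}$ behaves as you describe, while on $\bO_{N,\bl}^c\setminus\bO_{N,m}^c$ one partitions the coordinates into $\cN=\{j:\ n_j<m\}$ and $\cN^c$, replaces the denominator by $\tilde{\bm{\mu}}_{\bn,\bl,m}=\bm{\mu}_{\bn,\bk}$ with $k_j=l_j$ on $\cN$ and $k_j=m$ on $\cN^c$ (still $|\bk|_\infty=m$, hence still dominated by $|u|_{\K^m_{\ba,\bb}(\Rd)}^2$), and exploits $\prod_{j\in\cN^c}\bar n_j>N/m^{d-1}$ to recover the rate $N^{|\bl|_\infty-m}$. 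Without this second branch your RHC case is incomplete, and the same defect propagates to the OHC case, which you in any event only sketch: the log-variable linear program is never carried out, the relaxation $t_i=\mu(k_i)\geq1$ is already inconsistent with $\mu(0)=0$ and with the $\bar k_j$ convention inside $|\cdot|_{\textup{mix}}$, and the paper's Theorem \ref{thm-OHC} shows the actual extremal analysis needs the analogous $\cN/\cN^c$ splitting (with $|\tilde\bn|_\infty$, $|\tilde\bn|_{\textup{mix}}$ and the lower bound \eqref{thm2.3_eq11}), producing the two regimes of which your stated range $0<\g\leq l/m$ is one.
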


We follow the error analysis developed in \cite{SW} to show Theorem \ref{thm-1}. But it is necessary to point out that there is a gap in the proof of Theorem 2.3, \cite{SW}. We circumvent this by more delicate analysis. 

We are also interested in the dimensional adaptive HC approximation. 
The following error estimate is obtained with respect to the dependence of dimensions.
\begin{theorem}[{\scshape Theorem} \ref{B.1}]
	For any $u\in\K_{\ba,\bb}^m(\Rd)$, for $0<l\leq m$, we have
	\begin{align*}
		\inf_{U_N\in X_N}\left|u-U_N\right|_{W^l_{\ba,\bb}(\Rd)}\lesssim|\ba|_\infty^{l-m}\left(N_1^{l-m}+N_2^{\frac{1-\g}{d-d_1-\g}(l-m)}\right)^{\frac12}|u|_{\K_{\ba,\bb}^m(\Rd)},
	\end{align*}
	where $X_N$ is defined in \eqref{dim adaptive HC}, $\g$ is in the definition of OHC \eqref{OHC index set}, and $N_1$, $d_1$, $N_2$ are clarified in \eqref{dim adaptive indices}.
\end{theorem}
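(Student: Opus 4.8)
The plan is to run the same coefficient-truncation argument that underlies Theorem~\ref{thm-1}, while tracking the dimensional dependence through the geometry of the dimension-adaptive index set. First I would expand $u$ in the generalized Hermite basis, $u=\sum_{\bk\in\Nd}\hat u_{\bk}\hat H_{\bk}$, and note that because $X_N$ in \eqref{dim adaptive HC} is spanned by the Hermite functions indexed by a set $\Lambda_N$, the infimum is realized by the orthogonal truncation $U_N=\sum_{\bk\in\Lambda_N}\hat u_{\bk}\hat H_{\bk}$. The error is then the tail $u-U_N=\sum_{\bk\notin\Lambda_N}\hat u_{\bk}\hat H_{\bk}$, and the whole estimate reduces to bounding its $W^l_{\ba,\bb}$ seminorm by the coefficients living outside $\Lambda_N$.

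Next I would convert both seminorms into weighted $\ell^2$ sums of the coefficients $\hat u_{\bk}$, using the one-dimensional differentiation and recurrence identities for generalized Hermite functions that already drive Theorem~\ref{thm-1}. The Sobolev-type seminorm produces an isotropic weight $w_l(\bk)$ of total order $l$, while the Korobov seminorm produces a multiplicative (dominating-mixed) weight $w_m(\bk)$ of order $m$ per coordinate. Factoring out the Korobov weight gives
\[
|u-U_N|_{W^l_{\ba,\bb}}^2\;\le\;\Big(\sup_{\bk\notin\Lambda_N}\frac{w_l(\bk)}{w_m(\bk)}\Big)\,|u|_{\K^m_{\ba,\bb}}^2 ,
\]
so everything reduces to one pointwise supremum over the complement. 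The scaling factors are bookkept at this stage: the powers of $\a_j$ generated by the differentiation identities are uniformly controlled by $|\ba|_\infty$, contributing $|\ba|_\infty^{2(l-m)}$ at the level of the displayed squared estimate and hence the prefactor $|\ba|_\infty^{l-m}$ after taking square roots.

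I would then split the complement according to the two groups of coordinates defining the dimension-adaptive set \eqref{dim adaptive indices}: the first $d_1$ coordinates carry the budget $N_1$, and the remaining $d-d_1$ coordinates carry the optimized-hyperbolic-cross budget $N_2$ with parameter $\g$ from \eqref{OHC index set}. Lying outside $\Lambda_N$ forces at least one of the two sub-budgets to be violated, so the supremum breaks into two constrained maximizations, one over each group, and the squared bound is the sum of the two contributions (hence the two summands inside the parentheses, recombined after the square root). Introducing $y_j\sim\a_j^2 k_j$, each maximization has the schematic form ``maximize an isotropic order-$l$ numerator over a multiplicative order-$m$ denominator subject to that group's budget constraint.'' The first group is the ordinary hyperbolic-cross problem and returns the factor $N_1^{l-m}$, exactly as in Theorem~\ref{thm-1}; the second, optimized group returns $N_2^{\frac{1-\g}{d-d_1-\g}(l-m)}$, the characteristic exponent $\tfrac{1-\g}{d-d_1-\g}$ emerging from a Lagrange-multiplier analysis of the relaxed optimization on the $(d-d_1)$ active coordinates.

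The main obstacle is precisely this optimized maximization. Its continuous relaxation is extremal only in a limiting configuration (some coordinates collapsing onto the boundary of the feasible region), which is exactly where the gap in \cite[Theorem~2.3]{SW} occurs; making it rigorous requires a careful case analysis of which coordinates are ``active'' at the optimum, control of the boundary contributions where some $k_j$ vanish, and a check that the coupling between the two coordinate groups does not produce a worse configuration than either group alone. Confirming that the discrete supremum over $\Nd\setminus\Lambda_N$ is genuinely governed by the continuous optimum, and not by a boundary lattice point with a different exponent, is the delicate step that must be verified before the clean exponent $\tfrac{1-\g}{d-d_1-\g}$ can be asserted.
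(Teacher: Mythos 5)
Your setup---orthogonal truncation, conversion of both seminorms into weighted coefficient sums, splitting the complement according to which of the two budgets is violated, and bounding the squared error by a supremum of weight ratios---matches the skeleton of the paper's argument, and the two factors you predict are the right ones. But your proposal has a genuine gap exactly where you yourself flag it: the constrained maximization over the OHC-violating group is never carried out. You describe the Lagrange-multiplier analysis, the active-coordinate/boundary case analysis, and the discrete-versus-continuous comparison as ``the delicate step that must be verified,'' and then defer it. Since that step is the entire content of the $N_2^{\frac{1-\g}{d-d_1-\g}(l-m)}$ term, the proof is incomplete as written.

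Moreover, the difficulty you anticipate is largely an artifact of importing the machinery of Theorem \ref{thm-OHC} (mixed-derivative error, where the delicate analysis with $\tilde{\bm{\mu}}_{\bn,\bl,m}$ is genuinely needed). In Theorem \ref{B.1} the error is measured only in the Sobolev-type seminorm \eqref{sobolev seminorm}, whose frequency weights $\mu_{n_j,l}$ involve a single coordinate at a time, and this makes the paper's proof elementary. The paper needs only two ingredients: (i) the one-line implication (cf.\ \eqref{thm2.4_eq1}, the analogue of \eqref{thm2.3_eq7}) that violating the OHC budget forces one coordinate of $\bn_2$ to be large,
\begin{align*}
	|\bn_2|_{\textup{mix}}|\bn_2|_\infty^{-\g}>N_2^{1-\g}\ \Rightarrow\ |\bn_2|_\infty>N_2^{\frac{1-\g}{d-d_1-\g}},
\end{align*}
which follows from $|\bn_2|_{\textup{mix}}\leq|\bn_2|_\infty^{d-d_1}$; and (ii) the single-coordinate ratio comparisons $\mu_{n_j,l}/\mu_{n_j,m}$ and $\mu_{n_j,l}/\mu_{n_{j_0},m}$ from the full-grid proof (Theorem \ref{projection on full grid}, estimates \eqref{thm2.1_I1}--\eqref{thm2.1_I2}), applied with $n_{j_0}$ the coordinate exceeding the relevant threshold, namely $N_1$ in the first group and $N_2^{\frac{1-\g}{d-d_1-\g}}$ in the second. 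No continuous relaxation, no Lagrange multipliers, and no boundary-configuration analysis is needed; the exponent $\frac{1-\g}{d-d_1-\g}$ is produced by (i) alone. A minor point in the same direction: you call the first group ``the ordinary hyperbolic-cross problem,'' but the constraint $|\bn_1|_\infty\leq N_1$ is a full tensor grid in those coordinates, and the factor $N_1^{l-m}$ comes from the full-grid argument of Theorem \ref{projection on full grid}, not from Theorem \ref{thm-1}.
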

 
To avoid the distraction of our main results, we leave the detailed proof of this theorem in Appendix B.

The second purpose of this paper is to study the application of the Galerkin-type HSM with the HC approximation to high-dimensional linear parabolic PDEs. The error estimates in appropriate weighted Korobov spaces are investigated under various conditions (cf. conditions $\bm{(C_1)}$-$\bm{(C_6)}$ in section 3). There also exist rich literatures of  the applications of sparse grids algorithm to solve equations. It has already been successfully applied to problems from the integral equations \cite{GOS}, to interpolation and approximation \cite{KW}, to the stochastic differential equations \cite{ST}, \cite{NTW}, to high dimensional integration problems from physics and finance \cite{GG}, and to the solutions to elliptic PDEs, \cite{Z}, \cite{SY}. As to the parabolic PDEs, they are treated with a wavelet-based sparse grid discretization in \cite{PS}. Besides the finite element approaches, they are also handled with finite differences on sparse grids \cite{Gr1998} and finite volumn schemes \cite{H}. Griebel and Oeltz \cite{GOe} proposed a space-time sparse grid technique, where the tensor product of one-dimensional multilevel basis in time and a proper multilevel basis in space have been employed. To our best knowledge, it is the first time in this paper that the Galerkin HSM with sparse grids algorithm is applied to parabolic PDEs, and the error estimates are obtained in the appropriate spaces.
\begin{theorem}
	Assume that conditions $\bm{(C_1)}$-$\bm{(C_3)}$ are satisfied, and the solution to the equation \eqref{PDE} $u\in L^\infty(0,T;\K_{\ba,\bb}^m(\Rd))\cap L^2(0,T;\K_{\ba,\bb}^m(\Rd))$, for $m>1$. Let $u_N$ be the approximate solution obtained by HSM \eqref{HSM}, then
	\begin{align*}
		||u-u_N||(t)\lesssim c^*N^{\frac{1-m}2},
	\end{align*}
where $c^*$ depends on $\ba$, the norms of $L^2(0,T;\K_{\ba,\bb}^m(\Rd))$ and  $L^\infty(0,T;\K_{\ba,\bb}^m(\Rd))$.
\end{theorem}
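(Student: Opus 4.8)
The plan is to run the classical energy argument for the Galerkin approximation of parabolic equations, but carried out entirely in the generalized Hermite basis so that the regularity hypothesis $u\in L^\infty(0,T;\K_{\ba,\bb}^m(\Rd))\cap L^2(0,T;\K_{\ba,\bb}^m(\Rd))$ suffices and no bound on $\p_t u$ is required. Denote by $(\cdot,\cdot)$ the weighted $L^2$ inner product and by $a(\cdot,\cdot)$ the bilinear form attached to the spatial operator in \eqref{PDE}, so that the exact solution obeys $(\p_t u, v)+a(u,v)=(f,v)$ for all admissible $v$, while the HSM solution $u_N\in X_N$ obeys the same identity for every $v\in X_N$; subtracting yields the Galerkin orthogonality $(\p_t(u-u_N),v)+a(u-u_N,v)=0$ for all $v\in X_N$. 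Conditions $\bm{(C_1)}$--$\bm{(C_3)}$ are precisely the hypotheses that make $a$ continuous on $\W_{\ba,\bb}^1(\Rd)$ and supply a G{\aa}rding inequality $a(v,v)\geq \delta\,|v|_{\W_{\ba,\bb}^1(\Rd)}^2-\lambda\,\|v\|^2$; these two facts drive the whole estimate.

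First I would split the error using the truncation projection $P_N$ onto $X_N$, i.e. the operator that discards every generalized Hermite coefficient whose multi-index lies outside the hyperbolic-cross set defining $X_N$. The key structural remark is that the $L^2$, $\K_{\ba,\bb}^l$ and $\W_{\ba,\bb}^l$ norms are all diagonal in the generalized Hermite basis, so $P_N$ is simultaneously the orthogonal projection in each of them; consequently $P_N$ realizes the infimum in Theorem \ref{thm-1}, and $\rho:=u-P_N u$ satisfies $|\rho|_{\W_{\ba,\bb}^1(\Rd)}\lesssim N^{(1-m)/2}\,|u|_{\K_{\ba,\bb}^m(\Rd)}$ (the case $l=1$) together with the analogous $L^2$ bound. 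Writing $u-u_N=\rho+\theta$ with $\theta:=P_N u-u_N\in X_N$, orthogonality of $P_N$ in $L^2$ gives $(\rho,w)=0$ for every $w\in X_N$, and since $X_N$ is time-independent both $\theta$ and $\p_t\theta$ lie in $X_N$; differentiating $(\rho,\theta)\equiv 0$ therefore shows $(\p_t\rho,\theta)=-(\rho,\p_t\theta)=0$. This is exactly the step that removes any need for a bound on $\p_t u$ and matches the stated hypotheses.

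Next I would derive the energy estimate for $\theta$. Testing the orthogonality relation with $v=\theta$ and using $(\p_t\rho,\theta)=0$ reduces it to $\tfrac12\tfrac{d}{dt}\|\theta\|^2+a(\theta,\theta)=-a(\rho,\theta)$. Bounding the left-hand side below by the G{\aa}rding inequality and the right-hand side above by continuity of $a$ followed by Young's inequality, I absorb the resulting $|\theta|_{\W_{\ba,\bb}^1}^2$ into the coercive term and obtain $\tfrac{d}{dt}\|\theta\|^2+\delta\,|\theta|_{\W_{\ba,\bb}^1}^2\lesssim \lambda\,\|\theta\|^2+|\rho|_{\W_{\ba,\bb}^1}^2$. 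Integrating in time and applying Gronwall's lemma, while taking $u_N(0)=P_N u(0)$ so that $\theta(0)=0$, controls $\sup_t\|\theta(t)\|^2+\int_0^t|\theta|_{\W_{\ba,\bb}^1}^2$ by $\int_0^T|\rho|_{\W_{\ba,\bb}^1}^2\lesssim N^{1-m}\|u\|_{L^2(0,T;\K_{\ba,\bb}^m(\Rd))}^2$. The triangle inequality $\|u-u_N\|(t)\leq\|\rho(t)\|+\|\theta(t)\|$, with $\|\rho(t)\|$ estimated pointwise in $t$ via the $L^\infty(0,T;\K_{\ba,\bb}^m)$ regularity and $\|\theta(t)\|$ from Gronwall, then produces the claimed rate $c^*N^{(1-m)/2}$, the constant $c^*$ collecting $\delta$, $\lambda$, the continuity constant of $a$, and the two space-time norms of $u$.

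The step I expect to be the main obstacle is not the energy recursion, which is routine once set up, but the verification that conditions $\bm{(C_1)}$--$\bm{(C_3)}$ genuinely deliver both the continuity of $a$ on $\W_{\ba,\bb}^1(\Rd)$ and the G{\aa}rding coercivity in the weighted energy norm. After the exponential transformation the operator in \eqref{PDE} typically carries coefficients that grow with $|\bx|$, so boundedness of $a(\rho,\theta)$ is a weighted estimate rather than a classical one, and coercivity must be extracted from the interplay between the diffusion, the drift and the potential term under the Gaussian weight. Establishing these two inequalities in the Korobov/Hermite scale, and checking that the standard shift $u\mapsto e^{-\lambda t}u$ upgrades the G{\aa}rding bound to a form usable in Gronwall, is where the real work, and the role of $\bm{(C_1)}$--$\bm{(C_3)}$, lies.
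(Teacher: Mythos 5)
Your skeleton coincides with the paper's proof: the splitting $u-u_N=\rho+\theta$ with $\rho=u-P_N^{\ba,\bb}u$, the identity $\langle\p_t\rho,\theta\rangle=0$ coming from the time-independence of $X_N$, testing with $\theta$, Young's inequality, integration in time, and the triangle inequality at the end are exactly the steps in the paper (there $U_N=P_N^{\ba,\bb}u$ and $\theta=U_N-u_N$). The genuine gap is precisely the step you defer to as ``the real work'': you propose to verify that $\bm{(C_1)}$--$\bm{(C_3)}$ yield continuity of $\A$ on $\W_{\ba,\bb}^1(\Rd)$ together with a G{\aa}rding inequality $\A(v,v)\geq\delta\,|v|_{\W_{\ba,\bb}^1(\Rd)}^2-\lambda\|v\|^2$. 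The G{\aa}rding part is not merely unproved: it is inconsistent with the hypotheses. Since $\D_{x_j}=\p_{x_j}+\a_j^2(x_j-\b_j)$, the seminorm $|v|_{\W_{\ba,\bb}^1(\Rd)}^2$ contains the weighted terms $\a_j^4\|(x_j-\b_j)v\|^2$; taking $v_R(\bx)=v_0(\bx-R\bm{e}_1)$ for a fixed bump $v_0$ sends these terms to infinity as $R\to\infty$, while $\bm{(C_1)}$ forces $\A(v_R,v_R)\leq C\|v_R\|_{H_0^1(\Rd)}^2=C\|v_0\|_{H_0^1(\Rd)}^2$ to remain bounded. So no constants $\delta,\lambda$ can exist, and a proof organized around that inequality cannot be completed.

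The repair is simple and is what the paper does: conditions $\bm{(C_1)}$ and $\bm{(C_2)}$ are \emph{assumptions} posed in the unweighted space $H_0^1(\Rd)$, so there is nothing to verify; one runs the whole energy estimate there,
\begin{align*}
	\p_t\|\theta\|^2\leq 2C\|\rho\|_{H_0^1(\Rd)}\|\theta\|_{H_0^1(\Rd)}-2c\|\theta\|_{H_0^1(\Rd)}^2\lesssim\|\rho\|_{H_0^1(\Rd)}^2,
\end{align*}
where full coercivity even makes Gronwall unnecessary (your Gronwall variant would be fine if the G{\aa}rding inequality were posed in $H_0^1(\Rd)$, which is a weaker and consistent hypothesis). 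The weighted scale enters only through the one-sided comparison $\|v\|_{H^1(\Rd)}\lesssim\|v\|_{\W_{\ba,\bb}^1(\Rd)}$ (Corollary \ref{coro}, via Lemma \ref{lemma}), applied to $\rho$ alone, after which Corollary \ref{coro2.1} or \ref{coro2.2} with $l=1$ gives $\|\rho\|_{\W_{\ba,\bb}^1(\Rd)}\lesssim N^{\frac{1-m}2}|u|_{\K_{\ba,\bb}^m(\Rd)}$; note this comparison only goes one way, which is exactly why it can be used on $\rho$ but cannot be reversed to produce weighted coercivity for $\theta$. With that substitution the rest of your outline goes through verbatim and reproduces the paper's argument. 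Your closing concern about coefficients growing in $|\bx|$ is relevant to the paper's second theorem, where $\bm{(C_1)}$--$\bm{(C_2)}$ are replaced by the growth conditions $\bm{(C_4)}$--$\bm{(C_6)}$ and the weighted estimates of Lemma \ref{lemma} really are needed, not to the present statement.
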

\begin{theorem}
	Assume that conditions $\bm{(C_3)}$-$\bm{(C_6)}$ are satisfied and the solution to the equation \eqref{PDE} $u\in L^2(0,T;\K_{\ba,\bb}^m(\Rd))$, for some integer $m>\max\{|\bg|_1,|\bd|_1+1\}$ ($\bg$, $\bd$ are two parameters in condition $\bm{(C_6)}$), and $u_N$ is the approximate solution obtained by HSM \eqref{HSM}, then 
\begin{align*}
	||u-u_N||(t)\lesssim c^{\sharp}N^{\frac{\max\{|\bg|_1,|\bd|_1+1\}-m}2},
\end{align*}
where $c^{\sharp}$ depends on $\ba$, $T$ and the norm of $L^2(0,T;\K_{\ba,\bb}^m(\Rd))$.
\end{theorem}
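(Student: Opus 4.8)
The plan is to run the standard energy argument for Galerkin schemes, splitting the error through the hyperbolic cross projection and then absorbing the polynomially growing coefficients by exploiting that, for the generalized Hermite functions, both differentiation $\bp$ and multiplication by a monomial $\bx^{\bk}$ lose precisely $|\bk|_1$ orders in the weighted Korobov scale $\{\K^m_{\ba,\bb}(\Rd)\}$. First I would write $u-u_N=\eta+\xi$, where $\eta:=u-\Pi_N u$ is the projection error of $u$ onto the approximation space $X_N$ and $\xi:=\Pi_N u-u_N\in X_N$ is the discrete part. Since $\Pi_N$ is the orthogonal projection in $\K^0_{\ba,\bb}(\Rd)$, we have $(\eta,v_N)=0$ for every $v_N\in X_N$ and every $t$, so differentiating in $t$ gives $(\p_t\eta,v_N)=0$ as well. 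Subtracting the scheme \eqref{HSM} from the weak form of \eqref{PDE} tested against $v_N$ therefore yields the error equation $(\p_t\xi,v_N)+a(\xi,v_N)=-a(\eta,v_N)$, the bilinear form $a(\cdot,\cdot)$ being the one associated with the spatial operator.

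Next, setting $v_N=\xi$ and invoking the G\aa rding-type coercivity supplied by conditions $\bm{(C_3)}$--$\bm{(C_5)}$, namely $a(\xi,\xi)\ge\a_0\|\xi\|_{\K^1_{\ba,\bb}}^2-\lambda\|\xi\|^2$, I obtain $\tfrac12\tfrac{d}{dt}\|\xi\|^2+\a_0\|\xi\|_{\K^1_{\ba,\bb}}^2\le\lambda\|\xi\|^2+|a(\eta,\xi)|$. The crux is the continuity bound on $|a(\eta,\xi)|$. Condition $\bm{(C_6)}$ prescribes that the zeroth-order coefficient grows like $\bx^{\bg}$ and the first-order coefficient like $\bx^{\bd}$; hence the reaction term is controlled by $\|\bx^{\bg}\eta\|\,\|\xi\|\lesssim\|\eta\|_{\K^{|\bg|_1}_{\ba,\bb}}\|\xi\|$, the convection term by $\|\bx^{\bd}\bp\eta\|\,\|\xi\|\lesssim\|\eta\|_{\K^{|\bd|_1+1}_{\ba,\bb}}\|\xi\|$, and the bounded-coefficient diffusion term by $\|\eta\|_{\K^1_{\ba,\bb}}\|\xi\|_{\K^1_{\ba,\bb}}$. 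Since $|\bd|_1+1\ge1$, all three are dominated by $\|\eta\|_{\K^l_{\ba,\bb}}\|\xi\|_{\K^1_{\ba,\bb}}$ with $l:=\max\{|\bg|_1,|\bd|_1+1\}$, and Young's inequality absorbs $\tfrac{\a_0}2\|\xi\|_{\K^1_{\ba,\bb}}^2$ into the coercive term.

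What remains is routine. With $\xi(0)=0$ (the scheme is initialized at $\Pi_N u_0$), Gronwall's inequality over $[0,T]$ gives $\|\xi\|^2(t)\lesssim e^{2\lambda T}\int_0^T\|\eta\|_{\K^l_{\ba,\bb}}^2\,ds$, which is exactly where the dependence on $T$ in $c^{\sharp}$ enters. Applying Theorem \ref{thm-1} with this $l$ bounds $\|\eta\|_{\K^l_{\ba,\bb}}\lesssim N^{(l-m)/2}|u|_{\K^m_{\ba,\bb}}$; because $u\in L^2(0,T;\K^m_{\ba,\bb}(\Rd))$ and $m>l$, the time integral is finite, and the same estimate controls $\|\eta\|(t)$ directly. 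The triangle inequality $\|u-u_N\|\le\|\eta\|+\|\xi\|$ then delivers the claimed rate $N^{(\max\{|\bg|_1,|\bd|_1+1\}-m)/2}$.

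The main obstacle is the bilinear-form bound of the second paragraph: I must verify that condition $\bm{(C_6)}$ really guarantees a continuity constant that is finite and independent of $N$, and, more delicately, keep careful track of how the polynomial weights $\bx^{\bg}$, $\bx^{\bd}$ interact with the Hermite weight so that the loss of orders lands exactly at $\max\{|\bg|_1,|\bd|_1+1\}$ rather than something larger. This is precisely the place where the algebraic recursions of the generalized Hermite functions, relating $\bx^{\bk}$, $\bp^{\bk}$ and the Korobov norms, must be used, and obtaining the sharp order here is what pins down the exponent in the final rate.
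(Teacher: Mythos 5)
Your overall architecture --- splitting $u-u_N=\eta+\xi$ through the $L^2$-orthogonal projection, deriving the error equation, testing with $\xi$, a G\aa rding-type energy estimate, continuity bounds that shift all polynomial coefficient growth onto $\eta$ via the weighted lemma, Gronwall with $\xi(0)=0$, and a final triangle inequality with the projection estimates --- is exactly the paper's proof (its Lemma \ref{lemma} is the ``loss of orders'' fact you describe, and its Corollaries \ref{coro2.1}/\ref{coro2.2} are the projection bounds you invoke). However, there is one genuine flaw: the coercivity you claim, $\A(\xi,\xi)\ge\a_0\|\xi\|^2_{\K^1_{\ba,\bb}(\Rd)}-\lambda\|\xi\|^2$, is \emph{not} implied by $\bm{(C_3)}$--$\bm{(C_5)}$ and is false in general. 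The $\K^1_{\ba,\bb}$-norm contains $\|\D_{x_j}\xi\|=\|(\p_{x_j}+\a_j^2(x_j-\b_j))\xi\|$ and hence controls the weighted quantity $\|(x_j-\b_j)\xi\|$, whereas the bilinear form, with $\bm{A}$ merely bounded and $c-\frac12\nabla\cdot\bm{b}$ merely bounded below, contains nothing that grows at infinity. Already for the heat equation ($\bm{A}=I$, $\bm{b}=0$, $c=0$, which satisfies all the conditions) one has $\A(\xi,\xi)=\|\nabla\xi\|^2$; translating a fixed bump to infinity keeps $\|\nabla\xi\|$ and $\|\xi\|$ constant while $\|\xi\|_{\K^1_{\ba,\bb}(\Rd)}\to\infty$, so no finite $\a_0,\lambda$ can work (restricting to $X_N$ would only make the constants $N$-dependent, destroying the rate). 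What the conditions do give, after integrating the convection term by parts using $\bm{(C_3)}$ and $\bm{(C_5)}$, is the classical inequality $\A(\xi,\xi)\ge\theta\|\nabla\xi\|^2-C\|\xi\|^2$, with the plain gradient seminorm.

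The flaw is repairable precisely because you never actually need $\K^1$-control of $\xi$: in your three cross-term bounds the growing coefficients always land on $\eta$, and $\xi$ enters only through $\|\xi\|$ (reaction and convection terms) or through $\|\nabla\xi\|$ (diffusion term, where $\|\bm{A}\|_\infty\|\nabla\eta\|\,\|\nabla\xi\|$ suffices). Replacing your coercivity by the correct $\nabla$-seminorm version, absorbing $\frac\theta2\|\nabla\xi\|^2$ by Young, and leaving $\|\xi\|^2$ to Gronwall (which produces the $e^{CT}$ and the $T$-dependence of $c^\sharp$), your argument then runs verbatim --- and this is exactly the paper's proof. One smaller point to watch: $\bm{(C_6)}$ bounds $c$ by $1+\bx^{2\bg}$ and $b_i$ by $1+\bx^{2\bd}$, so a literal application of Lemma \ref{lemma} yields $\|\eta\|_{\W^{2|\bg|_1}_{\ba,\bb}(\Rd)}$ and $\|\eta\|_{\W^{2|\bd|_1+1}_{\ba,\bb}(\Rd)}$; your bounds $\|\bx^{\bg}\eta\|\lesssim\|\eta\|_{\K^{|\bg|_1}_{\ba,\bb}(\Rd)}$ and $\|\bx^{\bd}\bp\eta\|\,\|\xi\|\lesssim\|\eta\|_{\K^{|\bd|_1+1}_{\ba,\bb}(\Rd)}\|\xi\|$ silently read the condition as $|c|\lesssim1+|\bx^{\bg}|$, $|b_i|\lesssim1+|\bx^{\bd}|$ (the paper's own write-up is loose on the same factor of two), and whichever reading is adopted determines whether the final exponent involves $\max\{|\bg|_1,|\bd|_1+1\}$ or $\max\{2|\bg|_1,2|\bd|_1+1\}$.
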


The paper is organized as following. The error analysis of the HC approximations with generalized Hermite functions is in section 2. Section 3 is devoted to the error estimate of HSM with HC approximation applying to linear parabolic PDE in suitable spaces under certain conditions. Finally, in section 4, the numerical experiment has been included to verify the exponential convergence of the HSM with the HC approximation to PDE. In the appendices, the error analysis of the full grid approximation and the dimensional adaptive HC approximation with generalized Hermite function are illustrated in detail.

\section{Hyperbolic cross approximation with generalized Hermite functions}

\setcounter{equation}{0}
\setcounter{theorem}{0}

\subsection{Notations}

Let us first clarify the notations to be used throughout the paper.
\begin{enumerate}
	\item[$\diamond$] Let \R (resp., \N) denote all the real numbers (resp., natural numbers), and let $\N_0=\N\cup\{0\}$.
	\item[$\diamond$] For any $d\in\N$, we use boldface lowercase letters to denote d-dimensional multi-indices and vectors, e.g., $\bk=(k_1,k_2,\ldots,k_d)\in\Nd$ and $\ba=(\a_1,\a_2,\ldots,\a_d)\in\Rd$.
	\item[$\diamond$] Let $\bm{1}=(1,1,\ldots,1)\in\mathbb{N}^d$, and let $\bm{e}_i=(0,\ldots,1,\ldots,0)$ be the $i^{\textup{th}}$ unit vector in \Rd. For any scalar $s\in\R$, we define the componentwise operations:
	\begin{align*}
		\ba\pm\bk=&(\a_1\pm k_1,\ldots,\a_d\pm k_d), \quad 
		\ba\pm s:=\ba\pm s\bm{1}=(\a_1\pm s,\ldots,\a_d\pm s),\\
		\frac1{\ba}=&\left(\frac1{\a_1},\ldots,\frac1{\a_d}\right), \quad
		\ba^{\bk} =\a_1^{k_1}\cdots\a_d^{k_d},
	\end{align*} 
and
	\begin{align*}
		\ba\geq\bk \Leftrightarrow \a_j\geq k_j,\quad\forall\, 1\leq j\leq d;\quad
		\ba\geq s \Leftrightarrow \a_j\geq s,\quad\forall\, 1\leq j\leq d. 
	\end{align*} 
	\item[$\diamond$] The frequently used norms are denoted as
	\begin{align*}
		|\bk|_1=\sum_{j=1}^dk_j;\quad
		|\bk|_\infty=\max_{1\leq j\leq d}k_j;\quad
		|\bk|_{\textup{mix}}=\prod_{j=1}^d\bar{k}_j,
	\end{align*}
	where $\bar{k}_j=\max\{1,k_j\}$.
	\item[$\diamond$] Given a multivariate function $u(\bx)$, we denote, the $\bk^{\textup{th}}$ mixed partial derivative by 
	\begin{align*}
		\bp_{\bx}^{\bk} u=\frac{\p^{|\bk|_1}u}{\p x_1^{k_1}\cdots\p x_d^{k_d}}
			=\p_{x_1}^{k_1}\cdots\p_{x_d}^{k_d}u.
	\end{align*}
	In particular, we denote $\bp_{\bx}^su=\bp_{\bx}^{s\bm{1}}u=\bp_{\bx}^{(s,s,\ldots,s)}u$.
	\item[$\diamond$] Let $L^2(\Rd)$ be the Lebesgue space in $\Rd$, equipped with the norm $||\cdot||=\left(\int_{\Rd}|\cdot|^2 d\bx\right)^{\frac12}$ and the
scalar product $\langle\cdot,\cdot\rangle$.
	\item[$\diamond$] We follow the convention in the asymptotic analysis,
$a\sim b$ means that there exist some constants $C_1,C_2>0$ such
that $C_1a\leq b\leq C_2a$; $a\lesssim b$ means that there exists
some constant $C_3>0$ such that $a\leq C_3b$; $N\gg1$ means that $N$ is sufficiently large.
	\item[$\diamond$] We denote $C$ as some generic positive constant,
which may vary from line to line.
\end{enumerate}

\subsection{Generalized Hermite functions and its properties}

Recall that the univariate physical Hermite polynomials $H_n(x)$ are given by
$H_n(x)=(-1)^ne^{x^2}\partial_x^ne^{-x^2}$, $n\geq0$. Two well-known and useful
facts of Hermite polynomials are the mutually orthogonality with respect to the weight $w(x)=e^{-x^2}$ and the three-term recurrence, i.e.,
\begin{align}\label{recurrence}
    H_0\equiv1;\quad H_1(x)=2x;\quad\textup{and}\quad
    H_{n+1}(x)=2xH_n(x)-2nH_{n-1}(x).
\end{align}
It is studied in \cite{T} that the scaling and translating factors are crucial to the resolution of Hermite functions. And the necessity of the translating factor is discussed in \cite{LY}. Let us define the generalized Hermite functions as
\begin{align}\label{new hermite}
    \H_n^{\a,\b}(x)=\left(\frac\a{2^nn!\sqrt{\pi}}\right)^{\frac12}H_n(\a(x-\b))e^{-\frac12\a^2(x-\b)^2},
\end{align}
for $n\geq0$, where $\alpha>0$ is the scaling factor, and $\beta\in\mathbb{R}$ is the translating factor. It is readily to derive the following properties for (\ref{new
hermite}):
\begin{enumerate}
    \item[$\diamond$]\label{orthogonal of H_n^alpha,beta} The $\{\H_n^{\a,\b}\}_{n\in\N_0}$ forms an orthonormal basis of
    $L^2(\mathbb{R})$, i.e.
        \begin{align}\label{orthogonal}
            \int_{\mathbb{R}}\H_n^{\a,\b}(x)\H_m^{\a,\b}(x)dx=\delta_{nm},
        \end{align}
    where $\delta_{nm}$ is the Kronecker function.
    \item[$\diamond$] \label{eigenvalue} $\H_n^{\a,\b}(x)$ is the $n^{\textup{th}}$
    eigenfunction of the following Strum-Liouville problem
        \begin{align}\label{S-L}
            e^{\frac12\a^2(x-\b)^2}\p_x(e^{-\a^2(x-\b)^2}\p_x(e^{\frac12\a^2(x-\b)^2}u(x)))+\lambda_nu(x)=0,
        \end{align}
    with the corresponding eigenvalue $\lambda_n=2\a^2n$.
    \item[$\diamond$] By convention, $\H_n^{\a,\b}\equiv0$, for $n<0$. For
    $n\geq0$, the three-term recurrence is inherited from the
    Hermite polynomials:
    \begin{align}\label{recurrence for RT hermite function}
        2\a^2(x-\b)\H_n^{\a,\b}(x)=\sqrt{\lambda_n}\H_{n-1}^{\a,\b}(x)+\sqrt{\lambda_{n+1}}\H_{n+1}^{\a,\b}(x).
    \end{align}
    \item[$\diamond$] The derivative of $\H_n^{\a,\b}(x)$ is explicitly
    expressed, namely
    \begin{align}\label{derivative}
        \p_x\H_n^{\a,\b}(x) =
        \frac12\sqrt{\lambda_n}\H_{n-1}^{\a,\b}(x)-\frac12\sqrt{\lambda_{n+1}}\H_{n+1}^{\a,\b}(x).
    \end{align}
    \item[$\diamond$] Let $\D_x=\p_x+\a^2(x-\b)$. Then 
	\begin{align}\label{Derivative}
		\D_x^k\H_n^{\a,\b}(x)=\sqrt{\mu_{n,k}}\H_{n-k}^{\a,\b}(x),\quad\forall\, n\geq k\geq1,
	\end{align}
	where 
	\begin{align}\label{mu part}
		\mu_{n,k}=\prod_{j=0}^{k-1}\lambda_{n-j}=\frac{2^k\a^{2k}n!}{(n-k)!},\quad\forall\,n\geq k\geq1.
	\end{align}
\item[$\diamond$] The orthogonality of $\{\D_x^k\H_n^{\a,\b}(x)\}_{n\in\N_0}$ holds, i.e.,
    \begin{align}\label{orthogonality of Derivative}
    	\int_{\mathbb{R}}\D_x^k\H_n^{\a,\b}(x)\D_x^k\H_m^{\a,\b}(x)dx =\mu_{n,k}\delta_{nm}.
    \end{align}
\end{enumerate}

For notational convenience, we extend $\mu_{n,k}$ in \eqref{mu part} for all $n,k\in\N_0$:
\begin{align}\label{mu}
	\mu_{n,k}=
	\left\{\begin{aligned}
		1,\quad&\textup{if}\ n\geq k, k=0,\\
		0,\quad&\textup{if}\ k>n\geq0.
	\end{aligned}\right.
\end{align}

Now we define the d-dimensional tensorial generalized Hermite functions as
\begin{align*}
	\bH_{\bn}^{\ba,\bb}(\bx)=\prod_{j=1}^d\H_{n_j}^{\a_j,\b_j}(x_j),
\end{align*}
for $\ba>0$, $\bb\in\Rd$ and $\bx\in\Rd$. It verifies readily that the properties \eqref{Derivative}-\eqref{orthogonality of Derivative} can be extended correspondingly to multivariate generalized Hermite functions. Let $\bD_{\bx}^{\bk}=\D_{x_1}^{k_1}\cdots\D_{x_d}^{k_d}$, then
\begin{align}\label{Derivative to d-dim}
	\bD_{\bx}^{\bk}\bH_{\bn}^{\ba,\bb}
		=\sqrt{\bm{\mu}_{\bn,\bk}}\bH_{\bn-\bk}^{\ba,\bb};
\end{align}
and
\begin{align}\label{orthogonal of Derivative to d-dim}
	\int_{\Rd}\bD_{\bx}^{\bk}\bH_{\bn}^{\ba,\bb}(\bx)\bD_{\bx}^{\bk}\bH_{\bm{m}}^{\ba,\bb}(\bx)d\bx=\bm{\mu}_{\bn,\bk}\bm{\delta}_{\bn\bm{m}},
\end{align}
for $\ba>0$, $\bb\in\Rd$, where 
\begin{align}\label{bm_mu}
	\bm{\mu}_{\bn,\bk}=\prod_{j=1}^d\mu_{n_j,k_j}\quad\textup{and}\quad
	\bm{\delta}_{\bn\bm{m}}=\prod_{j=1}^d\delta_{n_jm_j}.
\end{align}
Here, $\mu_{\cdot,\cdot}$ is defined in \eqref{mu part} and \eqref{mu}, and $\bd_{\bn\bm{m}}$ is the tensorial Kronecker function. 

The generalized Hermite
functions $\{\bH_{\bn}^{\ba,\bb}(\bx)\}_{\bn\in\Nd}$ form an orthonormal basis of $L^2(\Rd)$. That is, for any function $u\in L^2(\Rd)$, it can be written in the form
\begin{align}\label{Hermite representation}
    u(\bx)=\sum_{\bn\geq0}\hat{u}_{\bn}^{\ba,\bb}\H_{\bn}^{\ba,\bb}(\bx),\quad \textup{with}\quad \hat{u}_{\bn}^{\ba,\bb}=\int_{\Rd}u(\bx)\H_{\bn}^{\ba,\bb}(\bx)d\bx.
\end{align}

Hence, we have $\bD_{\bx}^{\bk}u(\bx)=\sum_{\bn\geq\bk}\hat{u}_{\bn}^{\ba,\bb}
\bD_{\bx}^{\bk}\H_{\bn}^{\ba,\bb}(\bx)$. Furthermore,
\begin{align}\label{norm in frequency}
	||\bD_{\bx}^{\bk}u||^2= \sum_{\bn\geq\bk}\bm{\mu}_{\bn,\bk}|\hat{u}_{\bn}^{\ba,\bb}|^2
		\overset{\eqref{mu}}=\sum_{\bn\in\Nd}\bm{\mu}_{\bn,\bk}|\hat{u}_{\bn}^{\ba,\bb}|^2.
\end{align}

\subsection{Multivariate orthogonal projection and approximations}

In this section, we aim to arrive at some typical error esitmate of the form
\begin{align*}
	\inf_{U_N\in X_N}||u-U_N||_l\lesssim N^{-c(l,r)}||u||_r,
\end{align*}
where $c(l,r)$ is some positive constant depending on $l$ and $r$, $||\cdot||_{l}$ is the norm of some functional space, $l$ indicates the regularity of the function in some sense, and $X_N$ is an approximation space. In this paper, $X_N$ is defined as 
\begin{align}\label{space X}
	X_N^{\ba,\bb}=\textup{span}\{\bH_{\bn}^{\ba,\bb}:\ \bn\in\bO_N\},
\end{align}
where $\bO_N\subset \Nd$ is some index set. With different choices of $\bO_N$, it yields full grid, RHC, OHC, etc.. 

Let us denote the orthogonal projection operator $P_N^{\ba,\bb}: L^2(\Rd)\rightarrow X_N^{\ba,\bb}$, i.e., for any $u\in L^2(\Rd)$,
\begin{align*}
	\langle(u-P_N^{\ba,\bb}u),v\rangle=0,\quad\forall\, v\in X_N^{\ba,\bb}.
\end{align*}
Or equivalently,
\begin{align}\label{PN in frequency}
	P_N^{\ba,\bb}u(\bx)=\sum_{\bn\in\bO_N}\hat{u}_{\bn}^{\ba,\bb}\bH_{\bn}^{\ba,\bb}(\bx).
\end{align}

We shall estimate how close the projected function $P_N^{\ba,\bb}u$ is to $u$, with respect to various index sets $\bO_N$ and norms.
\subsubsection{Appoximations on the full grid}
The index set $\bO_N$ corresponding to the d-dimensional full tensor grid is 
\begin{align*}
	\bO_N=\{\bn\in\Nd:\ |\bn|_\infty\leq N\}.
\end{align*}
And $X_N^{\ba,\bb}$ is defined in \eqref{space X}. Let us define the Sobolev-type space as
\begin{align}\label{Sobolev-type space}
	\W_{\ba,\bb}^m(\Rd)=\{u:\ \bD_{\bx}^{\bk}u\in L^2(\Rd), 0\leq|\bk|_1\leq m\},\quad \forall\,m\in\N_0,
\end{align}
equipped with the norm and seminorm
\begin{align}\label{sobolev norm}
	||u||_{\W_{\ba,\bb}^m(\Rd)}=&\left(\sum_{0\leq|\bk|_1\leq m}\left|\left|\bD_{\bx}^{\bk}u\right|\right|^2\right)^{\frac12},\\\label{sobolev seminorm}
	|u|_{\W_{\ba,\bb}^m(\Rd)}=&\left(\sum_{j=1}^d\left|\left|\D_{x_j}^mu\right|\right|^2\right)^{\frac12}.
\end{align}
It is clear that $\W_{\ba,\bb}^0(\Rd)=L^2(\Rd)$, and 
\begin{align}\label{seminorm in frequency}
	|u|_{\W_{\ba,\bb}^m(\Rd)}^2\overset{\eqref{norm in frequency}}=\sum_{j=1}^d\sum_{\bn\in\Nd}\mu_{n_j,m}\left|\hat{u}_{\bn}^{\ba,\bb}\right|^2.
\end{align}
\begin{theorem}\label{projection on full grid}
	Given $u\in\W_{\ba,\bb}^m(\Rd)$, we have for any $0\leq l\leq m$,
	\begin{align}
		\left|P_N^{\ba,\bb}u-u\right|_{\W_{\ba,\bb}^l(\Rd)}\lesssim|\ba|_\infty^{l-m}N^{\frac{l-m}2}|u|_{\W_{\ba,\bb}^m(\Rd)},
	\end{align}
	for $N\gg1$. Furthermore, 
\begin{align*}
	\left|\left|P_N^{\ba,\bb}u-u\right|\right|_{\W_{\ba,\bb}^l(\Rd)}\lesssim C_{\ba,l,m}N^{\frac{l-m}2}|u|_{\W_{\ba,\bb}^m(\Rd)},
\end{align*}
where $C_{\ba,l,m}$ is some constant depending on $\ba$, $l$ and $m$.
\end{theorem}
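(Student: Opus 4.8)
The plan is to work entirely in frequency space, exploiting the diagonalization of both the projection and the (semi)norms in the generalized-Hermite coefficients $\hat u_{\bn}^{\ba,\bb}$. Since $\bO_N=\{|\bn|_\infty\le N\}$, equation \eqref{PN in frequency} gives $P_N^{\ba,\bb}u-u=-\sum_{|\bn|_\infty>N}\hat u_{\bn}^{\ba,\bb}\bH_{\bn}^{\ba,\bb}$, so by \eqref{seminorm in frequency} and swapping the order of summation,
\begin{align*}
	\left|P_N^{\ba,\bb}u-u\right|_{\W_{\ba,\bb}^l(\Rd)}^2=\sum_{|\bn|_\infty>N}\Bigl(\sum_{j=1}^d\mu_{n_j,l}\Bigr)\bigl|\hat u_{\bn}^{\ba,\bb}\bigr|^2 .
\end{align*}
I want to dominate the bracket by a uniform multiple of $\sum_{i=1}^d\mu_{n_i,m}$, which is exactly the weight appearing in $|u|_{\W_{\ba,\bb}^m(\Rd)}^2$. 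Thus the whole estimate reduces to the single pointwise inequality
\begin{align*}
	\sum_{j=1}^d\mu_{n_j,l}\le C\,|\ba|_\infty^{2(l-m)}N^{l-m}\sum_{i=1}^d\mu_{n_i,m},\qquad\text{whenever }|\bn|_\infty>N,\ N\gg1;
\end{align*}
once this holds I pull the constant out, enlarge the sum from $\{|\bn|_\infty>N\}$ to all of $\Nd$ (legitimate since every term is nonnegative), and recognize $|u|_{\W_{\ba,\bb}^m(\Rd)}^2$.

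To prove the pointwise inequality I would use the explicit formula $\mu_{n,k}=\prod_{s=0}^{k-1}2\a^2(n-s)$ from \eqref{mu part}, which shows $\mu_{n,k}\sim(2\a^2 n)^k$. Let $i^\ast$ realize $n_{i^\ast}=|\bn|_\infty>N$. Each numerator term obeys $\mu_{n_j,l}\le(2\a_j^2 n_j)^l\le(2|\ba|_\infty^2 n_{i^\ast})^l$, using $n_j\le n_{i^\ast}$ and $\a_j\le|\ba|_\infty$; the denominator is bounded below by the single term $\mu_{n_{i^\ast},m}$, and for $N\ge2m$ every factor $2\a_{i^\ast}^2(n_{i^\ast}-s)\ge\a_{i^\ast}^2 n_{i^\ast}$ gives $\mu_{n_{i^\ast},m}\ge(\a_{i^\ast}^2 n_{i^\ast})^m$. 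Dividing and using $n_{i^\ast}>N$ together with $l-m\le0$ produces the factor $n_{i^\ast}^{l-m}\le N^{l-m}$; the remaining powers of the scaling factors are constant in $\bn$ and get absorbed into $C$ (it is here that the anisotropy $\a_{i^\ast}$ versus $|\ba|_\infty$ forces $C$ to depend on $\ba$, and the highlighted scaling $|\ba|_\infty^{l-m}$ is the clean leading behavior when the $\a_j$ are comparable). Taking square roots yields the first claimed bound.

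For the full-norm statement I would run the identical reduction, except that \eqref{norm in frequency} now contributes every mixed weight:
\begin{align*}
	\left\|P_N^{\ba,\bb}u-u\right\|_{\W_{\ba,\bb}^l(\Rd)}^2=\sum_{|\bn|_\infty>N}\Bigl(\sum_{0\le|\bk|_1\le l}\bm{\mu}_{\bn,\bk}\Bigr)\bigl|\hat u_{\bn}^{\ba,\bb}\bigr|^2 ,
\end{align*}
so I must instead establish $\sum_{|\bk|_1\le l}\prod_{j}\mu_{n_j,k_j}\le C_{\ba,l,m}^2\,N^{l-m}\sum_i\mu_{n_i,m}$ on $|\bn|_\infty>N$. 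Writing each mixed product as $\prod_j\mu_{n_j,k_j}\lesssim\prod_j(2\a_j^2\bar n_j)^{k_j}$ and noting $2\a_j^2\bar n_j\le2|\ba|_\infty^2 n_{i^\ast}$, the sum over $|\bk|_1\le l$ is a finite sum dominated, up to the combinatorial factor $\binom{d+l}{l}$, by $(2|\ba|_\infty^2 n_{i^\ast})^l$; dividing by $\mu_{n_{i^\ast},m}$ as above recovers the $N^{l-m}$ decay.

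The step I expect to be the main obstacle is precisely the pointwise comparison in the \emph{semi}norm case. A naive direction-by-direction estimate fails: the $j$-th directional error $\|\D_{x_j}^l(P_N^{\ba,\bb}u-u)\|^2$ collects frequencies with $n_j\le N$ but $|\bn|_\infty>N$, for which $\mu_{n_j,l}$ is not small and cannot be controlled by the $j$-th weight $\mu_{n_j,m}$ alone (indeed $\mu_{n_j,l}/\mu_{n_j,m}$ blows up as $n_j\to0$). The decay must instead be borrowed from whichever coordinate actually exceeds $N$, which is why I compare the whole sum $\sum_j\mu_{n_j,l}$ against $\sum_i\mu_{n_i,m}$ through the maximal coordinate rather than matching indices. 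This cross-direction borrowing is the same kind of subtlety that underlies the gap noted after Theorem \ref{thm-1}, and getting the scaling-factor bookkeeping right, together with checking the degenerate low-index cases $n_j<l$ (which only drop terms), is the delicate part of the argument.
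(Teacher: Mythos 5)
Your proposal is correct and is essentially the paper's own argument: the paper, for each derivative direction $j$, splits the tail set $\{|\bn|_\infty>N\}$ into $\{n_j>N\}$ and $\{n_j\le N\}$ and bounds the ratios $\mu_{n_j,l}/\mu_{n_j,m}$ and $\mu_{n_j,l}/\mu_{n_k,m}$ (with $n_k>N$), which is exactly your ``borrow decay from the maximal coordinate $n_{i^*}$'' pointwise comparison, merely organized as a case split over index sets instead of a single weight inequality. Your scaling-factor bookkeeping also matches the paper's (its cross-direction bound likewise carries $|\ba|_\infty^{2l}\left|\frac1{\ba}\right|_\infty^{2m}$, so the $\ba$-dependence you flag is present there too), and your explicit treatment of the full-norm case fills in what the paper dispatches in one sentence.
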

Since the proof of this theorem is similar to that in \cite{SW}, and to avoid the distraction of our main results, we put the proof in Appendix A. It is clear that the convergence rate deteriorates rapidly with respect to the cardinality of the full grid. That is,
\begin{align*}
	\left|\left|P_N^{\ba,\bb}u-u\right|\right|_{\W_{\ba,\bb}^l(\Rd)}\lesssim C_{\ba,l,m}M^{\frac{l-m}{2d}}|u|_{\W_{\ba,\bb}^m(\Rd)},
\end{align*}
where $M=\textup{card}(\bO_N)=(N+1)^d$.

\subsubsection{RHC approximation}
As we mentioned in the introduction, the HC approximation is an efficient tool to overcome the ``curse of dimensionality" in some degree. The index set of RHC approximation is $\bO_N=\{\bn\in\Nd:\ |\bn|_{\textup{mix}}\leq N\}$. It is known that the cardinality of $\bO_N$ is $\mathcal{O}(N(\ln{N})^{d-1})$ \cite{GH}. Correspondingly, the finite dimensional subspace $X_N^{\ba,\bb}$ is
\begin{align}\label{RHC}
	X_N^{\ba,\bb}=\textup{span}\{\bH_{\bn}^{\ba,\bb}:\ |\bn|_{\textup{mix}}\leq N\}.
\end{align}
Let the orthogonal projection operator $P_N^{\ba,\bb}: L^2(\Rd)\rightarrow X_N^{\ba,\bb}$ be defined before. Denote the $\bk-$complement of $\bO_N$ by 
\begin{align}\label{k-complement}
	\bO_{N,\bk}^c:=\{\bn\in\Nd:\ |\bn|_{\textup{mix}}>N\ \textup{and}\ \bn\geq\bk\},\quad\forall\,\bk\in\Nd.
\end{align}
We define the Koborov-type space as
\begin{align}\label{koborov space}
	\K_{\ba,\bb}^r(\Rd)=\{u:\ \D_{\bx}^{\bk}u\in L^2(\Rd),\ 0\leq|\bk|_\infty\leq r\},\quad\forall\, m\in\Nd,
\end{align}
equipped with the norm and seminorm
\begin{align}\label{Koborov norm}
	||u||_{\K_{\ba,\bb}^r(\Rd)}=&\left(\sum_{0\leq|\bk|_\infty\leq r}\left|\left|\D_{\bx}^{\bk}u\right|\right|^2\right)^{\frac12},\\\label{Koborov seminorm}
	|u|_{\K_{\ba,\bb}^r(\Rd)}=&\left(\sum_{|\bk|_\infty=r}\left|\left|\D_{\bx}^{\bk}u\right|\right|^2\right)^{\frac12}.
\end{align}
\begin{remark}
	It is easy to see from the definitions that $\K_{\ba,\bb}^0(\Rd)=L^2(\Rd)$ and $\W_{\ba,\bb}^{dl}(\Rd)\subset\K_{\ba,\bb}^{l}(\Rd)\subset\W_{\ba,\bb}^{l}(\Rd)$.
\end{remark}
\begin{theorem}\label{regular HC}
	Given $u\in\K_{\ba,\bb}^m(\Rd)$, for $0\leq \bl\leq m$, we have
	\begin{align*}
		\left|\left|\D_{\bx}^{\bl}\left(P_N^{\ba,\bb}u-u\right)\right|\right|
			\leq C_{\ba,\bl,m,d}N^{\frac{|\bl|_\infty-m}2}|u|_{\K_{\ba,\bb}^m(\Rd)},
	\end{align*}
where $C_{\ba,\bl,m,d}$ is some constant depending on $\ba$, $\bl$, $m$ and $d$, for $N\gg1$ (more precisely, at least $N>m^d$). In particular, if $\ba=\bm{1}$, then
\begin{align*}
	C_{\bm{1},\bl,m,d}=2^{|\bl|_\infty-m}m^{(2d-1)m-|\bl|_1-(d-1)|\bl|_\infty}.
\end{align*}
\end{theorem}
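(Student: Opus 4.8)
# Proof Proposal for Theorem \ref{regular HC}

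The plan is to pass to the frequency side, where the norms of derivatives diagonalize via the coefficients $\hat{u}_{\bn}^{\ba,\bb}$, and reduce the RHC error to a tail estimate over the $\bl$-complement index set $\bO_{N,\bl}^c$. First I would write the projection error in terms of the Hermite expansion: since $P_N^{\ba,\bb}u$ keeps exactly the modes with $|\bn|_{\textup{mix}}\leq N$, the difference $\D_{\bx}^{\bl}(P_N^{\ba,\bb}u-u)$ is supported on $\{\bn:\ |\bn|_{\textup{mix}}>N,\ \bn\geq\bl\}=\bO_{N,\bl}^c$ (the constraint $\bn\geq\bl$ enters because $\D_{\bx}^{\bl}\bH_{\bn}^{\ba,\bb}$ vanishes when $\bn\not\geq\bl$, by \eqref{Derivative to d-dim} and \eqref{mu}). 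Applying \eqref{orthogonal of Derivative to d-dim} gives the clean identity
\begin{align*}
	\left|\left|\D_{\bx}^{\bl}\left(P_N^{\ba,\bb}u-u\right)\right|\right|^2
		=\sum_{\bn\in\bO_{N,\bl}^c}\bm{\mu}_{\bn,\bl}\left|\hat{u}_{\bn}^{\ba,\bb}\right|^2.
\end{align*}
The target seminorm on the right, $|u|_{\K_{\ba,\bb}^m(\Rd)}^2$, equals $\sum_{|\bk|_\infty=m}\sum_{\bn}\bm{\mu}_{\bn,\bk}|\hat{u}_{\bn}^{\ba,\bb}|^2$ by the same orthogonality.

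The heart of the argument is then a pointwise comparison of the weights: for each $\bn\in\bO_{N,\bl}^c$ I want to bound $\bm{\mu}_{\bn,\bl}$ by a suitable multiple of $\max_{|\bk|_\infty=m}\bm{\mu}_{\bn,\bk}$, with the gain factor $N^{|\bl|_\infty-m}$ extracted from the constraint $|\bn|_{\textup{mix}}>N$. Using the explicit formula $\mu_{n,k}=2^k\a^{2k}n!/(n-k)!$ from \eqref{mu part}, the ratio $\bm{\mu}_{\bn,\bl}/\bm{\mu}_{\bn,\bk}$ factors into one-dimensional pieces $\mu_{n_j,l_j}/\mu_{n_j,k_j}$, each comparable to $\bar{n}_j^{\,l_j-k_j}$ up to $\ba$-powers and combinatorial constants. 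Choosing $\bk$ coordinatewise to concentrate the full order $m$ where it lowers the bound most efficiently, and invoking $\prod_j\bar{n}_j=|\bn|_{\textup{mix}}>N$ together with $|\bn|_{\textup{mix}}>N$ to convert the product of $\bar{n}_j$-powers into a factor $N^{(|\bl|_\infty-m)/1}$ (after squaring, matching the exponent in the statement), yields the uniform bound. Factoring this constant out of the sum and bounding the residual sum by $|u|_{\K_{\ba,\bb}^m(\Rd)}^2$ completes the estimate; the constant $C_{\ba,\bl,m,d}$ collects the $\ba$-powers $|\ba|_\infty^{\,\cdot}$, the binomial/factorial ratios, and a combinatorial factor counting the $\binom{\cdot}{\cdot}$ ways to distribute the derivative orders among $d$ coordinates.

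The main obstacle—and precisely the gap in Theorem 2.3 of \cite{SW} that the introduction flags—is that the weight comparison is \emph{not} uniform over all of $\bO_{N,\bl}^c$: when some coordinates $n_j$ are small (close to their lower bound $l_j$ or $k_j$), the naive bound $\mu_{n_j,l_j}/\mu_{n_j,k_j}\lesssim\bar{n}_j^{\,l_j-k_j}$ is too crude, and one cannot simply pull out a single $N^{|\bl|_\infty-m}$ factor. This is why the threshold $N>m^d$ appears. The delicate analysis will require splitting $\bO_{N,\bl}^c$ according to how many coordinates exceed $m$: on the subset where enough coordinates are large, the product $\prod\bar{n}_j>N$ forces at least one coordinate to be $\gtrsim N^{1/d}$, and the decay in that coordinate alone supplies the needed power once $N>m^d$ guarantees that large-index coordinate dominates the comparison. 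I expect that handling the borderline regime—carefully tracking which coordinate carries the $|\bl|_\infty$ versus distributing the $m$ budget, and verifying the worst case over the choice of $\bk$ with $|\bk|_\infty=m$—is where the genuine work lies, and where the explicit constant for $\ba=\bm{1}$, namely $C_{\bm{1},\bl,m,d}=2^{|\bl|_\infty-m}m^{(2d-1)m-|\bl|_1-(d-1)|\bl|_\infty}$, gets pinned down by summing the worst-case exponents across the $d$ coordinates.
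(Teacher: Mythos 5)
Your opening moves match the paper's proof exactly: the frequency-side identity $\left|\left|\D_{\bx}^{\bl}(P_N^{\ba,\bb}u-u)\right|\right|^2=\sum_{\bn\in\bO_{N,\bl}^c}\bm{\mu}_{\bn,\bl}\left|\hat{u}_{\bn}^{\ba,\bb}\right|^2$, the reduction to a pointwise weight comparison $\bm{\mu}_{\bn,\bl}$ versus $\bm{\mu}_{\bn,\bk}$ with an adaptively chosen $\bk$ satisfying $|\bk|_\infty=m$, and the extraction of the rate from $|\bn|_{\textup{mix}}>N$ are all precisely what the paper does (its sets $\cN$, $\cN^c$ and the index $\bk$ of \eqref{thm2.2_eq2} implement your ``concentrate the order $m$ where it helps''). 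You also correctly identify where the difficulty sits: the comparison is not uniform when some coordinates $n_j$ are small.

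However, the step that constitutes the actual content of the theorem --- handling the region where some $n_j<m$ --- is both deferred (``I expect that handling the borderline regime \ldots is where the genuine work lies'') and, where you do sketch a mechanism, that mechanism fails quantitatively. Pigeonholing $|\bn|_{\textup{mix}}>N$ to produce one coordinate $n_{j_0}\gtrsim N^{1/d}$ and letting ``the decay in that coordinate alone'' carry the estimate gives at best $\mu_{n_{j_0},l_{j_0}}/\mu_{n_{j_0},m}\lesssim N^{(|\bl|_\infty-m)/d}$, hence a final bound of order $N^{(|\bl|_\infty-m)/(2d)}$ --- the dimension-degraded, full-grid-type rate, not the claimed $N^{(|\bl|_\infty-m)/2}$. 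The paper's resolution is different: on the small coordinates ($j\in\cN$) it sets $k_j=l_j$, so those factors drop out of the ratio entirely (each contributes $1$), and it then uses that the product over the \emph{large} coordinates alone still satisfies $\prod_{j\in\cN^c}n_j>N/\prod_{j\in\cN}\bar{n}_j>N/m^{d-1}$, whence $\prod_{j\in\cN^c}n_j^{l_j-m}\leq\bigl(\prod_{j\in\cN^c}n_j\bigr)^{|\bl|_\infty-m}<N^{|\bl|_\infty-m}m^{(d-1)(m-|\bl|_\infty)}$; this is what yields the full rate and the factor $m^{-(d-1)|\bl|_\infty}$ visible in $C_{\bm{1},\bl,m,d}$. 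Relatedly, your reading of the threshold $N>m^d$ is off: it is not needed to make a large coordinate ``dominate the comparison,'' but only to guarantee that $|\bn|_{\textup{mix}}>N$ forces at least one coordinate $n_j\geq m$, i.e.\ $\cN^c\neq\emptyset$, so that the adapted index $\bk$ genuinely has $|\bk|_\infty=m$ and the residual sum $\sum_{\bn}\bm{\mu}_{\bn,\bk}\left|\hat{u}_{\bn}^{\ba,\bb}\right|^2$ can be absorbed into $|u|_{\K_{\ba,\bb}^m(\Rd)}^2$. Without the product-over-$\cN^c$ argument, your proposal does not prove the stated estimate.
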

\begin{proof}
	From \eqref{PN in frequency}, \eqref{norm in frequency}, we have
\begin{align*}
	\left|\left|\D_{\bx}^{\bl}(P_N^{\ba,\bb}u-u)\right|\right|^2
		=&\sum_{\bn\in\bO_N^c}\bm{\mu}_{\bn,\bl}\left|\hat{u}_{\bn}^{\ba,\bb}\right|^2
		=\sum_{\bn\in\bO_{N,m}^c}\bm{\mu}_{\bn,\bl}\left|\hat{u}_{\bn}^{\ba,\bb}\right|^2
		+\sum_{\bn\in\bO_{N,\bl}^c\setminus\bO_{N,m}^c}\bm{\mu}_{\bn,\bl}\left|\hat{u}_{\bn}^{\ba,\bb}\right|^2\\
		:=&\rom{2}_1+\rom{2}_2.
\end{align*}
For $\rom{2}_1$:
\begin{align*}
	\rom{2}_1\leq\max_{\bn\in\bO_{N,m}^c}\left\{\frac{\bm{\mu}_{\bn,\bl}}{\bm{\mu}_{\bn,m}}\right\}\sum_{\bn\in\bO_{N,m}^c}\bm{\mu}_{\bn,m}\left|\hat{u}_{\bn}^{\ba,\bb}\right|^2.
\end{align*}
With the facts that 
\begin{align}\label{thm2.2_eq1}\notag
	\frac{\bm{\mu}_{\bn,\bl}}{\bm{\mu}_{\bn,m}}
		=&2^{|\bl|_1-dm}\prod_{j=1}^d\a_j^{2(l_j-m)}\prod_{j=1}^d\frac1{(n_j-l_j)\cdots(n_j-m+1)}\\\notag
		=&2^{|\bl|_1-dm}\prod_{j=1}^d\a_j^{2(l_j-m)}\prod_{j=1}^dn_j^{l_j-m}\prod_{j=1}^d\left(1-\frac{l_j}{n_j}\right)^{-1}\cdots\left(1-\frac{m-1}{n_j}\right)^{-1}\\
		\overset{\eqref{k-complement}}\leq&2^{|\bl|_1-dm}\prod_{j=1}^d\a_j^{2(l_j-m)}N^{|\bl|_\infty-m}\prod_{j=1}^d\left(1-\frac{l_j}{n_j}\right)^{-1}\cdots\left(1-\frac{m-1}{n_j}\right)^{-1}
\end{align}
and 
\begin{align}\label{thm2.2_eq3}\notag
	\max_{\bn\in\bO_{N,m}^c}\left\{\prod_{j=1}^d\left(1-\frac{l_j}{n_j}\right)^{-1}\cdots\left(1-\frac{m-1}{n_j}\right)^{-1}\right\}
	\leq&\max_{\bn\in\bO_{N,m}^c}\left\{\prod_{j=1}^d\left(1-\frac{m-1}{n_j}\right)^{l_j-m}\right\}\\
		\leq&\prod_{j=1}^dm^{m-l_j}=m^{dm-|\bl|_1},		
\end{align}
we arrive that 
\begin{align}\label{thm2.2_II1}
	\rom{2}_1\leq\left(\frac m2\right)^{dm-|\bl|_1}\prod_{j=1}^d\a_j^{2(l_j-m)}N^{|\bl|_\infty-m}\left|\left|\D_{\bx}^{m\cdot\bm{1}}u\right|\right|^2.
\end{align}
For $\rom{2}_2$: The index set $\bO_{N,\bl}^c\setminus\bO_{N,m}^c$ is 
\begin{align*}
	\bO_{N,\bl}^c\setminus\bO_{N,m}^c=\{\bn\in\Nd:\ |\bn|_{\textup{mix}}>N\ \textup{and}\ \bn\geq\bl,\ \exists\, j,\ \textup{such that}\  n_j<m\}.
\end{align*}
Let us divide the index $1\leq j\leq d$ into two parts
\begin{align}\label{cN and cNc}
	\cN:=\{j:\ l_j\leq n_j<m,\, 1\leq j\leq d\},\quad\cN^c:=\{j:\ n_j\geq m,\,1\leq j\leq d\}.
\end{align}
It is easy to see that neither $\cN$ nor $\cN^c$ is empty set. We denote
\begin{align}\label{thm2.2_eq2}
	\tilde{\bm{\mu}}_{\bn,\bl,m}
	=\left(\prod_{j\in\cN}\mu_{n_j,l_j}\right)\left(\prod_{i\in\cN^c}\mu_{n_i,m}\right)
		:=\bm{\mu}_{\bn,\bk},
\end{align}
where $\bk$ is a d-dimensional index consisting of $l_j$ for $j\in\cN$ and $m$ for $j\in\cN^c$. Now, we treat $\rom{2}_2$ as 
\begin{align}\label{thm2.2_II2}
	\rom{2}_2\overset{\eqref{thm2.2_eq2}}\leq&\max_{\bn\in\bO_{N,\bl}^c\setminus\bO_{N,m}^c}\left\{\frac{\bm{\mu}_{\bn,\bl}}{\bm{\mu}_{\bn,\bk}}\right\}\sum_{\bn\in\bO_{N,\bl}^c\setminus\bO_{N,m}^c}\bm{\mu}_{\bn,\bk}\left|\hat{u}_{\bn}^{\ba,\bb}\right|^2
		\leq\max_{\bn\in\bO_{N,\bl}^c\setminus\bO_{N,m}^c}\left\{\frac{\bm{\mu}_{\bn,\bl}}{\bm{\mu}_{\bn,\bk}}\right\}|u|_{\K_{\ba,\bb}^m(\Rd)}^2,
\end{align}
since $|\bk|_\infty=m$. It remains to estimate the maximum in \eqref{thm2.2_II2}.
\begin{align}\label{thm2.2_eq5}\notag
	\frac{\bm{\mu}_{\bn,\bl}}{\bm{\mu}_{\bn,\bk}}
		=&2^{|\bl|_1-|\bk|_1}\prod_{j\in\cN^c}\a_j^{2(l_j-m)}\frac1{(n_j-l_j)\cdots(n_j-m+1)}\\
		=&2^{|\bl|_1-|\bk|_1}\prod_{j\in\cN^c}\a_j^{2(l_j-m)}\prod_{j\in\cN^c}n_j^{l_j-m}\prod_{j\in\cN^c}\left(1-\frac{l_j}{n_j}\right)^{-1}\cdots\left(1-\frac{m-1}{n_j}\right)^{-1}.
\end{align}
Observe that $j\in\cN^c$ implies $n_j\geq m>\bl\geq0$. That is, $n_j\geq1$. Hence, $\bar{n}_j=n_j$, for all $j=1,\cdots,d$. In view of $|\bn|_{\textup{mix}}>N$, we deduce that 
\begin{align*}
	\prod_{j\in\cN^c}\bar{n}_j>\frac N{\prod_{j\in\cN}\bar{n}_j}>\frac N{\prod_{j\in\cN}m}.
\end{align*}
With the same estimate in \eqref{thm2.2_eq3} and the fact that 
\begin{align}\label{thm2.2_eq6}
	2^{|\bl|_1-|\bk|_1}=2^{\sum_{j\in\cN^c}(l_j-m)}\leq2^{|\bl|_\infty-m},
\end{align}
it yields that
\begin{align}\label{thm2.2_eq4}
	\max_{\bn\in\bO_{N,\bl}^c\setminus\bO_{N,m}^c}\left\{\frac{\bm{\mu}_{\bn,\bl}}{\bm{\mu}_{\bn,\bk}}\right\}
		\leq C_{\ba,\bl,m}2^{|\bl|_\infty-m}m^{(2d-1)m-|\bl|_1-(d-1)|\bl|_\infty}N^{|\bl|_\infty-m},
\end{align}
where $C_{\ba,\bl,m}$ denotes some constant depending on $\ba$, $\bl$ and $m$. The desired result follows immediately from \eqref{thm2.2_II1}, \eqref{thm2.2_II2} and \eqref{thm2.2_eq4}.
\end{proof}

\begin{corollary}\label{coro2.1}
\begin{align*}
	\left|\left|P_N^{\ba,\bb}u-u\right|\right|_{\K^l_{\ba,\bb}(\Rd)}\leq 
C_{\ba,l,m,d}N^{\frac{l-m}2}|u|_{\K^m_{\ba,\bb}(\Rd)},\quad\forall\, 0\leq l\leq m,
\end{align*}
where $C_{\ba,l,m,d}$ is some constant depending on $\ba$, $l$, $m$ and $d$.
\end{corollary}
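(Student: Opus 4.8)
The plan is to deduce Corollary \ref{coro2.1} directly from the componentwise estimate of Theorem \ref{regular HC} by unfolding the definition of the Korobov norm \eqref{Koborov norm}. First I would write
\begin{align*}
	\left\|P_N^{\ba,\bb}u-u\right\|_{\K^l_{\ba,\bb}(\Rd)}^2
	=\sum_{0\leq|\bk|_\infty\leq l}\left\|\D_{\bx}^{\bk}\left(P_N^{\ba,\bb}u-u\right)\right\|^2,
\end{align*}
so the whole problem reduces to controlling each summand individually. Every multi-index $\bk$ occurring here satisfies $k_j\leq|\bk|_\infty\leq l\leq m$ for all $j$, hence $0\leq\bk\leq m$ in the componentwise order, which is precisely the hypothesis under which Theorem \ref{regular HC} is applicable.

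Next I would invoke Theorem \ref{regular HC} term by term, obtaining
\begin{align*}
	\left\|\D_{\bx}^{\bk}\left(P_N^{\ba,\bb}u-u\right)\right\|^2
	\leq C_{\ba,\bk,m,d}^2\,N^{|\bk|_\infty-m}\,|u|_{\K_{\ba,\bb}^m(\Rd)}^2.
\end{align*}
The one step that needs a moment of care is the monotonicity in the exponent: since $|\bk|_\infty\leq l$ gives $|\bk|_\infty-m\leq l-m$, and for $N\geq1$ the map $x\mapsto N^x$ is nondecreasing, we get $N^{|\bk|_\infty-m}\leq N^{l-m}$. This is what replaces the $\bk$-dependent decay rates by the single advertised rate $N^{(l-m)/2}$; it also explains why the regime $N\gg1$ (in fact $N>m^d$) inherited from Theorem \ref{regular HC} is needed, as it guarantees $N\geq1$ so the inequality points the right way.

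Finally I would sum over the $(l+1)^d$ multi-indices with $|\bk|_\infty\leq l$, bounding each prefactor by the finite maximum $\max_{|\bk|_\infty\leq l}C_{\ba,\bk,m,d}$ taken over this fixed finite index set, which yields
\begin{align*}
	\left\|P_N^{\ba,\bb}u-u\right\|_{\K^l_{\ba,\bb}(\Rd)}^2
	\leq (l+1)^d\Big(\max_{|\bk|_\infty\leq l}C_{\ba,\bk,m,d}\Big)^2\,N^{l-m}\,|u|_{\K_{\ba,\bb}^m(\Rd)}^2.
\end{align*}
Taking square roots and absorbing $(l+1)^{d/2}$ together with the maximal constant into a single $C_{\ba,l,m,d}$ gives the claim. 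The argument is essentially bookkeeping, and there is no genuine obstacle beyond two verifications: that every $\bk$ in the Korobov sum obeys $0\leq\bk\leq m$ (secured by $l\leq m$), and that the exponent monotonicity above holds. Both are immediate, which is why I expect the proof to be short.
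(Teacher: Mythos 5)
Your proof is correct and takes essentially the approach the paper intends: Corollary \ref{coro2.1} is stated without an explicit proof, as an immediate consequence of Theorem \ref{regular HC}. Your expansion of the Korobov norm \eqref{Koborov norm}, the term-by-term application of Theorem \ref{regular HC} to the $(l+1)^d$ multi-indices $\bk$ with $|\bk|_\infty\leq l\leq m$, and the monotonicity $N^{|\bk|_\infty-m}\leq N^{l-m}$ (valid since $N>m^d\geq1$) supply exactly the bookkeeping the paper leaves implicit.
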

\begin{remark}
	Recall that $M=\textup{card}(\bO_N)=\mathcal{O}(N(\ln{N})^{d-1})\leq CN^{1+\epsilon(d-1)}$, for arbitrary small $\epsilon>0$. Then
\begin{align*}
	\left|\left|P_N^{\ba,\bb}u-u\right|\right|_{\K^l_{\ba,\bb}(\Rd)}\leq 
C_{\ba,l,m,d}M^{\frac{l-m}{2(1+\epsilon(d-1))}}|u|_{\K^m_{\ba,\bb}(\Rd)},\quad\forall\, 0\leq l\leq m,
\end{align*}
where $C_{\ba,l,m,d}$ is some constant depending on $\ba$, $l$, $m$ and $d$. It is clear to see that the convergence rate deteriorates slightly with increasing $d$. 
\end{remark}

\subsubsection{OHC approximation}

In order to completely break the curse of dimensionality, we consider the index set introduced in \cite{GH}
\begin{align}\label{OHC index set}
	\bO_{N,\g}:=\{\bn\in\Nd:\ |\bn|_{\textup{mix}}|\bn|_\infty^{-\g}\leq N^{1-\g}\},\quad -\infty\leq\g<1.
\end{align}
The cardinality of $\bO_{N,\g}$ is $\mathcal{O}(N)$, for $\g\in(0,1)$, where the dependence of dimension is in the big-O, see \cite{GH}. The family of spaces are defined as
\begin{align}\label{OHC}
	X_{N,\g}^{\ba,\bb}:=\textup{span}\{\bH_{\bn}^{\ba,\bb}:\ \bn\in\bO_{N,\g}\}.
\end{align}
\begin{remark}
	In particular, we have $X_{N,0}^{\ba,\bb}=X_N^{\ba,\bb}$ in RHC \eqref{RHC}, and $X_{N,-\infty}^{\ba,\bb}=\textup{span}\{\H_{\bn}^{\ba,\bb}:\,|\bn|_{\infty}\leq N\}$, i.e. the full grid. 
\end{remark}
We denote the projection operator as $P_{N,\g}^{\ba,\bb}:\ L^2(\Rd)\rightarrow X_{N,\g}^{\ba,\bb}$. In this case, the $\bk-$complement of index set of $\bO_{N,\g}$ is 
\begin{align}\label{k complement of mix}
	\bO_{N,\g,\bk}^c=\{\bn\in\Nd:\ \bn\in\bO_{N,\g}^c\ \textup{and}\ \bn\geq\bk\},\quad\forall\,\bk\in\Nd.
\end{align}

Although \cite{SW} obtains the similar result for Jacobi polynomials as Theorem \ref{thm-OHC} below, we believe that there is a gap in their error analysis of OHC, namely Theorem 2.3, \cite{SW}. We circumvent it with more delicate analysis.
\begin{theorem}\label{thm-OHC}
	For any $u\in\K_{\ba,\bb}^m(\Rd)$, $d\geq2$, and $0\leq|\bl|_1<m$, 
	\begin{align}\label{estimate of thm2.3}
		\left|\left|\bD_{\bx}^{\bl}\left(P_{N,\g}^{\ba,\bb}u-u\right)\right|\right|
	\leq C_{\ba,\bl,m,d,\g}|u|_{\K_{\ba,\bb}^m(\Rd)}
		\left\{\begin{aligned}
			N^{\frac{|\bl|_1-m}2},\quad&\textup{if}\ 0<\g\leq\frac {|\bl|_1}m\\
			N^{\frac{(1-\g)[|\bl|_1-(d-1)m]}{d-1-\g}},\quad&\textup{if}\ \frac{|\bl|_1}m\leq \g<1,
		\end{aligned}\right.
	\end{align}
where $C_{\ba,\bl,m,d,\g}$ is some constant depending on $\ba$, $\bl$, $m$, $d$ and $\g$. In particular, if $\ba=\bm{1}$, then
\begin{equation*}
	C_{\bm{1},\bl,m,d,\g}=m^{dm-|\bl|_1}
	\left\{\begin{aligned}
		2^{|\bl|_\infty-m}m^{\frac{(d-1)(\g m-|\bl|_1)}{1-\g}},&\quad\textup{if}\ 0<\g\leq\frac{|\bl|_1}m\\
		2^{|\bl|_1-dm},&\quad\textup{if}\ \frac{|\bl|_1}m\leq\g<1.
	\end{aligned}\right.
\end{equation*}
\end{theorem}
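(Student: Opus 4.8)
The plan is to follow the two-step decomposition used in the proof of Theorem~\ref{regular HC}, replacing the regular-cross constraint $|\bn|_{\textup{mix}}>N$ by the optimized constraint $|\bn|_{\textup{mix}}|\bn|_\infty^{-\g}>N^{1-\g}$ defining $\bO_{N,\g}^c$. Starting from \eqref{PN in frequency} and \eqref{norm in frequency}, I would write
\begin{align*}
	\left\|\bD_{\bx}^{\bl}(P_{N,\g}^{\ba,\bb}u-u)\right\|^2=\sum_{\bn\in\bO_{N,\g}^c}\bm{\mu}_{\bn,\bl}\left|\hat{u}_{\bn}^{\ba,\bb}\right|^2,
\end{align*}
and split the sum into $\rom{2}_1$, over those $\bn\in\bO_{N,\g,m}^c$ with all $n_j\geq m$, and $\rom{2}_2$, over $\bn\in\bO_{N,\g,\bl}^c\setminus\bO_{N,\g,m}^c$. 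The piece $\rom{2}_2$ is handled exactly as in Theorem~\ref{regular HC}: the splitting of the indices into $\cN$ and $\cN^c$ as in \eqref{cN and cNc} produces a multi-index $\bk$ with $|\bk|_\infty=m$, so that $\sum\bm{\mu}_{\bn,\bk}|\hat{u}_{\bn}^{\ba,\bb}|^2\leq|u|_{\K_{\ba,\bb}^m(\Rd)}^2$ and the whole estimate reduces to bounding $\max\{\bm{\mu}_{\bn,\bl}/\bm{\mu}_{\bn,\bk}\}$ over the OHC complement. Thus in both pieces the problem collapses, after factoring out the explicit constants $2^{|\bl|_1-|\bk|_1}$ and $\prod_j\a_j^{2(l_j-m)}$, to the single task of maximizing $\prod_j n_j^{l_j-m}$ over $\{\bn:\ |\bn|_{\textup{mix}}|\bn|_\infty^{-\g}>N^{1-\g}\}$.

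The heart of the argument is this constrained maximization, and here I would depart from the direct chain \eqref{thm2.2_eq1}--\eqref{thm2.2_eq3} of the RHC proof, where one simply uses $|\bn|_{\textup{mix}}>N$. My plan is to single out the coordinate $j_0$ realizing $|\bn|_\infty$ and rewrite the constraint as $\prod_{j\neq j_0}n_j>(N/n_{j_0})^{1-\g}$. For fixed $n_{j_0}$ this is a regular-cross constraint in the remaining $d-1$ variables, so applying the estimate of Theorem~\ref{regular HC} in dimension $d-1$ (after discarding the harmless side condition $n_j\leq n_{j_0}$, which only enlarges the feasible set) controls $\prod_{j\neq j_0}n_j^{l_j-m}$ by a power of $(N/n_{j_0})^{1-\g}$. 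Substituting this back leaves a one-parameter problem in $n_{j_0}=|\bn|_\infty$, whose maximizer is dictated by the sign of the resulting exponent of $n_{j_0}$; this exponent changes sign precisely at the threshold $\g=|\bl|_1/m$, which is exactly what separates the two regimes in \eqref{estimate of thm2.3}. When $0<\g\leq|\bl|_1/m$ the optimum is at the lower end $n_{j_0}\sim m$ and yields $N^{(|\bl|_1-m)/2}$; when $|\bl|_1/m\leq\g<1$ it is driven to a balanced configuration of the $d-1$ coordinates and produces $N^{(1-\g)[|\bl|_1-(d-1)m]/(d-1-\g)}$. Tracking the constants through this optimization gives the explicit $C_{\bm{1},\bl,m,d,\g}$ when $\ba=\bm{1}$.

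The main obstacle — and the point where I expect the gap in Theorem~2.3 of \cite{SW} to lie — is exactly the non-smooth coupling between $|\bn|_{\textup{mix}}$ and $|\bn|_\infty$ in the OHC constraint. One cannot merely replace $|\bn|_{\textup{mix}}$ by $N$ as in the RHC case, since the extremal index may have either one dominant coordinate or several comparably large ones, and deciding which configuration is extremal in each $\g$-regime is the delicate step. I would resolve this by the reduction above together with a careful case analysis confirming that the two candidate profiles (a single dominant coordinate versus a balanced block of $d-1$ coordinates) genuinely dominate, and by treating the boundary situations $n_j=m$ and $|\bn|_\infty\gtrsim N$ separately so that the continuous optimization is legitimate on the discrete index set $\Nd$. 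Combining the resulting bounds for $\rom{2}_1$ and $\rom{2}_2$ and retaining the larger of the two powers of $N$ then yields \eqref{estimate of thm2.3}.
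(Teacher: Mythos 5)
Your skeleton --- the split over $\bO_{N,\g,m}^c$ and $\bO_{N,\g,\bl}^c\setminus\bO_{N,\g,m}^c$, the extraction of the explicit constants, and the reduction of both pieces to maximizing $\prod_{j}n_j^{l_j-m}$ over the OHC complement --- is exactly the paper's, and your diagnosis that the delicacy lies in the coupling of $|\bn|_{\textup{mix}}$ with $|\bn|_\infty$ is correct. The gap is in the device you propose for that maximization, and it sits precisely in the step you call harmless. If you fix $n_{j_0}=|\bn|_\infty=t$, drop the cap $n_j\leq t$, and apply the regular-cross bound of Theorem \ref{regular HC} in the remaining $d-1$ variables, the inner maximum is attained by dumping the whole constraint mass $(N/t)^{1-\g}$ onto the single coordinate with the largest $l_j$; that configuration violates the discarded cap and in fact lies outside $\bO_{N,\g}^c$ altogether, so your bound is driven by spurious indices. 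Concretely, take $d=3$, $m=2$, $\bl=(1,0,0)$, $\g=\tfrac12=|\bl|_1/m$, where the theorem's squared-norm rate is $N^{|\bl|_1-m}=N^{-1}$. In your branch $j_0=2$ with $t$ at its feasible minimum $N^{(1-\g)/(d-\g)}=N^{1/5}$, the inner RHC bound permits $(n_1,n_3)\approx(N^{2/5},m)$, giving $n_1^{-1}n_2^{-2}n_3^{-2}\approx N^{-2/5}\cdot N^{-2/5}=N^{-4/5}$; but this index has $|\bn|_{\textup{mix}}|\bn|_\infty^{-\g}\approx N^{3/5}\cdot N^{-1/5}=N^{2/5}<N^{1/2}$, so it does not belong to the complement at all, and the genuine maximum of $n_1^{-1}n_2^{-2}n_3^{-2}$ over $\bO_{N,\g,m}^c$ is of order $N^{-1}$. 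Your route can therefore only prove $N^{-4/5}$ where $N^{-1}$ is asserted, and no subsequent case analysis can repair this, because the loss is incurred inside the inner maximization. A second symptom of the same problem: the threshold you predict from the sign change of the $t$-exponent is not $\g=|\bl|_1/m$ in general --- for $\bl=l\cdot\bm{1}$ that exponent equals $l-m-(1-\g)(l-m)=\g(l-m)<0$ for every $\g\in(0,1)$, so no sign change occurs there, yet the theorem's regimes still split at $\g=dl/m$.

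The paper never splits off the maximal coordinate. It keeps all coordinates coupled through $\prod_{j=1}^d n_j^{l_j-m}\leq|\bn|_\infty^{|\bl|_1}|\bn|_{\textup{mix}}^{-m}$ (note: $|\bl|_1$, not a max over $j$), and then writes the right-hand side as a product of \emph{nonnegative} powers of the quantities the constraint actually controls: $|\bn|_\infty^{\g}/|\bn|_{\textup{mix}}<N^{-(1-\g)}$ on the complement (\eqref{thm2.3_eq2}), combined with $|\bn|_\infty/|\bn|_{\textup{mix}}\leq m^{-(d-1)}$ (\eqref{thm2.3_eq3}) in the first regime, or with $|\bn|_\infty>N^{(1-\g)/(d-\g)}$ (\eqref{thm2.3_eq7}) in the second; the threshold $\g=|\bl|_1/m$ is exactly the condition under which the chosen factorization has exponents of admissible sign. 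Finally, your plan to treat the low-index piece ``exactly as in Theorem \ref{regular HC}'' also undersells it: there one must pass to the reduced index $\tilde{\bn}$ supported on $\cN^c$, with the weakened constraint $m^{d-1}|\tilde{\bn}|_{\textup{mix}}|\tilde{\bn}|_\infty^{-\g}>N^{1-\g}$ and effective dimension at most $d-1$ (\eqref{thm2.3_eq8}--\eqref{thm2.3_eq11}); this is precisely where the denominator $d-1-\g$ and the factor $(d-1)m$ in the second regime of \eqref{estimate of thm2.3} come from.
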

\begin{proof}
	As argued in the proof of Theorem \ref{regular HC}, we arrive
\begin{align}\label{thm2.3_eq0}\notag
	\left|\left|\bD_{\bx}^{\bl}\left(P_{N,\g}^{\ba,\bb}u-u\right)\right|\right|^2
		\leq&\max_{\bn\in\bO_{N,\g,m}^c}\left\{\frac{\bm{\mu}_{\bn,\bl}}{\bm{\mu}_{\bn,m}}\right\}\sum_{\bn\in\bO_{N,\g,m}^c}\bm{\mu}_{\bn,m}\left|\hat{u}_{\bn}^{\ba,\bb}\right|^2\\\notag
		&+\max_{\bn\in\bO_{N,\g,\bl}^c\setminus\bO_{N,\g,m}^c}\left\{\frac{\bm{\mu}_{\bn,\bl}}{\tilde{\bm{\mu}}_{\bn,\bl,m}}\right\}\sum_{\bn\in\bO_{N,\g,\bl}^c\setminus\bO_{N,\g,m}^c}\tilde{\bm{\mu}}_{\bn,\bl,m}\left|\hat{u}_{\bn}^{\ba,\bb}\right|^2\\
		:=&\rom{3}_1+\rom{3}_2,
\end{align}
where $\tilde{\bm{\mu}}_{\bn,\bl,m}$ is defined as in \eqref{thm2.2_eq2}. To estimate $\rom{3}_1$, like in \eqref{thm2.2_eq1}, we have
\begin{align}\label{thm2.3_eq1}\notag
	\frac{\bm{\mu}_{\bn,\bl}}{\bm{\mu}_{\bn,m}}
		=&2^{|\bl|_1-dm}\prod_{j=1}^d\a_j^{2(l_j-m)}\prod_{j=1}^d\left(1-\frac{l_j}{n_j}\right)^{-1}\cdots\left(1-\frac{m-1}{n_j}\right)^{-1}\prod_{j=1}^dn_j^{l_j-m}\\
		:=& D_1\prod_{j=1}^dn_j^{l_j-m}.
\end{align}
The estimate of $\max_{\bn\in\bO_{N,\g,m}^c}D_1$ is followed by the similar argument in \eqref{thm2.2_eq3}, i.e.,
\begin{align}\label{thm2.3_D1}
	\max_{\bn\in\bO_{N,\g,m}^c}D_1\leq \left(\frac m2\right)^{dm-|\bl|_1}\prod_{j=1}^d\a_j^{2(l_j-m)}.
\end{align}
Notice that for any $\bn\in\bO_{N,\g}^c$, 
\begin{align}\label{thm2.3_eq2}
	|\bn|_{\textup{mix}}|\bn|_\infty^{-\g}>N^{1-\g}\Rightarrow\left(\frac{|\bn|_\infty^{\g}}{|\bn|_{\textup{mix}}}\right)^{\frac1{1-\g}}<\frac1N
\end{align} 
and furthermore, if $\bn\in\bO_{N,\g,m}^c$,
\begin{align}\label{thm2.3_eq3}
	\frac{|\bn|_\infty}{|\bn|_{\textup{mix}}}\leq\frac1{m^{d-1}}.
\end{align}
Moreover, 
\begin{align}\label{thm2.3_eq7}
	|\bn|_\infty^{d-\g}\geq|\bn|_{\textup{mix}}|\bn|_\infty^{-\g}>N^{1-\g}
		\Rightarrow|\bn|_\infty>N^{\frac{1-\g}{d-\g}}.
\end{align}
Let us estimate the product on the right-hand side of \eqref{thm2.3_eq1}:
\begin{align}\label{thm2.3_eq4}
	\prod_{j=1}^dn_j^{l_j-m}
		=\left(\prod_{j=1}^dn_j^{l_j}\right)\left(\prod_{j=1}^dn_j\right)^{-m}
		\leq\left(\prod_{j=1}^d|\bn|_\infty^{l_j}\right)|\bn|_{\textup{mix}}^{-m}
		=|\bn|_\infty^{|\bl|_1}|\bn|_{\textup{mix}}^{-m}.
\end{align}
If $0<\g\leq\frac{|\bl|_1}m$, then
\begin{align}\label{thm2.3_eq5}\notag
	\max_{\bn\in\bO_{N,\g,m}^c}\prod_{j=1}^dn_j^{l_j-m}
		&\overset{\eqref{thm2.3_eq4}}\leq\max_{\bn\in\bO_{N,\g,m}^c}\left\{\left(\frac{|\bn|_\infty^{\g}}{|\bn|_{\textup{mix}}}\right)^{\frac{m-|\bl|_1}{1-\g}}\left(\frac{|\bn|_\infty}{|\bn|_{\textup{mix}}}\right)^{\frac{|\bl|_1-\g m}{1-\g}}\right\}\\
		&\overset{\eqref{thm2.3_eq2},\eqref{thm2.3_eq3}}<m^{\frac{(d-1)(\g m-|\bl|_1)}{1-\g}}N^{|\bl|_1-m}.
\end{align}
Otherwise, if $\frac{|\bl|_1}m\leq\g<1$, then
\begin{align}\label{thm2.3_eq6}
	\max_{\bn\in\bO_{N,\g,m}^c}\prod_{j=1}^dn_j^{l_j-m}
		\overset{\eqref{thm2.3_eq4}}\leq& \max_{\bn\in\bO_{N,\g,m}^c}\left\{\left(\frac{|\bn|_\infty^{\g}}{|\bn|_{\textup{mix}}}\right)^m|\bn|_\infty^{|\bl|_1-\g m}\right\}
		\overset{\eqref{thm2.3_eq2},\eqref{thm2.3_eq7}}\leq N^{\frac{1-\g}{d-\g}(|\bl|_1-\g m)-(1-\g)m}.
\end{align}
Combine \eqref{thm2.3_D1}, \eqref{thm2.3_eq5} and \eqref{thm2.3_eq6}, the first term on the right-hand side of \eqref{thm2.3_eq0} has the upper bound
\begin{align}\label{thm2.3_III1}
		\rom{3}_1\leq\left(\frac m2\right)^{dm-|\bl|_1}\prod_{j=1}^d\a_j^{2(l_j-m)}\left|\left|\bD_{\bx}^{m\cdot\bm{1}}u\right|\right|^2
		\left\{\begin{aligned}
			m^{\frac{(d-1)(\g m-|\bl|_1)}{1-\g}}N^{|\bl|_1-m},\quad&\textup{if}\ 0<\g\leq\frac {|\bl|_1}m\\
			N^{\frac{1-\g}{d-\g}(|\bl|_1-\g m)-(1-\g)m},\quad&\textup{if}\ \frac{|\bl|_1}m\leq \g<1.
		\end{aligned}\right.
\end{align}
Next, we consider $\rom{3}_2$. Define $\cN$ and $\cN^c$ as in \eqref{cN and cNc}. Like in \eqref{thm2.2_II2}, we obtain that
\begin{align}\label{thm2.3_eq15}
	\rom{3}_2\leq \max_{\bn\in\bO_{N,\g,\bl}^c\setminus\bO_{N,\g,m}^c}\left\{\frac{\bm{\mu}_{\bn,\bl}}{\bm{\mu}_{\bn,\bk}}\right\}|u|_{\K_{\ba,\bb}^m(\Rd)}^2.
\end{align}
We need to estimate the maximum similarly as in \eqref{thm2.2_eq5}:
\begin{align}\label{thm2.3_eq12}
	\frac{\bm{\mu}_{\bn,\bl}}{\bm{\mu}_{\bn,\bk}}
		=2^{|\bl|_1-|\bk|_1}\prod_{j\in\cN^c}\a_j^{2(l_j-m)}\prod_{j\in\cN^c}n_j^{l_j-m}\prod_{j\in\cN^c}\left(1-\frac{l_j}{n_j}\right)^{-1}\cdots\left(1-\frac{m-1}{n_j}\right)^{-1}
		:=D_2\prod_{j\in\cN^c}n_j^{l_j-m}.
\end{align}
Similar argument as in \eqref{thm2.2_eq3} yields that
\begin{align}\label{thm2.3_D2}
	\max_{\bn\in\bO_{N,\g,\bl}^c\setminus\bO_{N,\g,m}^c}D_2	
		\leq 2^{|\bl|_1-|\bk|_1}\prod_{j\in\cN^c}\a_j^{2(l_j-m)}m^{dm-\left|\tilde{\bl}\right|_1},
\end{align}
where
\begin{align}\label{thm2.3_tildevector}
	\tilde{\bl}=(l_1,\cdots,l_d)=
	\left\{\begin{aligned}
		l_j,\quad&\textup{if}\ j\in\cN^c\\
		0,\quad&\textup{otherwise}.
	\end{aligned}\right.
\end{align}
And it is verified that
\begin{align}\label{thm2.3_eq9}
	\prod_{j\in\cN^c}n_j^{l_j-m}
		\leq\left(\prod_{j\in\cN^c}|\tilde{\bn}|_\infty^{l_j}\right)\left(\prod_{j\in\cN^c} n_j\right)^{-m}
		= |\tilde{\bn}|_\infty^{\left|\tilde{\bl}\right|_1}|\tilde{\bn}|^{-m}_{\textup{mix}}
		\leq|\tilde{\bn}|_\infty^{|\bl|_1}|\tilde{\bn}|_{\textup{mix}}^{-m},
\end{align}
where $\tilde{\bn}$ is defined similarly as $\tilde{\bl}$ in \eqref{thm2.3_tildevector}. With similar argument as in \eqref{thm2.3_eq2}, we deduce that for any $\bn\in\bO_{N,\g}^c$,
\begin{align}\label{thm2.3_eq8}
	N^{1-\g}<|\bn|_{\textup{mix}}|\bn|_\infty^{-\g}\leq m^{d-1}|\tilde{\bn}|_{\textup{mix}}|\tilde{\bn}|_\infty^{-\g}
		\Rightarrow \left(\frac{|\tilde{\bn}|_\infty^{\g}}{|\tilde{\bn}|_{\textup{mix}}}\right)^{\frac1{1-\g}}<m^{\frac{d-1}{1-\g}}N^{-1}.
\end{align}
And similarly as in \eqref{thm2.3_eq3}, we have for any $\bn\in\bO_{N,\g,m}^c$,
\begin{align}\label{thm2.3_eq10}
	\frac{|\tilde{\bn}|_\infty}{|\tilde{\bn}|_{\textup{mix}}}\leq \frac1{m^{d-2}},
\end{align}
and 
\begin{align}\label{thm2.3_eq11}
	N^{1-\g}\overset{\eqref{thm2.3_eq10}}<m^{d-1}|\tilde{\bn}|_{\textup{mix}}|\tilde{\bn}|_\infty^{-\g}
		\leq m^{d-1}|\tilde{\bn}|_\infty^{d-1-\g}
	\Rightarrow |\tilde{\bn}|_\infty>\left(\frac{N^{1-\g}}{m^{d-1}}\right)^{\frac1{d-1-\g}}.
\end{align}
If $0<\g\leq\frac{|\bl|_1}m$, then
\begin{align}\label{thm2.3_eq13}\notag
	\max_{\bn\in\bO_{N,\g,\bl}^c\setminus\bO_{N,\g,m}^c}\prod_{j\in\cN^c}n_j^{l_j-m}
		&\overset{\eqref{thm2.3_eq9}}<\max_{\bn\in\bO_{N,\g,\bl}^c\setminus\bO_{N,\g,m}^c}\left\{\left(\frac{|\tilde{\bn}|_\infty^{\g}}{|\tilde{\bn}|_{\textup{mix}}}\right)^{\frac{m-|\bl|_1}{1-\g}}\left(\frac{|\tilde{\bn}|_\infty}{|\tilde{\bn}|_{\textup{mix}}}\right)^{\frac{|\bl|_1-\g m}{1-\g}}\right\}\\
		&\overset{\eqref{thm2.3_eq8},\eqref{thm2.3_eq10}}\leq m^{\frac1{1-\g}\{[(\g+1)d-(2\g+1)]m-(2d-3)|\bl|_1\}}N^{|\bl|_1-m}.
\end{align}
Otherwise, if $\frac{|\bl|_1}m\leq \g<1$, then
\begin{align}\label{thm2.3_eq14}\notag
	\max_{\bn\in\bO_{N,\g,\bl}^c\setminus\bO_{N,\g,m}^c}\prod_{j\in\cN^c}n_j^{l_j-m}
		&\overset{\eqref{thm2.3_eq9}}<\max_{\bn\in\bO_{N,\g,\bl}^c\setminus\bO_{N,\g,m}^c}\left\{\left(\frac{|\tilde{\bn}|_\infty^{\g}}{|\tilde{\bn}|_{\textup{mix}}}\right)^m|\tilde{\bn}|_\infty^{|\bl|_1-\g m}\right\}\\
		&\overset{\eqref{thm2.3_eq8},\eqref{thm2.3_eq11}}\leq m^{(d-1)\left[m-\frac{|\bl|_1-\g m}{d-1-\g}\right]}N^{\frac{(1-\g)[|\bl|_1-(d-1)m]}{d-1-\g}}.
\end{align}
Combine \eqref{thm2.2_eq5}, \eqref{thm2.3_eq15}, \eqref{thm2.3_D2}, \eqref{thm2.3_eq13} and \eqref{thm2.3_eq14}, we arrive
\begin{align}\label{thm2.3_III2}
	\rom{3}_2\leq& 2^{|\bl|_\infty-m}\prod_{j\in\cN^c}\a_j^{2(l_j-m)}m^{dm-|\bl|_1}|u|_{\K_{\ba,\bb}^m(\Rd)}^2\\\notag
	&\left\{\begin{aligned}
		m^{\frac1{1-\g}\{[(\g+1)d-(2\g+1)]m-(2d-3)|\bl|_1\}}N^{|\bl|_1-m},\quad&\textup{if}\ 0<\g\leq\frac{|\bl|_1}m\\
		m^{(d-1)\left[m-\frac{|\bl|_1-\g m}{d-1-\g}\right]}N^{\frac{(1-\g)[|\bl|_1-(d-1)m]}{d-1-\g}},\quad&\textup{if}\ \frac{|\bl|_1}m\leq\g<1.
	\end{aligned}\right.
\end{align}
Therefore, the desired result follows immediately from \eqref{thm2.3_III1} and \eqref{thm2.3_III2}.
\end{proof}

\begin{corollary}\label{coro2.2}
	For any $u\in\K_{\ba,\bb}^m(\Rd)$, $0\leq l<m$, and $0<\g\leq\frac lm$,
	\begin{align*}
		\left|\left|P_{N,\g}^{\ba,\bb}u-u\right|\right|_{\W_{\ba,\bb}^l(\Rd)}
			\leq C_{\ba,\bl,m,d,\g}N^{\frac{l-m}2}|u|_{\K_{\ba,\bb}^m(\Rd)}.
	\end{align*}
where $C_{\ba,\bl,m,d,\g}$ is some constant depending on $\ba$, $\bl$, $m$, $d$ and $\g$.
\end{corollary}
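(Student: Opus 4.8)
The plan is to estimate the squared Sobolev norm termwise via
$\|P_{N,\g}^{\ba,\bb}u-u\|_{\W_{\ba,\bb}^l(\Rd)}^2=\sum_{0\le|\bk|_1\le l}\|\bD_{\bx}^{\bk}(P_{N,\g}^{\ba,\bb}u-u)\|^2$,
applying Theorem \ref{thm-OHC} with $\bl=\bk$ to each summand. The hypothesis $0\le|\bk|_1<m$ of that theorem holds because $|\bk|_1\le l<m$, and since only finitely many $\bk$ satisfy $|\bk|_1\le l$, I can take the largest of the resulting constants and absorb the number of such multi-indices at the very end. The one point needing care is that the threshold $|\bk|_1/m$ separating the two branches of Theorem \ref{thm-OHC} varies with $\bk$, whereas $\g$ is fixed with $0<\g\le l/m$.

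For the summands with $|\bk|_1\ge\g m$ one has $\g\le|\bk|_1/m$, so the first branch applies and yields the per-term rate $N^{(|\bk|_1-m)/2}$; as $|\bk|_1\le l$ and this exponent is negative, it is at most $N^{(l-m)/2}$, which is the target rate. The main obstacle is the complementary range $|\bk|_1<\g m$, where the second branch of Theorem \ref{thm-OHC} governs the decay. Here I would use that the second-branch exponent is an increasing affine function of $|\bk|_1$ (its slope is a positive multiple of $(1-\g)/(d-1-\g)>0$ for $d\ge2$, $\g<1$), so it suffices to bound it at the threshold value $|\bk|_1=\g m$. Evaluating there and invoking $\g m\le l$ (from $\g\le l/m$) together with $l<m$ shows that this boundary value does not exceed $(l-m)/2$; monotonicity then forces every summand with $|\bk|_1<\g m$ to decay at least as fast as $N^{(l-m)/2}$ as well.

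With each of the finitely many terms bounded by $C\,N^{(l-m)/2}|u|_{\K_{\ba,\bb}^m(\Rd)}$, I would sum the squares, pull out the combinatorial count of indices $\{\bk:|\bk|_1\le l\}$ and the worst constant $C_{\ba,\bk,m,d,\g}$, and take a square root to reach the stated estimate. The delicate part is thus entirely the low-order range $|\bk|_1<\g m$: the value of the second-branch exponent at the common threshold $|\bk|_1=\g m$ is exactly what the hypotheses $\g\le l/m$ and $l<m$ are tailored to control, ensuring that these lower-order derivative contributions never dominate the top-order rate $N^{(l-m)/2}$.
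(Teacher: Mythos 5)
Your proof is correct and follows the route the paper intends: Corollary \ref{coro2.2} is stated without proof as an immediate consequence of Theorem \ref{thm-OHC}, obtained exactly as you do by summing the theorem's bound over all $\bk$ with $|\bk|_1\le l$ and distinguishing which branch applies. Your treatment of the low-order range $|\bk|_1<\g m$ — using that the second-branch exponent is increasing in $|\bk|_1$ and evaluating it at the threshold $|\bk|_1=\g m$, where it equals $(\g-1)m\le l-m<\frac{l-m}{2}$ — correctly supplies the one detail the paper leaves implicit.
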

\begin{remark}
	Due to the fact that $M=\textup{card}(\bO_{N,\g})=\mathcal{O}(N)\leq CN$, we obtain that
\begin{align*}
		\left|\left|P_{N,\g}^{\ba,\bb}u-u\right|\right|_{\W_{\ba,\bb}^l(\Rd)}
			\leq C_{\ba,\bl,m,d,\g}M^{\frac{l-m}2}|u|_{\K_{\ba,\bb}^m(\Rd)}.
	\end{align*}
where $C_{\ba,\bl,m,d,\g}$ is some constant depending on $\ba$, $\bl$, $m$, $d$ and $\g$. It is clear to see that the convergence rate does not deteriorate with respect to $d$ anymore. The effect of the dimension goes into the constant in front.
\end{remark}

\section{Application to linear parabolic PDE}

\setcounter{equation}{0}
\setcounter{theorem}{0}

In this section, we shall study the Galerkin HSM with the HC approximation applying to high dimensional linear parabolic PDE. Let us consider the linear parabolic PDE of the general form:
\begin{align}\label{PDE}
	\left\{\begin{aligned}
		\p_tu(\bx)+Lu(\bx)=&f(\bx,t),\quad\bx\in\Rd,\ t\in[0,T]\\
		u(\bx,0)=&u_0(\bx),
	\end{aligned}\right.
\end{align}
where 
\begin{align}\label{L}
	Lu=-\nabla\cdot(\bm{A}\nabla u)+\bm{b}\cdot\nabla u+cu,
\end{align}
with $\bm{A}=(a_{ij})_{i,j=1}^d:\ \Rd\mapsto\R^{d\times d}$, $\bm{b}=(b_i)_{i=1}^d:\ \Rd\mapsto\Rd$ and $c:\ \Rd\mapsto\R$. The aim of HSM is to find $u_N\in X$, such that
\begin{align}\label{HSM}
	\langle\p_tu_N,\varphi\rangle -\A(u_N,\varphi)=\langle f,\varphi\rangle,\quad\forall\,\varphi\in X,
\end{align}
where $X$ is some approximate space, and $\A(u,v)$ is a bilinear form given by
\begin{align}\label{bilinear form}
	\A(u,v)=\int_{\Rd}(\nabla u)^T\bm{A}\nabla v+v\bm{b}\cdot\nabla u+cuv\ d\bx.
\end{align}
In our content, $X$ could be chosen as $X_N^{\ba,\bb}$, $X_{N,\g}^{\ba,\bb}$ in the previous section.

To guarantee the existence and regularity of the solution to \eqref{PDE}, we assume that 
\begin{enumerate}
	\item[$\bm{(C_1)}$] The bilinear form is continuous, i.e., there is a constant $C>0$ such that
\begin{align}\label{continuous}
	|\A(u,v)|\leq C||u||_{H_0^1(\Rd)}||v||_{H_0^1(\Rd)},\quad\forall\, u,v\in H_0^1(\Rd).
\end{align}
	\item[$\bm{(C_2)}$] The bilinear form is coercive, i.e., there exists some $c>0$ such that 
\begin{align}\label{coercive}
	\A(u,u)\geq c||u||^2_{H_0^1(\Rd)},\quad\forall\, u\in H_0^1(\Rd).
\end{align}
	\item[$\bm{(C_3)}$] The coefficients $a_{ij}$, $b_i$ and $c$ are smooth.
\end{enumerate}
Here, $H_0^1(\Rd)$ denotes the normal Sobolev space with the functions decaying to zero at infinity. More generally, $H_0^m(\mathbb{R}^d)$ is defined as, for any $u\in H^m(\Rd)$, it satisfies $|u|\rightarrow0$, as $|\bx|\rightarrow\infty$ and 
\begin{align}\label{H-space}
	||u||_{H^m(\Rd)}^2=\sum_{0\leq|\bk|_1\leq m}\left|\left|\bp_{\bx}^{\bk}u\right|\right|^2<\infty.
\end{align}

Let us first show some relation between the Sobolev-type space $W_{\ba,\bb}^l(\Rd)$ (see \eqref{Sobolev-type space}) and the normal Sobolev space $H^l(\Rd)$.
\begin{lemma}\label{lemma}
	For $u\in\W_{\ba,\bb}^{|\bk|_1+|\br|_1}(\Rd)$, for any $\br,\bk\in\Nd$, we have
\begin{align*}	
	\left|\left|\bx^{\br}\bp_{\bx}^{\bk}u\right|\right|\lesssim\left(\prod_{i=1}^d\a_i^{-r_i}\right)|\bk+\br|_{\textup{mix}}^{\frac12}\cdot||u||_{\W^{|\bk|_1+|\br|_1}_{\ba,\bb}(\Rd)}.
\end{align*}
\end{lemma}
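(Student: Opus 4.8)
The plan is to pass to the frequency representation furnished by the orthonormal basis $\{\bH_\bn^{\ba,\bb}\}$ and to trade the bare operators $x_i$ and $\p_{x_i}$ for the ladder operators that diagonalize this basis. Writing $u=\sum_\bn\hat u_\bn^{\ba,\bb}\bH_\bn^{\ba,\bb}$ and exploiting the tensor structure $\bx^\br\bp_\bx^\bk=\prod_{i=1}^d x_i^{r_i}\p_{x_i}^{k_i}$ together with $\bH_\bn^{\ba,\bb}=\prod_i\H_{n_i}^{\a_i,\b_i}$, it is enough to understand the one-dimensional action of $x^r\p_x^k$ on a single $\H_n^{\a,\b}$ and then reassemble the $L^2(\Rd)$ norm by Parseval.

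The algebraic heart is the operator $\D_x=\p_x+\a^2(x-\b)$ of \eqref{Derivative} and its $L^2$-adjoint $\D_x^{*}=-\p_x+\a^2(x-\b)$. From \eqref{recurrence for RT hermite function}--\eqref{derivative} one checks that $\D_x$ lowers the index, $\D_x\H_n^{\a,\b}=\sqrt{\lambda_n}\,\H_{n-1}^{\a,\b}$, while $\D_x^{*}$ raises it, $\D_x^{*}\H_n^{\a,\b}=\sqrt{\lambda_{n+1}}\,\H_{n+1}^{\a,\b}$, so that $\p_x=\tfrac12(\D_x-\D_x^{*})$ and $x=\b+\tfrac1{2\a^2}(\D_x+\D_x^{*})$. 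Substituting these into $x^r\p_x^k$ and multiplying out produces a finite sum of ``words'' of length at most $k+r$ in the letters $\D_x,\D_x^{*}$, each carried by an explicit scalar built from powers of $\tfrac12$, of $\tfrac1{2\a^2}$ (this is precisely where the negative powers of $\a$ are generated), and of $\b$. Each such word sends $\H_n^{\a,\b}$ to a scalar multiple of a single $\H_{n+s}^{\a,\b}$ with $|s|\le k+r$, and that scalar is a product of at most $k+r$ factors $\sqrt{\lambda_m}$ with $m$ ranging over indices between $n$ and $n+k+r$; since $\lambda_m=2\a^2m$ is increasing, each factor is controlled by $\sqrt{\lambda_{n+k+r}}$, and the word scalar is comparable to $\sqrt{\mu_{n,k+r}}$ up to a constant depending only on $k+r$.

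With the one-dimensional bounds in hand I reassemble. Because the index shift $s_i$ in each coordinate is confined to $|s_i|\le k_i+r_i$, squaring $\bx^\br\bp_\bx^\bk u$ and integrating leaves, after Parseval, a sum over $\bn$ of $|\hat u_\bn^{\ba,\bb}|^2$ weighted by squared coefficients; a Cauchy--Schwarz over the $\prod_i\bigl(2(k_i+r_i)+1\bigr)$ possible shift patterns collapses the cross terms and is exactly what manufactures a factor comparable to $|\bk+\br|_{\textup{mix}}^{1/2}$. What remains is, up to the accumulated scalar prefactors, the quantity $\sum_\bn\bm{\mu}_{\bn,\bk+\br}|\hat u_\bn^{\ba,\bb}|^2=\|\bD_\bx^{\bk+\br}u\|^2$ by \eqref{norm in frequency}, which, since $|\bk+\br|_1=|\bk|_1+|\br|_1$, is one of the summands in \eqref{sobolev norm} and hence at most $\|u\|_{\W_{\ba,\bb}^{|\bk|_1+|\br|_1}(\Rd)}^2$. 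Tracking the accumulated powers of $\tfrac1{2\a^2}$ against this top-order seminorm yields the declared factor $\prod_i\a_i^{-r_i}$, while all remaining dependence on $\bk,\br,\bb,d$ is absorbed into the implied constant.

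The step I expect to fight with is the coefficient estimate, and in particular its uniformity in $n$ at the low modes. For $n<k+r$ one has $\mu_{n,k+r}=0$ (and $\D_x\H_0^{\a,\b}=0$), so the bands truncate and the naive bound ``word scalar $\lesssim\sqrt{\mu_{n,k+r}}$'' cannot hold term by term; one must use the exact vanishing $\D_x\H_0^{\a,\b}=0$ to kill the offending words, or treat the finitely many low modes separately against the full sum $\sum_{0\le j\le k+r}\mu_{n,j}$ that the $\W_{\ba,\bb}$-norm controls. A second nuisance is the noncommutativity $[\D_x,\D_x^{*}]=2\a^2$: when the words are normal-ordered it spawns strictly shorter words, which feed only into lower-order $\D_x$-seminorms and are therefore dominated by the same $\W_{\ba,\bb}$-norm. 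Keeping track of these lower-order contributions cleanly, and pinning down the exact power of each $\a_i$ together with the combinatorial constant, is where the bulk of the technical labor lies.
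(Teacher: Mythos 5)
Your proposal follows essentially the same route as the paper's proof: the paper likewise expands $u$ in the Hermite basis, uses the recurrences \eqref{recurrence for RT hermite function} and \eqref{derivative} (your ladder operators $\D_x,\D_x^{*}$ encode exactly these relations) to write $x^r\p_x^k\H_n^{\a,\b}$ as a band of width $2(k+r)$ with coefficients $\eta_{n,i}\lesssim\sqrt{\mu_{n,k+r}}$, and then applies Parseval together with the same Cauchy--Schwarz over the $O(|\bk+\br|_{\textup{mix}})$ shifts before tensorizing to $d$ dimensions. The low-mode truncation you flag is real, and it is precisely why the final estimate is stated against the full norm $||u||_{\W^{|\bk|_1+|\br|_1}_{\ba,\bb}(\Rd)}$ rather than a top-order seminorm, which is also how the paper absorbs it.
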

\begin{proof}
For clarity, we show it holds for $d=1$ in detail.
\begin{align}\label{lemma3.1_eq0}
	\left|\left|x^r\p_x^ku\right|\right|^2
		=\left|\left|\sum_{n=0}^\infty\hat{u}_n^{\a,\b}x^r\p_x^k\H_n^{\a,\b}(x)\right|\right|^2
		\overset{\eqref{recurrence for RT hermite function},\eqref{derivative}}=\a^{-2r}\left|\left|\sum_{n=0}^\infty\hat{u}_n^{\a,\b}\sum_{i=-(k+r)}^{k+r}\eta_{n,i}\H_{n+i}^{\a,\b}(x)\right|\right|^2,
\end{align}
where, for each $n$, $\eta_{n,i}$ is a product of $k+r$ factors of $\left(\pm\frac{\sqrt{\lambda_{n+i}}}2\right)$ or $\frac\b2$ with $-(k+r)\leq i\leq k+r$. Notice that 
\begin{align}\label{lemma3.1_eq3}
	\lambda_{n+i}\sim\lambda_{n+j},
\end{align}
provided that $\lambda_{n+i},\lambda_{n+j}\neq0$, for all $-(k+r)\leq i,j\leq k+r$.
In fact, it is equivalent to show that $\lambda_n\sim\lambda_{n+l}$, for all $0\leq l\leq 2(k+r)$. By convention, $\lambda_n=0$, if $n\leq0$. Notice that
\begin{align*}
	\frac{\lambda_n}{\lambda_{n+l}}=\frac n{n+l}\leq1\quad
\textup{and}\quad \frac n{n+l}\geq\frac1{1+l}\geq\frac1{1+2(k+r)}, \ \forall n\geq1.
\end{align*}
Meanwhile $\lim_{n\rightarrow\infty}\frac n{n+l}=1$, for all $0\leq l\leq2(k+r)$. Therefore, $\frac{\lambda_n}{\lambda_{n+l}}\sim1$. Hence, $\eta_{n,j}\lesssim\sqrt{\mu_{n,k+r}}$, by \eqref{mu part}, \eqref{lemma3.1_eq3}. Thus, 
\begin{align}\label{lemma3.1_eq1}\notag
	\left|\left|x^r\p_x^ku\right|\right|^2\overset{\eqref{lemma3.1_eq0}}\sim&\a^{-2r}\left|\left|\sum_{n=0}^\infty\hat{u}_n^{\a,\b}\sqrt{\mu_{n,k+r}}\sum_{i=-(k+r)}^{k+r}\H_{n+i}^{\a,\b}(x)\right|\right|^2\\
		=&\a^{-2r}\sum_{n=0}^\infty\hat{u}_n^{\a,\b}\sqrt{\mu_{n,k+r}}\sum_{i=-(k+r)}^{k+r}\sum_{l=0}^\infty\hat{u}_l^{\a,\b}\sqrt{\mu_{l,k+r}}\left\langle\H_{n+i}^{\a,\b}(x),\sum_{j=-(k+r)}^{k+r}\H_{l+j}^{\a,\b}(x)\right\rangle.
\end{align}
It is clear that the scalar product in \eqref{lemma3.1_eq1} is nonzero only if $l=n+i-j$. And $\mu_{n,k+r}\sim\mu_{n+i-j,k+r}$, for all $-(k+r)\leq i,j\leq k+r$. It can be verified by \eqref{mu part} and \eqref{lemma3.1_eq3}. Therefore,
\begin{align*}
	\left|\left|x^r\p_x^ku\right|\right|^2		\overset{\eqref{lemma3.1_eq1}}\sim&\a^{-2r}\sum_{n=0}^\infty\mu_{n,k+r}\hat{u}_n^{\a,\b}\sum_{\tilde{l}=-2(k+r)}^{2(k+r)}\hat{u}_{n+\tilde{l}}^{\a,\b}
	\leq\a^{-2r}\sum_{n=0}^\infty\mu_{n,k+r}\sum_{\tilde{l}=-2(k+r)}^{2(k+r)}\left|\hat{u}_n^{\a,\b}\right|\left|\hat{u}_{n+\tilde{l}}^{\a,\b}\right|\\
\leq&\a^{-2r}\sum_{n=0}^\infty\mu_{n,k+r}\frac12\sum_{\tilde{l}=-2(k+r)}^{2(k+r)}\left(\left|\hat{u}_n^{\a,\b}\right|^2+\left|\hat{u}_{n+\tilde{l}}^{\a,\b}\right|^2\right)\\
	=&\a^{-2r}\sum_{n=0}^\infty\mu_{n,k+r}\left[2(k+r)\left|\hat{u}_n^{\a,\b}\right|^2+\frac12\sum_{\tilde{l}=-2(k+r)}^{2(k+r)}\left|\hat{u}_{n+\tilde{l}}^{\a,\b}\right|^2\right]\\
	=&2(k+r)\a^{-2r}\sum_{n=0}^\infty\mu_{n,k+r}\left|\hat{u}_n^{\a,\b}\right|^2+\frac12\a^{-2r}\sum_{\tilde{n}=0}^\infty\sum_{\tilde{l}=-2(k+r)}^{2(k+r)}\mu_{\tilde{n}-\tilde{l},k+r}\left|\hat{u}_{\tilde{n}}^{\a,\b}\right|^2\\
	\sim&\a^{-2r}4(k+r)\sum_{n=0}^\infty\mu_{n,k+r}\left|\hat{u}_n^{\a,\b}\right|^2
	\lesssim\a^{-2r}(k+r)||u||^2_{\W_{\a,\b}^{k+r}(\R)}.
\end{align*}
Till now, we have shown that \eqref{lemma3.1_eq2} holds for $d=1$. For $d\geq2$, we shall proceed the argument similarly as for $d=1$.
\begin{align*}
	\left|\left|\bx^{\br}\bp_{\bx}^{\bk}u\right|\right|^2
		=&\left(\prod_{\tilde{i}=1}^d\a_{\tilde{i}}^{-2r_{\tilde{i}}}\right)\left|\left|\sum_{\bn\in\Nd}\hat{u}_{\bn}^{\ba,\bb}\sum_{-(\bk+\br)\leq\bi\leq\bk+\br}\bm{\eta}_{\bn,\bi}\bH_{\bn+\bi}^{\ba,\bb}(\bx)\right|\right|^2\\
		\sim&\left(\prod_{\tilde{i}=1}^d\a_{\tilde{i}}^{-2r_{\tilde{i}}}\right)\left|\left|\sum_{\bn\in\Nd}\hat{u}_{\bn}^{\ba,\bb}\sqrt{\bm{\mu}_{\bn,\bk+\br}}\sum_{-(\bk+\br)\leq\bi\leq(\bk+\br)}\bH_{\bn+\bi}^{\ba,\bb}(\bx)\right|\right|^2\\
\lesssim&\left(\prod_{\tilde{i}=1}^d\a_{\tilde{i}}^{-2r_{\tilde{i}}}\right)\sum_{\bn\in\Nd}\bm{\mu}_{\bn,\bk+\br}\sum_{-2(\bk+\br)\leq\tilde{\bl}\leq2(\bk+\br)}\left(\left|\hat{u}_{\bn}^{\ba,\bb}\right|^2+\left|\hat{u}_{\bn+\tilde{\bl}}^{\ba,\bb}\right|^2\right)\\
		\sim&\left(\prod_{\tilde{i}=1}^d\a_{\tilde{i}}^{-2r_{\tilde{i}}}\right)|\bk+\br|_{\textup{mix}}\sum_{\bn\in\Nd}\bm{\mu}_{\bn,\bk+\br}\left|\hat{u}_{\bn}^{\ba,\bb}\right|^2\\
	\lesssim&\left(\prod_{\tilde{i}=1}^d\a_{\tilde{i}}^{-2r_{\tilde{i}}}\right)|\bk+\br|_{\textup{mix}}\cdot||u||^2_{\W_{\ba,\bb}^{|\bk|_1+|\br|_1}(\Rd)}.
\end{align*} 
Therefore, we obtain the desired result.
\end{proof}

\begin{corollary}\label{coro}
	For $u\in\W_{\ba,\bb}^m(\Rd)$, we have
	$||u||_{H^m(\Rd)}\lesssim||u||_{\W^m_{\ba,\bb}(\Rd)}$, for all $m\geq0$.
\end{corollary}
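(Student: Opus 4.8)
The plan is to read off the corollary from Lemma~\ref{lemma} by specializing the weight multi-index to $\br=\bm0$ and then summing over all derivative orders. First I would recall that, by \eqref{H-space}, the $H^m(\Rd)$-norm is simply the $\ell^2$-aggregate of the ordinary partial derivatives $\bp_{\bx}^{\bk}u$ over all $\bk$ with $0\le|\bk|_1\le m$, so it is enough to bound each $\|\bp_{\bx}^{\bk}u\|$ individually by the $\W^m_{\ba,\bb}(\Rd)$-norm.

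The key step is to invoke Lemma~\ref{lemma} with $\br=\bm0$. Since then $\prod_{i=1}^d\a_i^{-r_i}=1$, $|\bk+\br|_{\textup{mix}}=|\bk|_{\textup{mix}}$, and $|\bk|_1+|\br|_1=|\bk|_1$, the lemma yields at once
\begin{align*}
    \left\|\bp_{\bx}^{\bk}u\right\|\lesssim|\bk|_{\textup{mix}}^{\frac12}\,\|u\|_{\W^{|\bk|_1}_{\ba,\bb}(\Rd)}
\end{align*}
for every $\bk$ with $|\bk|_1\le m$. Because the defining sum \eqref{sobolev norm} for $\|u\|_{\W^{|\bk|_1}_{\ba,\bb}(\Rd)}$ ranges over a subset of the indices appearing in $\|u\|_{\W^{m}_{\ba,\bb}(\Rd)}$ whenever $|\bk|_1\le m$, the $\W$-norm is monotone in its order, so $\|u\|_{\W^{|\bk|_1}_{\ba,\bb}(\Rd)}\le\|u\|_{\W^{m}_{\ba,\bb}(\Rd)}$; in particular the hypothesis $u\in\W^{|\bk|_1}_{\ba,\bb}(\Rd)$ of the lemma is supplied by $u\in\W^m_{\ba,\bb}(\Rd)$.

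Finally I would square and sum, obtaining
\begin{align*}
    \|u\|_{H^m(\Rd)}^2=\sum_{0\le|\bk|_1\le m}\left\|\bp_{\bx}^{\bk}u\right\|^2\lesssim\Big(\sum_{0\le|\bk|_1\le m}|\bk|_{\textup{mix}}\Big)\|u\|_{\W^m_{\ba,\bb}(\Rd)}^2,
\end{align*}
and the bracketed prefactor is a finite constant depending only on $m$ and $d$ (finitely many multi-indices satisfy $|\bk|_1\le m$, and each satisfies $|\bk|_{\textup{mix}}=\prod_j\max\{1,k_j\}\le m^d$). Absorbing it into the implied constant gives $\|u\|_{H^m(\Rd)}\lesssim\|u\|_{\W^m_{\ba,\bb}(\Rd)}$.

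I do not expect a genuine obstacle here: all the analytic content sits in Lemma~\ref{lemma}, and the corollary is merely its $\br=\bm0$ specialization combined with a trivial summation. The only facts needing a sentence of justification are the monotonicity of the $\W$-norm in its order and the finiteness of the combinatorial prefactor, both immediate from the definition \eqref{sobolev norm} and the convention $|\bk|_{\textup{mix}}=\prod_j\max\{1,k_j\}$.
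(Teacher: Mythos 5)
Your proof is correct and is essentially the paper's own argument: both reduce the corollary to Lemma \ref{lemma} with $\br=\bm{0}$ and then sum over the finitely many multi-indices $\bk$ with $0\le|\bk|_1\le m$. The only cosmetic difference is that the paper invokes the sharper termwise bound $\left|\left|\bp_{\bx}^{\bk}u\right|\right|^2\lesssim|\bk|_{\textup{mix}}\left|\left|\bD_{\bx}^{\bk}u\right|\right|^2$ implicit in the lemma's proof, whereas you use the lemma exactly as stated together with the monotonicity $||u||_{\W^{|\bk|_1}_{\ba,\bb}(\Rd)}\le||u||_{\W^{m}_{\ba,\bb}(\Rd)}$; both yield the same conclusion.
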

\begin{proof}
	Compared the definitions of $W_{\ba,\bb}^m(\Rd)$ and $H^m(\Rd)$ in \eqref{sobolev norm} and \eqref{H-space}, it remains to show that 
\begin{align}\label{lemma3.1_eq2}
	\left|\left|\bp_{\bx}^{\bk}u\right|\right|^2\lesssim\left|\left|\bD_{\bx}^{\bk}u\right|\right|^2\overset{\eqref{norm in frequency}}=\sum_{\bn\in\Nd}\bm{\mu}_{\bn,\bk}\left|\hat{u}_{\bn}^{\ba,\bb}\right|^2,
\end{align}
for all $0\leq|\bk|_1\leq m$. The desired result is followed immediately from Lemma \ref{lemma} by letting $\br=\bm{0}$, i.e., 
\begin{align*}
	\left|\left|\bp_{\bx}^{\bk}u\right|\right|^2\lesssim|\bk|_{\textup{mix}}\cdot\left|\left|\bD_{\bx}^{\bk}u\right|\right|^2.
\end{align*}
\end{proof}
The convergence rate of the HSM with the HC approximation under the assumptions $\bm{(C_1)}$-$\bm{(C_3)}$ is:
\begin{theorem}\label{thm1}
	Assume that conditions $\bm{(C_1)}$-$\bm{(C_3)}$ are satisfied, and the solution $u\in L^\infty(0,T;\K_{\ba,\bb}^m(\Rd))\cap L^2(0,T;\K_{\ba,\bb}^m(\Rd))$, for $m>1$. Let $u_N$ be the approximate solution obtained by HSM \eqref{HSM}, then
	\begin{align*}
		||u-u_N||(t)\lesssim c^*N^{\frac{1-m}2},
	\end{align*}
where $c^*$ depends on $\ba$, the norms of $L^2(0,T;\K_{\ba,\bb}^m(\Rd))$ and  $L^\infty(0,T;\K_{\ba,\bb}^m(\Rd))$.
\end{theorem}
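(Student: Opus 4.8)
The plan is to follow the classical Galerkin error analysis for parabolic problems: split the error through the $L^2$-orthogonal projection and then close an energy estimate for the non-projected part. First I would let $P_N$ abbreviate the projection $P_N^{\ba,\bb}$ (for the RHC space) or $P_{N,\g}^{\ba,\bb}$ (for the OHC space) onto $X$, and decompose
\[
	u - u_N = (u - P_N u) + (P_N u - u_N) =: \rho + \theta, \qquad \theta \in X .
\]
The piece $\rho$ is the projection error, which is controlled directly by Corollary \ref{coro2.1} (RHC) or Corollary \ref{coro2.2} (OHC); the piece $\theta\in X$ must be handled by energy methods. Choosing the discrete initial datum as $u_N(0)=P_N u_0$ makes $\theta(0)=0$.

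Next I would derive the error equation. Since $X\subset H_0^1(\Rd)$, testing the weak form of \eqref{PDE} and the scheme \eqref{HSM} against the same $\varphi\in X$ and subtracting gives the Galerkin orthogonality $\langle \partial_t(u-u_N),\varphi\rangle + \A(u-u_N,\varphi)=0$ for all $\varphi\in X$. Taking $\varphi=\theta$ and inserting $u-u_N=\rho+\theta$ yields
\[
	\tfrac12\tfrac{d}{dt}\|\theta\|^2 + \A(\theta,\theta) = -\langle \partial_t\rho,\theta\rangle - \A(\rho,\theta) .
\]
The crucial simplification is that $\langle \partial_t\rho,\theta\rangle=0$: because $X$ is fixed in time, $\partial_t\rho=(I-P_N)\partial_t u$ is $L^2$-orthogonal to $X\ni\theta$, so the term involving $\partial_t\rho$ drops out and no bound on a time derivative of $u$ is needed. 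Applying the coercivity $\bm{(C_2)}$ on the left, and the continuity $\bm{(C_1)}$ together with Young's inequality to $\A(\rho,\theta)$ on the right, absorbs the $\|\theta\|_{H_0^1}$ factor and leaves $\frac{d}{dt}\|\theta\|^2 + c\|\theta\|^2_{H_0^1} \lesssim \|\rho\|^2_{H_0^1}$.

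Then I would integrate in time from $\theta(0)=0$ (invoking Gr\"onwall's inequality if a G\aa rding-type lower-order term must be carried along), obtaining $\|\theta(t)\|^2\lesssim \int_0^t\|\rho(s)\|^2_{H_0^1}\,ds$. To convert the right-hand side into the weighted norms, I would use Corollary \ref{coro} to pass from $H^1$ to $\W^1_{\ba,\bb}$, the inclusion $\K^1_{\ba,\bb}\subset\W^1_{\ba,\bb}$ from the Remark following \eqref{koborov space}, and then Corollary \ref{coro2.1}/\ref{coro2.2} with $l=1<m$ to get $\|\rho(s)\|_{H_0^1}\lesssim N^{\frac{1-m}2}|u(s)|_{\K^m_{\ba,\bb}}$. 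Integrating against the $L^2(0,T;\K^m_{\ba,\bb})$ norm gives $\|\theta(t)\|\lesssim N^{\frac{1-m}2}\|u\|_{L^2(0,T;\K^m_{\ba,\bb})}$, whereas the $L^2$ projection error bounded pointwise in time (Corollary \ref{coro2.1}/\ref{coro2.2} with $l=0$) gives $\|\rho(t)\|\lesssim N^{-\frac m2}\|u\|_{L^\infty(0,T;\K^m_{\ba,\bb})}$, which is of strictly higher order. Combining $\|u-u_N\|(t)\le\|\rho(t)\|+\|\theta(t)\|$ delivers the claimed rate, with $c^*$ collecting the two temporal norms together with the constants coming from $\ba$, $l$, $m$, $d$ (and $\g$).

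I expect the main obstacle to be twofold. The rate is governed by the $H^1$ rather than the $L^2$ projection error, which is exactly why the exponent degrades from $-\tfrac m2$ to $\tfrac{1-m}2$ and why $m>1$ is required so that $l=1$ is admissible in the approximation theorems. The second delicate point is that $\bm{(C_1)}$–$\bm{(C_2)}$ are stated in the standard $H_0^1(\Rd)$ norm while the approximation estimates live in the weighted spaces $\W_{\ba,\bb}^l$ and $\K_{\ba,\bb}^l$; bridging the two through Lemma \ref{lemma} and Corollary \ref{coro}, and tracking how the scaling $\ba$ enters the constants, is the technical heart of making the energy argument produce precisely the weighted-norm right-hand side.
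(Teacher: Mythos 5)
Your proposal is correct and follows essentially the same route as the paper's proof: the same splitting $u-u_N=(u-P_Nu)+(P_Nu-u_N)$, the same key observation that $\partial_t(u-P_Nu)$ is $L^2$-orthogonal to the (time-independent) approximation space, the same energy estimate via coercivity, continuity and Young's inequality, and the same conversion of the $H_0^1$ projection error into the weighted norms through Corollary \ref{coro} and Corollaries \ref{coro2.1}/\ref{coro2.2}, ending with the triangle inequality and the $L^\infty$/$L^2$-in-time norms. The only differences are cosmetic (testing with $\theta$ versus $2(U_N-u_N)$, and your explicit remark that $u_N(0)=P_Nu_0$ gives $\theta(0)=0$, which the paper uses implicitly).
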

\begin{proof}
	For the notational convenience, we denote $U_N=P_N^{\ba,\bb}u$. It is readily verified that
\begin{align}\label{thm3.1_eq1}
	\langle\p_t(u-U_N),\varphi\rangle=0\quad
	\Rightarrow\quad\langle\p_tU_N,\varphi\rangle=\langle-Lu+f,\varphi\rangle,\quad\forall\, \varphi\in X_N^{\ba,\bb}.
\end{align}
Combined with the formulation of Hermite spectral method \eqref{HSM}, we have
\begin{align*}
	\langle\p_t(U_N-u_N),\varphi\rangle
		=&\langle-Lu+f,\varphi\rangle+\A(u_N,\varphi)+\langle f,\varphi\rangle
		=\A(u_N-u,\varphi)\\
		=&-\A(u-U_N,\varphi)-\A(U_N-u_N,\varphi),\quad\forall\,\varphi\in X_N^{\ba,\bb}.
\end{align*}
Take $\varphi=2(U_N-u_N)\in X_N^{\ba,\bb}$, then
\begin{align*}
	\p_t||U_N-u_N||^2
		=&-2\A(u-U_N,U_N-u_N)-2\A(U_N-u_N,U_N-u_N)\\
		\overset{\eqref{continuous},\eqref{coercive}}\leq&2C||u-U_N||_{H^1_0(\Rd)}||U_N-u_N||_{H^1_0(\Rd)}-2c||U_N-u_N||_{H_0^1(\Rd)}^2\\
		\lesssim&||u-U_N||_{H^1_0(\Rd)}^2,\quad\textup{by\ Young's\ inequality}.
\end{align*}
With Corollary \ref{coro} and Corollary \ref{coro2.2} ( if OHC approximation is considered), we have
\begin{align*}
	&\p_t||U_N-u_N||^2\lesssim||u-U_N||^2_{\W_{\ba,\bb}^1(\Rd)}
		\lesssim N^{1-m}|u|^2_{\K_{\ba,\bb}^m(\Rd)}\\
	\Rightarrow\quad&||U_N-u_N||^2(t)\lesssim N^{\frac{1-m}2}\left[\int_0^t|u|_{\K_{\ba,\bb}^m(\Rd)}^2(s)ds\right]^{\frac12}.
\end{align*}
The same estimate holds for RHC approximation with Corollary \ref{coro2.2} replaced by Corollary \ref{coro2.1}. And then, it yields that 
\begin{align*}
	||u-u_N||(t)\leq&||u-U_N||(t)+||U_N-u_N||(t)\\
		\lesssim&N^{-\frac m2}|u|_{\K_{\ba,\bb}^m(\Rd)}(t)+N^{\frac{1-m}2}\left[\int_0^t|u|_{\K_{\ba,\bb}^m(\Rd)}^2(s)ds\right]^{\frac12}
		\lesssim c^*N^{\frac{1-m}2},
\end{align*}
where $c^*$ depends on $\ba$, the norms of $L^2(0,T;\K_{\ba,\bb}^m(\Rd))$ and  $L^\infty(0,T;\K_{\ba,\bb}^m(\Rd))$.
\end{proof}

However, the assumptions $\bm{(C_1)}$ and $\bm{(C_2)}$ are not easy to verify. In the sequel, we make assumptions on the operator $L$ and the convergence rate of the HSM is investigated under the conditions below. Assume that
\begin{enumerate}
	\item[$\bm{(C_4)}$] The operator $L$ (c.f. \eqref{L}) is strongly elliptic and uniformly bounded, i.e.,
\begin{align*}
	\sum_{i,j=1}^da_{ij}(\bx)\xi_i\xi_j\geq\theta|\xi|^2,\quad\forall \xi\in\Rd,\quad
	\textup{and}\quad	||\bm{A}||_\infty=\max_{i,j=1,\cdots,d}{||a_{ij}||_\infty}<\infty,
\end{align*}
for $\bx\in\Rd$, where $\theta>0$.
	\item[$\bm{(C_5)}$] There exists some constant $C>0$, such that
\begin{align*}
	c(\bx)-\frac12 \nabla\cdot \bm{b}(\bx)\geq-C,
\end{align*}
for all $\bx\in\Rd$.
	\item[$\bm{(C_6)}$] There exist some integer indices  $\bg,\bd\in\Nd$, such that
\begin{align*}
	c(\bx)\lesssim 1+\bx^{2\bg}\quad\textup{and}\quad
	b_i(\bx)\lesssim 1+\bx^{2\bd},\quad\forall\, i=1,2,\cdots,d,
\end{align*}
for all $\bx\in\Rd$.
\end{enumerate}
\begin{theorem}
	Assume that conditions $\bm{(C_3)}$-$\bm{(C_6)}$ are satisfied and the solution to the equation \eqref{PDE} $u\in L^2(0,T;\K_{\ba,\bb}^m(\Rd))$, for some integer $m>\max\{|\bg|_1,|\bd|_1+1\}$, and let $u_N$ be the approximate solution obtained by HSM \eqref{HSM}, then 
\begin{align*}
	||u-u_N||(t)\lesssim c^{\sharp}N^{\frac{\max\{|\bg|_1,|\bd|_1+1\}-m}2},
\end{align*}
where $c^{\sharp}$ depends on $\ba$, $T$ and the norm of $L^2(0,T;\K_{\ba,\bb}^m(\Rd))$.
\end{theorem}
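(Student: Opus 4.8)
The plan is to run the energy argument of Theorem~\ref{thm1} with the continuity and coercivity hypotheses $\bm{(C_1)}$--$\bm{(C_2)}$ replaced by a G\aa rding-type inequality distilled from $\bm{(C_4)}$--$\bm{(C_5)}$, and with the polynomial growth of the coefficients permitted by $\bm{(C_6)}$ absorbed through Lemma~\ref{lemma}. As before I would set $U_N=P_N^{\ba,\bb}u$ (or $P_{N,\g}^{\ba,\bb}u$ in the OHC case), $w=U_N-u_N$ and $e=u-U_N$. Since $U_N$ is the $L^2$-projection onto the time-independent space $X$, we have $\langle\p_t e,\varphi\rangle=0$ for all $\varphi\in X$, and combining this with \eqref{HSM} exactly as in the proof of Theorem~\ref{thm1} and testing against $\varphi=2w$ gives
\begin{align*}
	\p_t\|w\|^2+2\A(w,w)=-2\A(e,w).
\end{align*}

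First I would extract coercivity up to the growing potential. Integrating the drift term by parts (the smoothness $\bm{(C_3)}$ making $\nabla\cdot\bm b$ meaningful), $\int_{\Rd}w\,\bm b\cdot\nabla w\,d\bx=-\tfrac12\int_{\Rd}(\nabla\cdot\bm b)w^2\,d\bx$, so that
\begin{align*}
	\A(w,w)=\int_{\Rd}(\nabla w)^T\bm A\nabla w\,d\bx+\int_{\Rd}\Big(c-\tfrac12\nabla\cdot\bm b\Big)w^2\,d\bx.
\end{align*}
By strong ellipticity in $\bm{(C_4)}$ the first integral dominates $\theta\|\nabla w\|^2$, and by $\bm{(C_5)}$ I would write $c-\tfrac12\nabla\cdot\bm b=\rho-C$ with $\rho\geq0$, keeping the nonnegative weighted term $\int_{\Rd}\rho\,w^2\,d\bx$ on the left and sending $-C\|w\|^2$ to the Gronwall side. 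Retaining this weighted term, rather than bounding it below by $-C\|w\|^2$ as one would under $\bm{(C_2)}$, is what will let the growing potential pay for itself.

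The core is the estimate of $\A(e,w)$, where the unbounded coefficients enter. The principal part is harmless: $|\int_{\Rd}(\nabla e)^T\bm A\nabla w\,d\bx|\leq\|\bm A\|_\infty\|\nabla e\|\,\|\nabla w\|$ by the uniform bound in $\bm{(C_4)}$, and Young's inequality absorbs the $\|\nabla w\|$ factor into $\theta\|\nabla w\|^2$, leaving $\|\nabla e\|^2\lesssim\|e\|^2_{\W^1_{\ba,\bb}(\Rd)}$. For the potential and drift parts I would apply a \emph{weighted} Cauchy--Schwarz that splits each coefficient evenly,
\begin{align*}
	\Big|\int_{\Rd}c\,e\,w\,d\bx\Big|\leq\big\|\,|c|^{1/2}e\,\big\|\,\big\|\,|c|^{1/2}w\,\big\|,\qquad
	\Big|\int_{\Rd}w\,b_i\,\p_{x_i}e\,d\bx\Big|\leq\big\|\,|b_i|^{1/2}\p_{x_i}e\,\big\|\,\big\|\,|b_i|^{1/2}w\,\big\|,
\end{align*}
and then use $\bm{(C_6)}$ in the form $\||c|^{1/2}e\|^2\lesssim\|e\|^2+\|\bx^{\bg}e\|^2$ and $\||b_i|^{1/2}\p_{x_i}e\|^2\lesssim\|\p_{x_i}e\|^2+\|\bx^{\bd}\p_{x_i}e\|^2$. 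Lemma~\ref{lemma} converts the nontrivial $e$-factors into Sobolev norms of the sharp order, $\|\bx^{\bg}e\|\lesssim\|e\|_{\W^{|\bg|_1}_{\ba,\bb}(\Rd)}$ and $\|\bx^{\bd}\p_{x_i}e\|\lesssim\|e\|_{\W^{|\bd|_1+1}_{\ba,\bb}(\Rd)}$, which is exactly where the exponent $\max\{|\bg|_1,|\bd|_1+1\}$ comes from, rather than the $2|\bg|_1,\,2|\bd|_1+1$ one would obtain by placing the entire weight on $e$. The $w$-factor $\||c|^{1/2}w\|^2=\int_{\Rd}|c|w^2\,d\bx$ is, up to $\|w\|^2$, the weighted energy $\int_{\Rd}\rho\,w^2\,d\bx$ kept on the left, so Young's inequality absorbs it there.

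Finally I would invoke Corollary~\ref{coro2.1} (or Corollary~\ref{coro2.2} in the OHC case) to bound each projection-error Sobolev norm by $N^{(l-m)/2}|u|_{\K^m_{\ba,\bb}(\Rd)}$ with $l=\max\{|\bg|_1,|\bd|_1+1\}<m$, which is precisely the use of the hypothesis $m>\max\{|\bg|_1,|\bd|_1+1\}$; the resulting differential inequality $\p_t\|w\|^2\lesssim\|w\|^2+N^{l-m}|u|^2_{\K^m_{\ba,\bb}(\Rd)}$ closes by Gronwall's lemma on $[0,T]$, and the triangle inequality together with $\|e\|\lesssim N^{-m/2}|u|_{\K^m_{\ba,\bb}(\Rd)}$ yields the stated bound. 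The main obstacle is the absorption of the $w$-side weighted factors. For the potential term it is clean, because the same weight $|c|$ sits both in the coercive energy and in the cross term; but the drift cross term produces $\||b_i|^{1/2}w\|^2=\int_{\Rd}|b_i|w^2\,d\bx$, whose weight $|b_i|\sim\bx^{2\bd}$ is not controlled by $\rho$ coming from $c-\tfrac12\nabla\cdot\bm b$ unless the drift growth is dominated by the potential growth (e.g.\ $|\bd|_1\le|\bg|_1$). Establishing this domination under $\bm{(C_4)}$--$\bm{(C_6)}$ alone, or else trading the drift weight against $\|\nabla w\|$ via a further integration by parts, is the delicate point on which the sharp order rests, and is where I would concentrate the effort.
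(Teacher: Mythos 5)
Your skeleton agrees with the paper's up to the decisive step: the same error equation for $w=U_N-u_N$, the same integration by parts giving $\A(w,w)=\int_{\Rd}(\nabla w)^T\bm{A}\nabla w\,d\bx+\int_{\Rd}\bigl(c-\tfrac12\nabla\cdot\bm{b}\bigr)w^2\,d\bx$, the same use of $\bm{(C_4)}$--$\bm{(C_5)}$, and Gronwall plus the triangle inequality at the end. Where you diverge, the paper does \emph{not} split the weights: it estimates the cross terms by plain Cauchy--Schwarz, $\bigl|\int_{\Rd}c\,e\,w\,d\bx\bigr|\le\|ce\|\,\|w\|$ and $\bigl|\int_{\Rd}w\,\bm{b}\cdot\nabla e\,d\bx\bigr|\le\|\bm{b}\cdot\nabla e\|\,\|w\|$, so the \emph{entire} coefficient lands on the projection error $e$; coercivity is used only in the crude form $\A(w,w)\ge\theta\|\nabla w\|^2-C\|w\|^2$ (the nonnegative weighted part is discarded, not retained); hence no absorption of weighted $w$-terms is ever needed. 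Then $\|ce\|^2\lesssim\|e\|^2+\|\bx^{2\bg}e\|^2$ and $\|\bm{b}\cdot\nabla e\|^2\lesssim\|e\|^2_{\W_{\ba,\bb}^1(\Rd)}+\sum_i\|\bx^{2\bd}\p_{x_i}e\|^2$ are handled by Lemma \ref{lemma}, Corollary \ref{coro2.1} or \ref{coro2.2} is applied, and Gronwall closes the argument.

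This difference is exactly where your proposal has its gap, and the gap is genuine. Retaining $\int_{\Rd}\rho\,w^2\,d\bx$ with $\rho=c-\tfrac12\nabla\cdot\bm{b}+C\ge0$ helps only if the weights created by your split Cauchy--Schwarz are dominated by $\rho$. For the drift term this requires $\bx^{2\bd}\lesssim\rho+1$, which nothing in $\bm{(C_4)}$--$\bm{(C_6)}$ supplies --- you concede this yourself. But the potential term is not ``clean'' either: the coercive weight is $c-\tfrac12\nabla\cdot\bm{b}$, not $|c|$, and $|c|\lesssim\bigl(c-\tfrac12\nabla\cdot\bm{b}\bigr)+C'$ fails in general, since $\bm{(C_5)}$ and $\bm{(C_6)}$ are one-sided bounds and place no upper control on $\nabla\cdot\bm{b}$ or on the negative part of $c$. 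So neither absorption can be justified under the stated hypotheses and your argument does not close. A remark worth keeping in mind: your half-and-half splitting is the natural attempt to genuinely reach the exponent $\max\{|\bg|_1,|\bd|_1+1\}$ stated in the theorem, and your observation about the alternative is accurate --- placing the whole weight on $e$, as the paper does, means Lemma \ref{lemma} must be applied with $\br=2\bg$ (resp.\ $\br=2\bd$, $\bk=\bm{e}_i$), which produces $\|e\|_{\W_{\ba,\bb}^{2|\bg|_1}(\Rd)}$ and $\|e\|_{\W_{\ba,\bb}^{2|\bd|_1+1}(\Rd)}$, whereas the paper records the indices $|\bg|_1$ and $|\bd|_1+1$ at that step. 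As written, the paper's own route therefore supports the rate $N^{(\max\{2|\bg|_1,2|\bd|_1+1\}-m)/2}$ (under $m>\max\{2|\bg|_1,2|\bd|_1+1\}$) rather than the stated one; your route would deliver the stated sharp rate, but only under an additional domination hypothesis such as $|c|+\bx^{2\bd}\lesssim1+c-\tfrac12\nabla\cdot\bm{b}$, which is not among $\bm{(C_3)}$--$\bm{(C_6)}$.
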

\begin{proof}
	Similarly as we did in the proof of Theorem \ref{thm1}, denote $U_N=P_N^{\ba,\bb}u$ for convenience, and let $\varphi=2(U_N-u_N)\in X_N^{\ba,\bb}$, then
\begin{align}\label{thm2_eq0}
	\p_t||U_N-u_N||^2
		=-2\A(u-U_N,U_N-u_N)-2\A(U_N-u_N,U_N-u_N):=\rom{5}_1+\rom{5}_2,
\end{align}
where $\A$ is defined in \eqref{bilinear form}. For $\rom{5}_2$, 
\begin{align}\label{V2}\notag
	-\frac12\rom{5}_2
	&=\int_{\Rd}(\nabla(U_N-u_N))^T\bm{A}(\nabla(U_N-u_N))
		+\int_{\Rd}(U_N-u_N)\bm{b}\cdot\nabla(U_N-u_N)
		+\int_{\Rd}c(U_N-u_N)^2\\\notag
	&=\int_{\Rd}(\nabla(U_N-u_N))^T\bm{A}(\nabla(U_N-u_N))
		+\int_{\Rd}\left(c-\frac12\nabla\cdot\bm{b}\right)(U_N-u_N)^2\\
	&\overset{\bm{(C_4)}, \bm{(C_5)}}\geq\theta||\nabla(U_N-u_N)||^2-C||U_N-u_N||^2.
\end{align}
Meanwhile for $\rom{5}_1$, 
\begin{align}\label{V1}\notag
	|\rom{5}_1|
=&2\left[\int_{\Rd}(\nabla(u-U_N))^T\bm{A}(\nabla(U_N-u_N))
	+\int_{\Rd}(U_N-u_N)\bm{b}\cdot\nabla(u-U_N)\right.\\\notag
	&\phantom{\frac12[}\left.+\int_{\Rd}c(u-U_N)(U_N-u_N)\right]\\\notag
\leq&2[||\bm{A}||_\infty||\nabla(u-U_N)||\cdot||\nabla(U_N-u_N)||+||\bm{b}\cdot\nabla(u-U_N)||\cdot||U_N-u_N||\\\notag
	&\phantom{\frac12[}+||c(u-U_N)||\cdot||U_N-u_N||]\\\notag
	\lesssim&C_{||\bm{A}||_\infty,\theta}||\nabla(u-U_N)||^2+2\theta||\nabla(U_N-u_N)||^2+||\bm{b}\cdot\nabla(u-U_N)||^2+||c(u-U_N)||^2\\
	&+||U_N-u_N||^2.
\end{align}
On the right-hand side of \eqref{V1}, the third and forth terms are to be estimated.
\begin{align}\label{thm2_eq1}\notag
	||c(u-U_N)||^2
	&\overset{\bm{(C_6)}}\lesssim||(1+\bx^{2\bg})(u-U_N)||^2
	\lesssim||u-U_N||^2+||\bx^{2\bg}(u-U_N)||^2\\
	&\lesssim ||u-U_N||^2+\left(\prod_{i=1}^d\a_i^{-4\g_i}\right)|\bg|_{\textup{mix}}\cdot||u-U_N||^2_{\W_{\ba,\bb}^{|\bg|_1}(\Rd)},
\end{align}
by Lemma \ref{lemma}. Similarly, from $\bm{(C_6)}$ again, we deduce that 
\begin{align}\label{thm2_eq2}\notag
	\left|\left|\bm{b}\cdot\nabla(u-U_N)\right|\right|^2	
		\leq&\sum_{i=1}^d\left|\left|b_i(\bx)\p_{x_i}(u-U_N)\right|\right|^2
		\lesssim\sum_{i=1}^d\left|\left|(1+\bx^{2\bd})\p_{x_i}(u-U_N)\right|\right|^2\\\notag
	\leq&\sum_{i=1}^d\left|\left|\p_{x_i}(u-U_N)\right|\right|^2+\sum_{i=1}^d\left|\left|\bx^{2\bd}\p_{x_i}(u-U_N)\right|\right|^2\\\notag
	\lesssim&||u-U_N||^2_{\W_{\ba,\bb}^1(\Rd)}+\sum_{i=1}^d\left(\prod_{i=1}^d\a_i^{-4\delta_i}\right)|\bd+\bm{e}_i|_{\textup{mix}}\cdot||u-U_N||^2_{\W_{\ba,\bb}^{|\bd|_1+1}(\Rd)}\\
	\lesssim&||u-U_N||^2_{\W_{\ba,\bb}^1(\Rd)}+d\left(\prod_{i=1}^d\a_i^{-4\delta_i}\right)|\bd+1|_{\textup{mix}}\cdot||u-U_N||^2_{\W_{\ba,\bb}^{|\bd|_1+1}(\Rd)}.
\end{align}
Combine \eqref{thm2_eq0}-\eqref{V1}, we have
\begin{align*}
	\p_t||u_N-U_N||^2&\lesssim||\nabla(u-U_N)||^2+||\bm{b}\cdot\nabla(u-U_N)||^2+||c(u-U_N)||^2+C||u_N-U_N||^2\\
	\overset{\eqref{thm2_eq1},\eqref{thm2_eq2}}\lesssim&||\nabla(u-U_N)||^2+C||u_N-U_N||^2+||u-U_N||^2_{\W_{\ba,\bb}^1(\Rd)}\\
	&+||u-U_N||^2_{\W_{\ba,\bb}^{|\bd|_1+1}(\Rd)}+||u-U_N||^2_{\W_{\ba,\bb}^{|\bg|_1}(\Rd)}\\
	\lesssim& C||u_N-U_N||^2+N^{\max\{|\bg|_1,|\bd|_1+1\}-m}|u|^2_{\K_{\ba,\bb}^m(\Rd)},
\end{align*}
by Corollary \ref{coro2.1} or Corollary \ref{coro2.2}. Hence,
\begin{align*}
	||u_N-U_N||^2(t)\leq& e^{Ct}||u_N-U_N||^2(0)+N^{\max\{|\bg|_1,|\bd|_1+1\}-m}e^{Ct}\int_0^te^{-Cs}|u|^2_{\K_{\ba,\bb}^m(\Rd)}(s)ds\\
	\leq& N^{\max\{|\bg|_1,|\bd|_1+1\}-m}\int_0^te^{C(t-s)}|u|^2_{\K_{\ba,\bb}^m(\Rd)}(s)ds.
\end{align*}
Therefore,
\begin{align*}
	||u-u_N||^2(t)\leq&||u-U_N||^2(t)+||u_N-U_N||^2(t)\\
		\lesssim&N^{1-m}|u|^2_{\K_{\ba,\bb}^m(\Rd)}(t)+N^{\max\{|\bg|_1,|\bd|_1+1\}-m}\int_0^te^{C(t-s)}|u|^2_{\K_{\ba,\bb}^m(\Rd)}(s)ds\\
		\lesssim&N^{\max\{|\bg|_1,|\bd|_1+1\}-m}\int_0^T|u|^2_{\K_{\ba,\bb}^m(\Rd)}(s)ds.
\end{align*}
The desired result is obtained.
\end{proof}

\section{Numerical results}

\setcounter{equation}{0}

\subsection{HC approximations with Hermite functions}

In Figure \ref{fig-d2_index}, we display the indices of RHC and OHC (with $\gamma=0.5$) in dimension 2 with $N=31$. It is clear to see that the indices of OHC is a subset of RHC. Furthermore, we list the number of indices of $N=31$ with dimension ranging from 2 to 5.
\smallskip
\begin{center}
\begin{tabular}{|c|c|c|c|c|}
\hline	
	dim&2&3&4&5\\
\hline	
	\# of indices in RHC&176&712&2485&7922\\
\hline
	\# of indices in OHC ($\gamma=0.5$)&136&440&1264&3392\\
\hline
\end{tabular}
\end{center}
\smallskip
\begin{figure}
  {\centering \includegraphics[trim = 15mm 90mm 0mm 90mm, clip, scale=0.8]{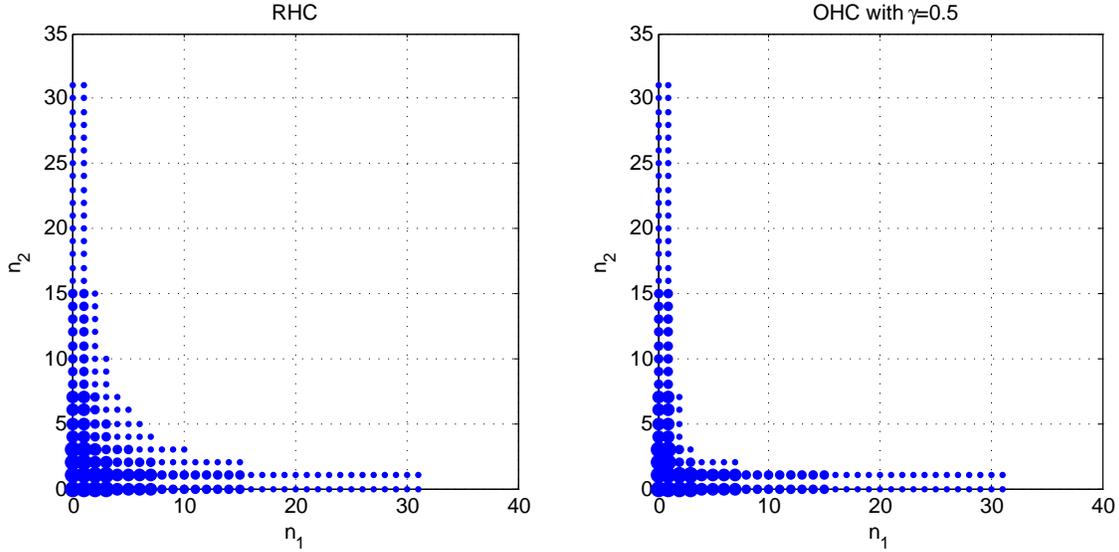}}
	\caption{For $d=2$, $N=31$. Left: the index set $\bO_N$ of RHC. Right: the index set $\bO_{N,\g}$ of OHC with $\gamma=0.5$. }
\label{fig-d2_index}
\end{figure}

It is well-known that the abscissas of Hermite polynomials are non-nested, except the origin. It will lead more number of points than those nested quadrature, such as Chebyshev polynomials. However, the number is still dramatically reduced, compared to the full grids. We list the abscissas of RHC, OHC and full grid of $N=31$ with the dimension ranging from $2$ to $4$.
\smallskip
\begin{center}
\begin{tabular}{|c|c|c|c|}
\hline
	dim&2&3&4\\
\hline
	\# of abscissas in OHC ($\g=0.5$)&108&3348&28944\\ 
\hline
	\# of abscissas in RHC&	298&	6612&82704\\
\hline
	\# of abscissas in full grid&	961&	29791&	923521\\
\hline
\end{tabular}
\end{center}
\smallskip
It is clear that the abscissa in RHC/OHC is much fewer than thoses in the full grid.

\subsection{HSM with sparse grid}

Although the HC approximation is theoretically feasible, it is not suitable for practical implementations, due to the unclarity ``combining effecting" of the product rules, i.e. how to determine the weights from different combinations of 1-D Gauss-Hermite quadrature. Thus, in this subsection, we stick to the Smolyak's algorithm \cite{S} to test the accuracy of high-dimensional HSM applying to linear parabolic PDE.

Let us recall that the Smolyak's algorithm is given
\begin{align*}
	\mathcal{I}(L,d)=\sum_{L-d+1\leq|\bi|_1\leq L}(-1)^{L-|\bi|_1}
	\dbinom{d-1}{L-|\bi|_1} 
	(\mathcal{U}^{i_1}\otimes\cdots\otimes\mathcal{U}^{i_d}),
\end{align*}
where $\mathcal{U}^i$ is an indexed family of 1D quadrature, $\bm{i}$ is the 1D level, $\bi=(i_1,\cdots,i_d)$ is the level vector, $L$ is the max level. The sparse grid is formed by weighted combinations of those product rules whose product level $|\bi|_1$ falls between $L-d+1$ and $L$.

\begin{figure}
  {\centering \includegraphics[trim=0mm 90mm 0mm 90mm, clip, scale=0.7]{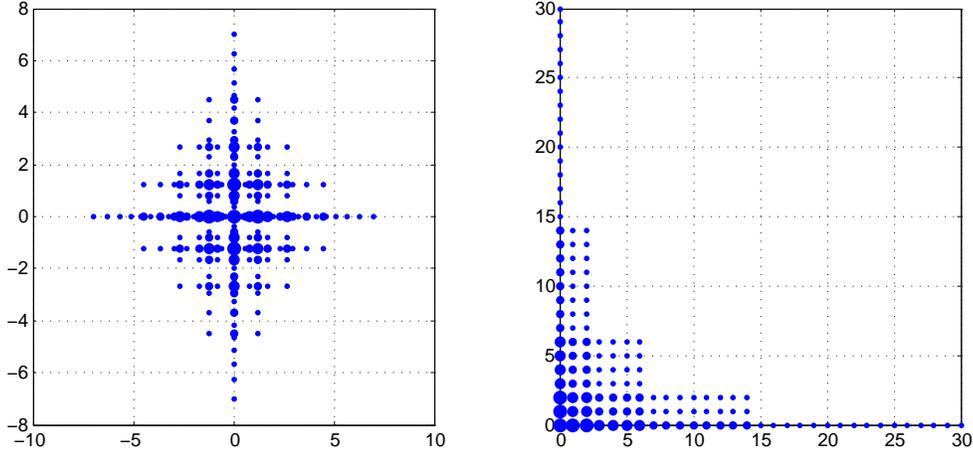}}
	\caption{ In $d=2$, level $L$ ranging from 2 to 4. Left: the abscissas of Hermite functions.  Right: the indices in the index set. The larger the dot is, the lower level it belongs to.}
\label{fig-d2_grid}
\end{figure}

In Figure \ref{fig-d2_grid}, we display the abscissas of the Hermite functions and the index set with level $L$ ranging from $2$ to $4$ in $d=2$.

Let us test the accuracy with the following linear parabolic PDE
\begin{align*}
	\left\{\begin{aligned}
		\p_tu&=\triangle u-\sum_{i=1}^d x_i^2u+f(\bx,t)\\
		u(\bx,0)&=\left(\sum_{i=1}^dx_i\right)e^{-\frac12(x_1^2+\cdots+x_d^2)}
	\end{aligned}\right.,
\end{align*}
where $\triangle$ is the Laplacian operator, 
\begin{align*}
	f(\bx,t)=\left[\cos{t}+d\sin{t}+(d+2)\sum_{i=1}^dx_i \right]e^{-\frac12(x_1^2+\cdots x_d^2)}.
\end{align*}
By direct computations, the exact solution to this PDE is 
\begin{align*}	u(\bx,t)=\left(\sum_{i=1}^dx_i+\sin{t}\right)e^{-\frac12(x_1^2+\cdots+x_d^2)}.
\end{align*}
It is known from \cite{LY} that the best scaling factor is $\ba=\bm{1}$ in this case, since the first two Hermite functions will resolve the exact solution perfectly only with the round-off errors (around $10^{-16}$ on my computer). To make the convergence rate observable with respect to the level $L$, we shall choose the scaling factor $\ba$ to be $1.01\times\bm{1}$. 

The corresponding spectral scheme (cf. \eqref{HSM}, \eqref{bilinear form}) is as follows:
\begin{align}\label{example_HSM}
	\left\{\begin{aligned}
		\langle\p_tu_N(t),\varphi\rangle&=-\langle\nabla u_N,\nabla\varphi\rangle-\sum_{i=1}^d\langle x_i^2u_N,\varphi\rangle+\langle f,\varphi\rangle\\
		u_N(0)&=P_Nu_0,
	\end{aligned}\right.
\end{align}
for all $\varphi\in X_N$. Here, we choose $X_N=X_N^{\a,\b}=\textup{span}\{\H_{\bn}^{\ba,\bb}:\, \bO_N\ \textup{from\ Smolyak}\}$. Thus, we can write the numerical solution as
\begin{align*}
	u_N(\bx,t)=\sum_{\bn\in\bO_N}a_{\bn}(t)\H_{\bn}^{\ba,\bb}(\bx).
\end{align*}
Taking $\varphi(\bx)=\H_{\bn}^{\ba,\bb}(\bx)$ in \eqref{example_HSM}. Due to \eqref{derivative}, \eqref{recurrence for RT hermite function} and \eqref{Hermite representation}, we arrive at an ODE
\begin{align}\label{ODE}
	\left\{\begin{aligned}
		\p_ta_{\bn} &= Aa_{\bn}+\hat{f}_{\bn}\\
		a_{\bn}(0)&=\left(\hat{u}_0\right)_{\bn},
	\end{aligned}\right.
\end{align} 
where $\hat{f}_{\bn}$ (resp. $\left(\hat{u}_0\right)_{\bn}$) is the Hermite coefficients of $f$ (resp. $u_0$) and the matrix $A$ comes from the Laplacian operator and the potential. We display the nonzero entries of the matrix $A$ for dimension $3$ and $4$ with level$=4$ in Figure \ref{fig-A_d4_l4}.

\begin{figure}
		{\centering \includegraphics[trim = 35mm 105mm 35mm 105mm, clip, scale=1.1]{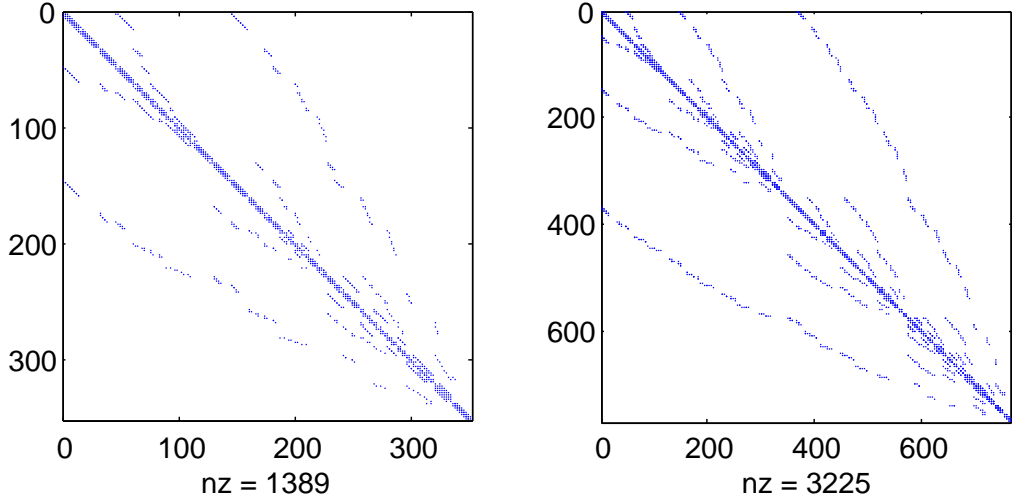}}
	\caption{The nonzero entries in the matrix $A$ (cf. \eqref{ODE}) are displayed with level$=4$. Left: $d=3$, Right: $d=4$.}
\label{fig-A_d4_l4}
\end{figure}

We adopt the central difference scheme to solve \eqref{ODE} with $T=0.1$, $dt=10^{-5}$, $\ba=1.01\times\bm{1}$ and $\bb=\bm{0}$. Figure \ref{fig-l2error} shows the $L^2-$norm of $\left(u_N-u_{\textup{exact}}\right)$ with respect to the level in dimension ranging from $2$ to $4$. It is exactly what we expect that in the semi-log plot the error goes down almost along a straight line, which indicates that the convergence rate is nearly exponential decaying. However, with the dimension grows, the error becomes slightly larger. It reveals that the convergence rate still slightly deteriorates with the dimension increasing.

\begin{figure}[htb]
  \centering
  \begin{minipage}[c]{0.38\textwidth}
    \centering
    \begin{tabular}{|c|c|c|c|}
\hline	
	level/dim&	2&	3&	4\\
\hline
	2&	2.24E-03&7.99E-03&	n/a\\
\hline
	3&3.99E-04&544E-03&2.10E-02\\
\hline
	4&	4.75E-06&1.93E-03&1.14E-02\\
\hline
	5&	2.72E-07&2.66E-04&4.11E-03\\
\hline
\end{tabular}
  \end{minipage}
  \begin{minipage}[c]{0.58\textwidth}
    \includegraphics[trim = 10mm 80mm 10mm 90mm, clip, scale=.55]{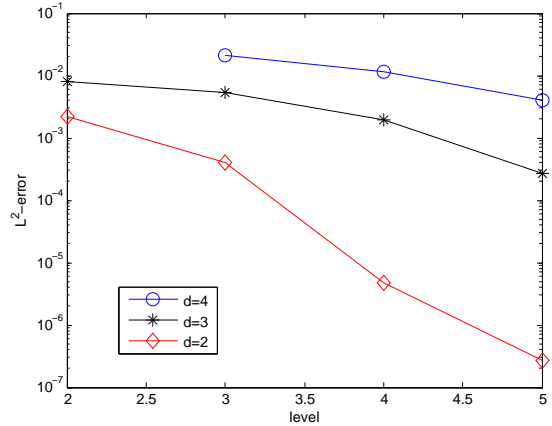}
  \end{minipage}
\caption{The $L^2$ error of $u_N$ with respect to the level in $d=2,3$ and $4$ is drawn.}
\label{fig-l2error}
 \end{figure}

\section{Conclusion}

In this paper, we consider the HC approximation with generalized Hermite functions. We established the error estimate in the appropriate space for both RHC and OHC. Furthermore, the error estimate of the dimensional adaptive approximation is obtained with respect to the dependence of dimension. As an application, the HC approximation is applied to high-dimensional linear parabolic PDEs.  We investigated the convergence rate of the Galerkin-type HSM in the suitable weighted Korobov space. It is shown to be exponential convergent. Moreover, the numerical simulation supports our theoretical proofs.

\appendix
\numberwithin{equation}{section}
\numberwithin{theorem}{section}
 
\section{Proof of Theorem \ref{projection on full grid}}

\begin{proof}[Proof of Theorem \ref{projection on full grid}]
Let $\bO_N^c=\{\bn\in\Nd:\ |\bn|_\infty>N\}$. By \eqref{PN in frequency}, \eqref{sobolev seminorm} and \eqref{seminorm in frequency},
\begin{align}\label{thm2.1_eq1}
	\left|P_N^{\ba,\bb}u-u\right|^2_{\W_{\ba,\bb}^l(\Rd)}
		=\sum_{j=1}^d\sum_{\bn\in\bO_N^c}\mu_{n_j,l}\left|\hat{u}_{\bn}^{\ba,\bb}\right|^2.
\end{align}
For any $1\leq j\leq d$, 
\begin{align}\label{thm2.1_eq2}
	\sum_{\bn\in\bO_N^c}\mu_{n_j,l}\left|\hat{u}_{\bn}^{\ba,\bb}\right|^2
		=\sum_{\bn\in\bm{\Lambda}_N^{1,j}}\mu_{n_j,l}\left|\hat{u}_{\bn}^{\ba,\bb}\right|^2
			+\sum_{\bn\in\bm{\Lambda}_N^{2,j}}\mu_{n_j,l}\left|\hat{u}_{\bn}^{\ba,\bb}\right|^2:=\rom{1}_1+\rom{1}_2,
\end{align}
where $\bm{\Lambda}_N^{1,j}=\{\bn\in\bO_N^c:\ n_j>N\}$ and $\bm{\Lambda}_N^{2,j}=\{\bn\in\bO_N^c:\ n_j\leq N\}$. For $\rom{1}_1$:
\begin{align}\label{thm2.1_I1}
	\rom{1}_1\leq\max_{\bn\in\bm{\Lambda}_N^{1,j}}\left\{\frac{\mu_{n_j,l}}{\mu_{n_j,m}}\right\}\sum_{\bn\in\bm{\Lambda}_N^{1,j}}\mu_{n_j,m}\left|\hat{u}_{\bn}^{\ba,\bb}\right|^2
		\lesssim |\ba|_\infty^{2(l-m)}N^{l-m}|u|^2_{\W_{\ba,\bb}^m(\Rd)}.
\end{align}
In fact,
\begin{align*}
	\max_{\bn\in\bm{\Lambda}_N^{1,j}}\left\{\frac{\mu_{n_j,l}}{\mu_{n_j,m}}\right\}
		=&\max_{\bn\in\bm{\Lambda}_N^{1,j}}\left\{\frac{2^{l-m}\a_j^{2(l-m)}}{(n_j-l)(n_j-l-1)\cdots(n_j-m+1)}\right\}\\
		\leq&2^{l-m}|\ba|_\infty^{2(l-m)}(N-m+1)^{l-m}.
\end{align*}
For $\rom{1}_2$, if $\bn\in\bm{\Lambda}_N^{2,j}$, then there exists some $k\neq j$, such that $n_k>N$.
\begin{align}\label{thm2.1_I2}
	\rom{1}_2
		\leq\max_{\bn\in\bm{\Lambda}_N^{2,j}}\left\{\frac{\mu_{n_j,l}}{\mu_{n_k,m}}\right\}\sum_{\bn\in\bm{\Lambda}_N^{2,j}}\mu_{n_k,m}\left|\hat{u}_{\bn}^{\ba,\bb}\right|^2
		\lesssim|\ba|_\infty^{2l}\left|\frac1{\ba}\right|_\infty^{2m}N^{l-m-2}|u|^2_{\W_{\ba,\bb}^m(\Rd)},
\end{align}
since
\begin{align*}
	\max_{\bn\in\bm{\Lambda}_N^{2,j}}\left\{\frac{\mu_{n_j,l}}{\mu_{n_k,m}}\right\}
		=&\max_{\bn\in\bm{\Lambda}_N^{2,j}}\left\{2^{l-m}\frac{\a_j^{2l}}{\a_k^{2m}}\frac{\frac{n_j!}{(n_j-l)!}}{\frac{n_k!}{(n_k-m)!}}\right\}
		\leq2^{l-m}|\ba|_\infty^{2l}\left|\frac1{\ba}\right|_\infty^{2m}\frac{\frac{N!}{(N-l)!}}{\frac{(N+1)!}{(N+1-m)!}}\\
		=&2^{l-m}|\ba|_\infty^{2l}\left|\frac1{\ba}\right|_\infty^{2m}\frac1{N+1}\frac1{(N-l)(N-l-1)\cdots(N-m)}\\
		\leq&2^{l-m}|\ba|_\infty^{2l}\left|\frac1{\ba}\right|_\infty^{2m}(N-m)^{l-m-2}.
\end{align*}
Combine \eqref{thm2.1_eq1}-\eqref{thm2.1_I2}, we obtain the result. Furthermore, the mix derivatives of the order equal to or less than $m$ can be bounded by the seminorm $|u|_{\W_{\ba,\bb}^m(\Rd)}$.
\end{proof}

\section{Dimensional adaptive approximation}

The standard sparse grids are isotropic, treating all the dimensions equally. Many problems vary rapidly in only some dimensions, remaining less variable in other dimensions. In some situations, the highly changing dimensions can be recognized a priori.  Consequently it is advantageous to treat them accordingly. Without loss of generality, we assume the first $d_1$ dimensions are rapidly variable ones, and we wish to adopt the full grid. Meanwhile, the OHC approximation will be used in the rest $d_2:=d-d_1$ dimensions.

Let us denote that $\bn:=\bn_1\bigoplus\bn_2$, where $\bn_1=(n_1,\cdots,n_{d_1})$ and $\bn_2=(n_{d_1+1},\cdots,n_d)$. The index set is 
\begin{align}\label{dim adaptive indices}
	\bO_{N_1,N_2,\g}:=\left\{\bn\in\Nd:\ |\bn_1|_\infty\leq N_1,\ |\bn_2|_{\textup{mix}}|\bn_2|_\infty^{-\g}\leq N_2^{1-\g}\right\},\quad\forall\, -\infty<\g<1.
\end{align}
The complement of the index set is 
\begin{align*}
	\bO_{N_1,N_2,\g}^c:=\left\{\bn\in\Nd:\ |\bn_1|_\infty>N_1\quad\textup{or}\quad|\bn_2|_{\textup{mix}}|\bn_2|_\infty^{-\g}>N_2^{1-\g}\right\},
\end{align*}
and the $\bk-$complement of $\bO_{N_1,N_2,\g}$ is defined similarly as in \eqref{k complement of mix}:
\begin{align*}
	\bO_{N_1,N_2,\g,\bk}^c:=\left\{\bn\in\bO_{N_1,N_2,\g}^c:\ \bn\geq\bk\right\},\ \forall\,\bk\in\Nd.
\end{align*}
And the subspace $X_{N_1,N_2}^{\ba,\bb}$ is defined accordingly, i.e.,
\begin{align}\label{dim adaptive HC}
	X_{N_1,N_2}^{\ba,\bb}:=\textup{span}\{\bH_{\bn}^{\ba,\bb}(\bx):\ \bn\in\bO_{N_1,N_2,\g}\},
\end{align}
so defined the projection operator $P_{N_1,N_2,\g}^{\ba,\bb}:\ L^2(\Rd)\rightarrow X_{N_1,N_2}^{\ba,\bb}$.
\begin{theorem}\label{B.1}
	For any $u\in\K_{\ba,\bb}^m(\Rd)$, for $0<l\leq m$, we have
	\begin{align*}
		\left|P_{N_1,N_2,\g}^{\ba,\bb}u-u\right|_{W^l_{\ba,\bb}(\Rd)}\lesssim|\ba|_\infty^{l-m}\left(N_1^{l-m}+N_2^{\frac{1-\g}{d-d_1-\g}(l-m)}\right)^{\frac12}|u|_{\K_{\ba,\bb}^m(\Rd)}.
	\end{align*}
\end{theorem}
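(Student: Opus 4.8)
The plan is to run the frequency-space ratio argument used in Theorem \ref{projection on full grid} and Theorem \ref{thm-OHC} simultaneously, splitting the complement $\bO_{N_1,N_2,\g}^c$ according to which of the two blocks is responsible for a given index lying outside the admissible set. First I would write, via \eqref{seminorm in frequency},
\[
	\left|P_{N_1,N_2,\g}^{\ba,\bb}u-u\right|^2_{\W^l_{\ba,\bb}(\Rd)}
	=\sum_{j=1}^d\sum_{\bn\in\bO_{N_1,N_2,\g}^c}\mu_{n_j,l}\left|\hat u_{\bn}^{\ba,\bb}\right|^2,
\]
and partition $\bO_{N_1,N_2,\g}^c=B_1\sqcup B_2$ with $B_1=\{\bn\in\bO_{N_1,N_2,\g}^c:\ |\bn_1|_\infty>N_1\}$ the indices violating the full-grid constraint on the first $d_1$ coordinates, and $B_2=\{\bn:\ |\bn_1|_\infty\le N_1,\ |\bn_2|_{\textup{mix}}|\bn_2|_\infty^{-\g}>N_2^{1-\g}\}$ those respecting the full grid but violating OHC on the remaining $d_2:=d-d_1$ coordinates.

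The core step is a pointwise comparison of each $\mu_{n_j,l}$ against a single term of the Korobov seminorm, driven by a \emph{witness} coordinate $j^\ast$ whose index exceeds the relevant threshold. When $j=j^\ast$ I compare with $\bk=m\bm{e}_{j^\ast}$ and invoke \eqref{thm2.2_eq1} to get $\mu_{n_j,l}/\mu_{n_j,m}\lesssim n_{j^\ast}^{\,l-m}$; when $j\neq j^\ast$ I instead use the two-factor index $\bk=l\bm{e}_j+m\bm{e}_{j^\ast}$, which still satisfies $|\bk|_\infty=m$, so that $\bm{\mu}_{\bn,\bk}=\mu_{n_j,l}\mu_{n_{j^\ast},m}$ and the ratio collapses to $\mu_{n_j,l}/\bm{\mu}_{\bn,\bk}=1/\mu_{n_{j^\ast},m}\lesssim n_{j^\ast}^{-m}$. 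In both cases $|\bk|_\infty=m$ guarantees $\sum_{\bn}\bm{\mu}_{\bn,\bk}|\hat u_{\bn}^{\ba,\bb}|^2=\|\bD_{\bx}^{\bk}u\|^2\le|u|^2_{\K_{\ba,\bb}^m(\Rd)}$, and the $\ba$- and dyadic powers are collected exactly as in Theorems \ref{projection on full grid} and \ref{thm-OHC} into the prefactor $|\ba|_\infty^{l-m}$. Refining the partition of $B_1$ and $B_2$ by the value of $j^\ast$ keeps $\bk$ fixed on each piece at the cost of a $d$-dependent constant.

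On $B_1$ I choose $j^\ast\le d_1$ with $n_{j^\ast}>N_1$ (which exists by definition), so both cases give $n_{j^\ast}^{\,l-m}<N_1^{l-m}$ and $n_{j^\ast}^{-m}<N_1^{-m}\le N_1^{l-m}$ (using $l\ge0$); hence the $B_1$ part is $\lesssim|\ba|_\infty^{2(l-m)}N_1^{l-m}|u|^2_{\K^m_{\ba,\bb}(\Rd)}$. On $B_2$ I first observe, as in \eqref{thm2.3_eq7} but applied to the $d_2$-dimensional tail, that $|\bn_2|_\infty^{d_2-\g}\ge|\bn_2|_{\textup{mix}}|\bn_2|_\infty^{-\g}>N_2^{1-\g}$, so $|\bn_2|_\infty>N_2^{(1-\g)/(d_2-\g)}$; taking $j^\ast>d_1$ with $n_{j^\ast}=|\bn_2|_\infty$ and inserting this threshold yields a $B_2$ part $\lesssim|\ba|_\infty^{2(l-m)}N_2^{\frac{(1-\g)(l-m)}{d_2-\g}}|u|^2_{\K^m_{\ba,\bb}(\Rd)}$, where the mismatched terms use $N_2^{-m(1-\g)/(d_2-\g)}\le N_2^{(1-\g)(l-m)/(d_2-\g)}$. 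Summing over $j$, adding the two parts, and taking square roots, with $d_2=d-d_1$, gives the claim.

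The step I expect to be the main obstacle is the cross terms, where the seminorm direction $j$ lies in one block while the violated constraint belongs to the other; a naive comparison $\mu_{n_j,l}/\mu_{n_{j^\ast},m}$ then leaves an uncontrolled factor $n_j^{\,l}$ (bounded only by $N_1^{l}$) in the numerator. The two-factor index $\bk=l\bm{e}_j+m\bm{e}_{j^\ast}$ is what resolves this: because $l<m$ it still has $|\bk|_\infty=m$ and therefore lives inside the $\K^m$-seminorm, absorbing $\mu_{n_j,l}$ into the right-hand side and leaving only the clean decay $1/\mu_{n_{j^\ast},m}$. Checking that this faster decay is dominated by the advertised rate, via the exponent inequalities above, then completes the proof.
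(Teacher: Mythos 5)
Your proposal is correct, and its skeleton coincides with the paper's own proof of Theorem \ref{B.1}: the same representation via \eqref{seminorm in frequency}, the same splitting of $\bO_{N_1,N_2,\g}^c$ into your $B_1,B_2$ (the paper's $\Gamma_1,\Gamma_2$), the same deduction $|\bn_2|_\infty>N_2^{(1-\g)/(d-d_1-\g)}$ (the paper's \eqref{thm2.4_eq1}), and the same witness-coordinate ratio comparison against terms controlled by $|u|_{\K_{\ba,\bb}^m(\Rd)}$. Where you genuinely deviate is the cross terms, and the two treatments are worth contrasting. The paper sub-partitions each $\Gamma_i$ by the size of $n_j$ itself (its $\Lambda^{1,j}$ versus $\Lambda^{2,j}$), and in the mismatched piece compares $\mu_{n_j,l}$ against the single axis index $m\bm{e}_{j_0}$; the "uncontrolled $n_j^{\,l}$" you identified is tamed there precisely by the constraint $n_j\le N_1$ built into $\Lambda^{2,j}$, which even yields the slightly stronger decay $(N_1-m)^{l-m-2}$ (cf.\ \eqref{thm2.1_I2}), at the price of constants of the form $|\ba|_\infty^{2l}\left|\frac1{\ba}\right|_\infty^{2m}$. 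Your two-factor index $\bk=l\bm{e}_j+m\bm{e}_{j^\ast}$ (legitimate since $l\le m$ gives $|\bk|_\infty=m$, hence $\left|\left|\bD_{\bx}^{\bk}u\right|\right|\le|u|_{\K_{\ba,\bb}^m(\Rd)}$ by \eqref{Koborov seminorm}) absorbs $\mu_{n_j,l}$ into the seminorm and needs no size constraint on $n_j$ at all, so partitioning by the identity of the witness suffices; the price is the weaker (but still sufficient, since $N_1^{-m}\le N_1^{l-m}$) decay, and the use of mixed Korobov directions rather than only the axis ones. Both routes deliver the advertised rate; yours is marginally cleaner in that one partition replaces two and all $j\neq j^\ast$ are treated uniformly, while the paper's stays within the smaller family of comparison indices $\{m\bm{e}_{j_0}\}$ and gets a better exponent on the cross piece. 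Two small points to make explicit: the matched-case bound $\mu_{n_{j^\ast},l}/\mu_{n_{j^\ast},m}\lesssim n_{j^\ast}^{l-m}$ requires the thresholds $N_1$ and $N_2^{(1-\g)/(d-d_1-\g)}$ to exceed $m$ (an $N\gg1$ assumption the paper also implicitly makes), and, exactly as in the paper, the prefactor $|\ba|_\infty^{l-m}$ is only achieved up to an $\ba$-dependent constant hidden in $\lesssim$.
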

\begin{proof}
Before we proceed to prove, we divide the index set $\bO_{N_1,N_2,\g}^c$ into two subsets:
\begin{align*}
	\Gamma_{1}:=&\{\bn\in\bO_{N_1,N_2,\g}^c:\ |\bn_1|_\infty>N_1\},\\
	\Gamma_{2}:=&\{\bn\in\bO_{N_1,N_2,\g}^c:\ |\bn_1|_\infty\leq N_1\ \textup{and}\ |\bn_2|_{\textup{mix}}|\bn_2|_\infty^{-\g}>N_2^{1-\g}\}.
\end{align*}
Our proof mainly follows the proof of Theorem \ref{projection on full grid}:
\begin{align}\label{thm2.4_eq0}\notag
	\left|P_{N_1,N_2,\g}^{\ba,\bb}u-u\right|^2_{\W_{\ba,\bb}^l(\Rd)}
		\overset{ \eqref{seminorm in frequency}}=&\sum_{j=1}^d\sum_{\bn\in\bO_{N_1,N_2,\g}^c}\mu_{n_j,l}\left|\hat{u}_{\bn}^{\ba,\bb}\right|^2\\
		=&\sum_{j=1}^d\sum_{\bn\in\Gamma_1}\mu_{n_j,l}\left|\hat{u}_{\bn}^{\ba,\bb}\right|^2
		+\sum_{j=1}^d\sum_{\bn\in\Gamma_2}\mu_{n_j,l}\left|\hat{u}_{\bn}^{\ba,\bb}\right|^2		
		:=\rom{4}_1+\rom{4}_2.
\end{align}
For $\rom{4}_1$, for any $1\leq j\leq d$, 
\begin{align*}
	\rom{4}_1
=\sum_{\bn\in\Lambda_{N_1}^{1,j}}\mu_{n_j,l}\left|\hat{u}_{\bn}^{\ba,\bb}\right|^2
+\sum_{\bn\in\Lambda_{N_1}^{2,j}}\mu_{n_j,l}\left|\hat{u}_{\bn}^{\ba,\bb}\right|^2
	:=\rom{4}_{1,1}+\rom{4}_{1,2},	
\end{align*}
where 
\begin{align*}
	\Lambda_{N_1}^{1,j}:=\{\bn\in\Gamma_1:\ n_j>N_1\},\quad
	\Lambda_{N_1}^{2,j}:=\{\bn\in\Gamma_1:\ n_j\leq N_1\}.
\end{align*}
For $\rom{4}_{1,1}$:
\begin{align}\label{thm2.4_IV11}\notag
	\rom{4}_{1,1}
	\leq&\max_{\bn\in\Lambda_{N_1}^{1,j}}\left\{\frac{\mu_{n_j,l}}{\mu_{n_j,m}}\right\}\sum_{\bn\in\Lambda_{N_1}^{1,j}}\mu_{n_j,m}\left|\hat{u}_{\bn}^{\ba,\bb}\right|^2\\
	\overset{\eqref{thm2.1_I1}}\leq& 2^{l-m}|\ba|_\infty^{2(l-m)}(N_1-m+1)^{l-m}|u|_{\K_{\ba,\bb}^m(\Rd)}^2.
\end{align}
For $\rom{4}_{1,2}$, since $\bn\in\Gamma_1$, there exists some $j_0\in\{1,\cdots,d_1\}$ such that $n_{j_0}>N_1$.
\begin{align}\label{thm2.4_IV12}\notag
	\rom{4}_{1,2}
	\leq&\max_{\bn\in\Lambda_{N_1}^{2,j}}\left\{\frac{\mu_{n_j,l}}{\mu_{n_{j_0},m}}\right\}\sum_{\bn\in\Lambda_{N_1}^{2,j}}\mu_{n_{j_0},m}\left|\hat{u}_{\bn}^{\ba,\bb}\right|^2\\
	\overset{\eqref{thm2.1_I2}}\leq& 2^{l-m}|\ba|_\infty^{2l}\left|\frac1{\ba}\right|_\infty^{2m}(N_1-m)^{l-m-2}|u|_{\K_{\ba,\bb}^m(\Rd)}.
\end{align}
Hence, combine \eqref{thm2.4_IV11} and \eqref{thm2.4_IV12}, we have
\begin{align}\label{thm2.4_IV1}
	\rom{4}_1\lesssim|\ba|_\infty^{2(l-m)}N_1^{l-m}|u|_{\K_{\ba,\bb}^m(\Rd)}^2.
\end{align}
For $\rom{4}_2$, let us deduce as in \eqref{thm2.3_eq7}:
\begin{align}\label{thm2.4_eq1}
	|\bn_2|_{\textup{mix}}|\bn_2|_\infty^{-\g}>N_2^{1-\g}
	\Rightarrow |\bn_2|_\infty>N_2^{\frac{1-\g}{d-d_1-\g}}.
\end{align}
With the similar argument for $\rom{4}_1$, we write
\begin{align*}
	\rom{4}_2=\sum_{\bn\in\Lambda_{N_2}^{1,j}}\mu_{n_j,l}\left|\hat{u}_{\bn}^{\ba,\bb}\right|^2
+\sum_{\bn\in\Lambda_{N_2}^{2,j}}\mu_{n_j,l}\left|\hat{u}_{\bn}^{\ba,\bb}\right|^2
	:=\rom{4}_{2,1}+\rom{4}_{2,2},
\end{align*}
where 
\begin{align*}
	\Lambda_{N_2}^{1,j}:=\left\{\bn\in\Gamma_2:\ n_j>N_2^{\frac{1-\g}{d-d_1-\g}}\right\},\quad
	\Lambda_{N_2}^{2,j}:=\left\{\bn\in\Gamma_2:\ n_j\leq N_2^{\frac{1-\g}{d-d_1-\g}}\right\}.
\end{align*}
Thus,
\begin{align}\label{thm2.4_IV21}\notag
	\rom{4}_{2,1}
	\leq&\max_{\bn\in\Lambda_{N_2}^{1,j}}\left\{\frac{\mu_{n_j,l}}{\mu_{n_j,m}}\right\}\sum_{\bn\in\Lambda_{N_2}^{1,j}}\mu_{n_j,m}\left|\hat{u}_{\bn}^{\ba,\bb}\right|^2\\
	\leq&2^{l-m}|\ba|_\infty^{2(l-m)}(N_2^{\frac{1-\g}{d-d_1-\g}}-m+1)^{l-m}|u|_{\K_{\ba,\bb}^m(\Rd)}^2,
\end{align}
and by \eqref{thm2.4_eq1}, there exists some $j_0\in\{d_1+1,\cdots,d\}$ such that $n_{j_0}>N_2^{\frac{1-\g}{d-d_1-\g}}$, then
\begin{align}\label{thm2.4_IV22}\notag
	\rom{4}_{2,2}
	\leq&\max_{\bn\in\Lambda_{N_2}^{2,j}}\left\{\frac{\mu_{n_j,l}}{\mu_{n_{j_0},m}}\right\}\sum_{\bn\in\Lambda_{N_2}^{2,j}}\mu_{n_{j_0},m}\left|\hat{u}_{\bn}^{\ba,\bb}\right|^2\\
	\leq&2^{l-m}|\ba|_\infty^{2l}\left|\frac1{\ba}\right|_\infty^{2m}\left(\lfloor{N_2^{\frac{1-\g}{d-d_1-\g}}}\rfloor-m\right)^{l-m-2}|u|^2_{\K_{\ba,\bb}^m(\Rd)},
\end{align}
where $\lfloor\cdot\rfloor$ denotes the largest integer smaller or equal to $\cdot$. The estimate of $\rom{4}_2$ follows immediately from \eqref{thm2.4_IV21} and \eqref{thm2.4_IV22}:
\begin{align}\label{thm2.4_IV2}
	\rom{4}_2\lesssim|\ba|_\infty^{2(l-m)}N_2^{\frac{1-\g}{d-d_1-\g}(l-m)}|u|_{\K_{\ba,\bb}^m(\Rd)}^2.
\end{align}
The desired result follows from \eqref{thm2.4_IV1} and \eqref{thm2.4_IV2}.
\end{proof}

\end{document}